\documentclass[10pt]{amsart}

\usepackage{anysize}
%\marginsize{3cm}{2cm}{1cm}{1cm}
\usepackage[utf8x]{inputenc}
\usepackage[main=english,czech]{babel}
\usepackage{amssymb, amsmath, amsthm}
\usepackage{enumitem}
\usepackage{mathtools}
\usepackage{url}
\usepackage[all,cmtip]{xy}
\usepackage[hidelinks,bookmarksnumbered]{hyperref}
\usepackage{cleveref}

\let\cal\mathcal
\def\AA{{\cal A}}

\def\CC{{\cal C}}
\def\DD{{\cal D}}
\def\EE{{\cal E}}
\def\FF{{\cal F}}

\def\II{{\cal I}}

\def\PP{{\cal P}}

\def\SS{{\cal S}}
\def\TT{{\cal T}}

\let\blb\mathbb

\def\bK{{\blb K}}

\def\bZ{{\blb Z}}

\def\bN{{\blb N}}

\def\bZ{{\blb Z}}

\let\frak\mathfrak

\def\fb{\frak{b}}

\title{Preresolving subcategories and derived equivalences}
%\author{Ruben Henrard\\Hasselt University \and Adam-Christiaan van Roosmalen\\Hasselt University}

%    author one information
\author{Ruben Henrard}
\address{Ruben Henrard \\ Universiteit Hasselt \\ Campus Diepenbeek \\ Departement WNI \\ 3590 Diepenbeek \\ Belgium}
\email{Ruben.Henrard@uhasselt.be}
%\thanks{}

%    author two information
\author{Adam-Christiaan van Roosmalen}
\address{Adam-Christiaan van Roosmalen \\ Universiteit Hasselt \\ Campus Diepenbeek \\ Departement WNI \\ 3590 Diepenbeek \\ Belgium}
\email{Adam-Christiaan.vanRoosmalen@UHasselt.be}
%\thanks{}

\subjclass[2020]{Primary: 18E05, 18G10, 18G80}
% 18E10 Abelian, Grothendieck categories
% 18G10 Resolutions; derived functors (category-theoretic aspects)
% 18G80 Derived categories, triangulated categories
\keywords{(Pre)resolving subcategories, (one-sided) exact categories, derived categories}

%\date{\today}

\newtheorem{theorem}{Theorem}[section]
\newtheorem{proposition}[theorem]{Proposition}
\newtheorem{lemma}[theorem]{Lemma}
\newtheorem{corollary}[theorem]{Corollary}

\theoremstyle{definition}
\newtheorem{definition}[theorem]{Definition}
\newtheorem{remark}[theorem]{Remark}
\newtheorem{example}[theorem]{Example}

\newtheorem{notation}[theorem]{Notation}
\newtheorem{convention}[theorem]{Convention}

\DeclareMathOperator{\inflation}{\rightarrowtail}
\DeclareMathOperator{\deflation}{\twoheadrightarrow}

\DeclareMathOperator{\im}{im}
\DeclareMathOperator{\coim}{coim}
\DeclareMathOperator{\coker}{coker}
\DeclareMathOperator{\cone}{cone}

\DeclareMathOperator{\Mor}{Mor}
\DeclareMathOperator{\Proj}{Proj}

\DeclareMathOperator{\Hom}{Hom}

\DeclareMathOperator{\Mod}{Mod}
\DeclareMathOperator{\smod}{mod}

\DeclareMathOperator{\C}{\mathbf{C}}
\DeclareMathOperator{\D}{\mathbf{D}}
\DeclareMathOperator{\K}{\mathbf{K}}
\DeclareMathOperator{\Ac}{\mathbf{Ac}}
\DeclareMathOperator{\Acb}{\mathbf{Ac}^{\mathsf{b}}}
\DeclareMathOperator{\Acm}{\mathbf{Ac}^{-}}

\DeclareMathOperator{\Cm}{\mathbf{C}^-}
\DeclareMathOperator{\Km}{\mathbf{K}^-}
\DeclareMathOperator{\Dm}{\mathbf{D}^-}
\DeclareMathOperator{\Dstar}{\mathbf{D}^*}

\DeclareMathOperator{\Cb}{\mathbf{C}^{\mathsf{b}}}
\DeclareMathOperator{\Db}{\mathbf{D}^{\mathsf{b}}}
\DeclareMathOperator{\Dbinf}{\mathbf{D}_\infty^{\mathsf{b}}}
\DeclareMathOperator{\Kb}{\mathbf{K}^{\mathsf{b}}}

\DeclareMathOperator{\add}{add}

\newcommand{\ex}[1]{#1^{\mathsf{ex}}}

\DeclareMathOperator{\resdim}{res.dim}

% Circlearrow:

\makeatletter
\newcommand{\circlearrow}{}% just in case
\DeclareRobustCommand{\circlearrow}{%
  \mathrel{\vphantom{\rightarrow}\mathpalette\circle@arrow\relax}%
}
\newcommand{\circle@arrow}[2]{%
  \m@th
  \ooalign{%
    \hidewidth$#1\circ\mkern1mu$\hidewidth\cr
    $#1\longrightarrow$\cr}%
}
\makeatother

%------- Workaround for the R3 reference:
\makeatletter
\newcommand{\myitem}[1]{%
\item[#1]\protected@edef\@currentlabel{#1}%
}
\makeatother
%------- End of workaround

\begin{document}

\begin{abstract}
	It is well known that a resolving subcategory $\AA$ of an abelian subcategory $\EE$ induces several derived equivalences: the equivalence $\Dm(\AA)\to \Dm(\EE)$ holds in general and furthermore restricts to equivalence $\Db(\AA)\to \Db(\EE)$ if $\resdim_{\AA}(E)<\infty$ for any object $E\in \EE$. If the category $\EE$ is uniformly bounded, i.e.~$\resdim_{\AA}(\EE)<\infty$, one obtains a derived equivalence $\D(\AA)\to \D(\EE)$.
	
	In this paper, we show that all of the above statements hold for preresolving subcategories of (one-sided) exact categories. By passing to a one-sided language, one can remove the assumption that $\AA\subseteq \EE$ is extension-closed completely from the classical setting, yielding easier criteria and more examples. To illustrate this point, we consider the Isbell category $\II$ and show that $\II\subseteq \mathsf{Ab}$ is preresolving but $\II$ cannot be realized as an extension-closed subcategory of an exact category.
	
	We also consider a criterion given by Keller to produce derived equivalences of fully exact subcategories. We show that this criterion fits into the framework of preresolving subcategories by considering the relative weak idempotent completion of said subcategory.		
\end{abstract}

\maketitle

\tableofcontents

\section{Introduction}

A basic technique in homological algebra is replacing an object in an abelian or exact category with a well-chosen resolution.  For example, if an abelian category $\EE$ has enough projective objects, one can replace any object $E \in \EE$ with a projective resolution $P^\bullet \to E$.  The existence of such resolutions provides a triangle equivalence $\Km(\Proj \EE) \to \Dm(\EE)$.

The notion of a resolving subcategory \cite{AuslanderBridger69} generalizes this setting: a resolving subcategory $\AA$ of an abelian or exact category $\EE$ allows one to take resolutions of all objects in $\EE$ by objects in $\AA$. For example, if $S$ is a ring, then the subcategory $\FF$ of flat $S$-modules is a resolving subcategory of $\Mod S$.  As $\FF$ is extension-closed in $\Mod S$, the category $\FF$ inherits an exact structure (so that the derived category of $\FF$ is defined, see \cite{Neeman90}) and we again find a triangle equivalence $\Dm(\FF) \to \Dm(\Mod S).$

The definition of a preresolving subcategory $\AA$ of $\EE$ is obtained from the definition of a resolving subcategory by removing the assumption that $\AA$ is extension-closed in $\EE$.  Specifically, let $\EE$ be an exact category.  A full additive subcategory $\AA$ of $\EE$ is called \emph{preresolving} if $\AA$ is closed under kernels of deflations (we say that $\AA$ lies \emph{deflation-closed} in $\EE$, see \Cref{Definition:PreResolvingAndFinitelyPreresolving}), and for every $E \in \EE$ there is a deflation $A \deflation E$ for some $A \in \AA$.

Given a preresolving subcategory $\AA \subseteq \EE$, we may attach an $\AA$-resolution dimension to each object $E \in \EE$ as the smallest numbber $n \in \bN \cup \{\infty\}$ such that there is an exact sequence $0 \to A^{-n} \to A^{-n-1} \to \cdots \to A^0 \to E$ with each $A^i \in \AA.$  We write $\resdim_\AA(\EE)$ for the supremum of $\resdim_\AA(E)$, ranging over all $E \in \EE.$

As $\AA$ need not be extension-closed in $\EE$, the category $\AA$ need not inherit an exact structure from $\EE$.  However, using the fact that $\AA$ lies deflation-closed in $\EE$, the subcategory $\AA$ still inherits a rich homological structure from $\EE$: it is a deflation-exact category (see \Cref{Proposition:DeflationClosedInheritsDeflationExactStructure} in the text).  Deflation-exact categories still admit a natural (bounded) derived category \cite{BazzoniCrivei13,HenrardvanRoosmalen19b}, and we obtain the following theorem (see \Cref{Theorem:PreResolvingDerivedEquivalence} in the text).

\begin{theorem}\label{Theorem:Introduction}
	Let $\AA$ be a preresolving subcategory of a (deflation-)exact category $\EE$. 
	\begin{enumerate}
		\item The embedding $\AA\subseteq \EE$ lifts to a triangle equivalence $\Dm(\AA)\to \Dm(\EE)$.
		\item	If $\resdim_{\AA}(E)<\infty$ for all $E\in \EE$, then the embedding lifts to a triangle equivalence $\Db(\AA)\to \Db(\EE)$.
		\item If $\resdim_{\AA}(\EE)<\infty$, then the embedding lifts to a triangle equivalence $\D(\AA)\to \D(\EE)$.
	\end{enumerate}
\end{theorem}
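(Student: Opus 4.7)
The plan is to prove parts (1), (2), (3) in order, building on the fact stated in the excerpt that a deflation-closed subcategory $\AA$ of a deflation-exact category $\EE$ inherits a deflation-exact structure (so $\Dm(\AA), \Db(\AA), \D(\AA)$ are defined). Throughout I would write $j\colon \AA \hookrightarrow \EE$ for the inclusion and show first that $j$ is exact in the sense of deflation-exact categories: if $A' \inflation A \deflation A''$ is a conflation in $\AA$, then, since $A' = \ker(A \deflation A'')$ is computed in $\EE$ and $\AA$ is deflation-closed, it is also a conflation in $\EE$. Consequently $j$ descends to a triangulated functor $\Kb(\AA) \to \Kb(\EE)$ sending $\AA$-acyclic complexes to $\EE$-acyclic complexes, and hence to a triangulated functor $\Phi^*\colon \D^*(\AA) \to \D^*(\EE)$ for $* \in \{-, \mathsf{b}, \emptyset\}$ (once the bounds can be controlled).

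For (1), the main work is constructing, for every bounded-above complex $E^\bullet \in \Cm(\EE)$, a quasi-isomorphism $A^\bullet \to E^\bullet$ with $A^\bullet \in \Cm(\AA)$, and thereby establishing essential surjectivity of $\Phi^-$. I would construct $A^\bullet$ by induction on the degree, at each stage choosing a deflation $A^n \deflation Z^n$ onto a suitable ``cycle object'' in $\EE$; the fact that $\AA$ is closed under kernels of deflations keeps the construction inside $\AA$, and a standard horseshoe/Cartan--Eilenberg argument glues the pieces into a genuine quasi-isomorphism. Once resolutions exist, fully faithfulness follows from the usual two-out-of-three argument for roofs: any roof $A^\bullet \xleftarrow{s} X^\bullet \to A'^\bullet$ in $\Dm(\EE)$ with $s$ a quasi-isomorphism can be replaced, after resolving $X^\bullet$ by a complex $\tilde A^\bullet \in \Cm(\AA)$, by an equivalent roof entirely in $\Cm(\AA)$. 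Here I would rely on the fact that quasi-isomorphisms in the deflation-exact setting still admit a calculus of (left) fractions, as developed in the references cited in the excerpt \cite{BazzoniCrivei13,HenrardvanRoosmalen19b}.

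For (2), I would show that when $\resdim_\AA(E) < \infty$ for every $E$, the resolution of a bounded complex $E^\bullet$ constructed above can be taken bounded. Concretely, if $E^\bullet$ is concentrated in degrees $[a,b]$, then after building the resolution down to degree $a$ one has an object $Z^a \in \EE$ with $\resdim_\AA(Z^a) < \infty$, and one stops the resolution at its finite $\AA$-resolution; a brutal truncation then yields an $A^\bullet \in \Cb(\AA)$ quasi-isomorphic to $E^\bullet$. Combined with the fully faithful part from (1), this gives the equivalence $\Db(\AA) \to \Db(\EE)$. For (3), the uniform bound $\resdim_\AA(\EE) \le n$ upgrades this construction to arbitrary unbounded complexes: in each degree, a resolution of length at most $n$ suffices, so the totalization of a double complex of resolutions lives again in $\C(\AA)$ rather than only $\Cm(\AA)$, and one obtains $\D(\AA) \to \D(\EE)$.

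The principal obstacle I anticipate is the fully-faithfulness step in part (1). In the classical exact setting one uses that $\AA$ is extension-closed to ``improve'' roofs, but here $\AA$ is only deflation-closed, so one must instead exploit the one-sided calculus of fractions for the localization $\Km(\AA) \to \Dm(\AA)$ and verify that the comparison with $\Dm(\EE)$ remains compatible. The secondary technical point, needed to make the truncation in (2) and the unbounded construction in (3) work, is that the kernel of a deflation between $\AA$-resolutions of objects of finite resolution dimension again has finite resolution dimension, and equivalently that the class of objects with $\resdim_\AA \le n$ is closed under kernels of deflations — a straightforward dimension shift once the deflation-exact structure on $\AA$ is in hand.
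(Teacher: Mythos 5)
Your plan for parts (1) and (2) follows the paper's strategy: construct a quasi-isomorphism $A^\bullet \to E^\bullet$ with $A^\bullet \in \Cm(\AA)$ degree by degree, verify that a bounded-above complex with terms in $\AA$ which is acyclic in $\EE$ is already acyclic in $\AA$ (this is where \ref{PR1} enters and is exactly what makes your roof argument close up), and conclude fully faithfulness by the standard localization criterion. One caveat even here: in a deflation-exact category the differentials of an arbitrary $E^\bullet$ need not be admissible, so there are no ``cycle objects'' and no Cartan--Eilenberg double complex to totalize; the correct inductive step is to pull back $E^{n-1}\to E^{n}$ along the chosen deflation $A^{n}\deflation E^{n}$ and then choose a deflation from an object of $\AA$ onto that pullback. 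With that replacement, your argument for (1) and (2) goes through and agrees with the paper's.

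The genuine gap is in part (3). You transport the fully-faithfulness argument from (1) essentially verbatim, but the identity $\Acm(\EE)\cap\Cm(\AA)=\Acm(\AA)$, which is what makes the roof calculus in (1) work, fails for unbounded complexes: for $\EE=\smod(A)$ with $A=k[X]/(X^2)$ and $\AA=\add(A)$, the doubly infinite complex $\cdots\xrightarrow{\cdot X}A\xrightarrow{\cdot X}A\xrightarrow{\cdot X}\cdots$ lies in $\C(\AA)$ and is acyclic in $\EE$, yet the images of its differentials do not lie in $\AA$, so it is not acyclic in $\AA$. Consequently a morphism in $\C(\AA)$ whose cone is $\EE$-acyclic need not be inverted in $\D(\AA)$, and your two-out-of-three step for roofs breaks down. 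This is where the paper does real work: using the uniform bound $\resdim_\AA(\EE)=n$ and a comparison-of-resolutions argument, it shows that every $E^\bullet\in\C(\AA)\cap\Ac(\EE)$ receives a morphism in $\TT$ from a complex in $\Ac(\AA)$ (equivalently, becomes acyclic in $\AA$ after adding a contractible complex with terms in $\AA$), and only then applies the localization criterion. Without a statement of this kind your proof of (3) establishes essential surjectivity but not fully faithfulness; your ``secondary technical point'' about closure of $\{E:\resdim_\AA(E)\le n\}$ under kernels of deflations is true but is not the missing ingredient.
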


Similar results can be found in various places in the literature (see for example \cite[Lemma I.4.6]{Hartshorne66}, \cite[Proposition 13.2.2]{KashiwaraSchapira06}, \cite[Proposition~5.14]{Stovicek14} or \cite[Lemma 3.8]{KernerStovicekTrlifaj11} and \Cref{remark:Comparison} for a short comparison).  An example of a resolving subcategory of an abelian category $\EE$ is given by a cotilting torsion-free class $\FF$; in this case, $\resdim_{\FF}(\EE)\leq 1$.  In this setting, \Cref{Theorem:Introduction} can be recovered from \cite[Appendix~B]{BondalVandenBergh03}, generalizing the well-known tilting construction from \cite[Theorem~3.3]{HappelReitenSmalo96}.  \Cref{Theorem:Introduction} allows us to generalize this equivalence to the case where $\FF$ is a cotilting pretorsion-free class in $\EE$.  The examples in \S\ref{Section:ExampleAndApplication} are of this form.

In \Cref{Theorem:Introduction}, the preresolving subcategory $\AA$ is endowed with the following set of conflations: a sequence $A \to B \to C$ in $\AA$ is a conflation in $\AA$ if and only if it is a conflation in $\EE$.  In \cite[Theorem~12.1]{Keller96}, a variant of \Cref{Theorem:Introduction} was shown for subcategories $\AA$ which are not deflation-closed subcategories of $\EE$, and hence, not resolving.  Removing the assumption of being deflation-closed, it is not immediately clear how to obtain a meaningful deflation-exact structure on $\AA$.

In section \ref{Section:KellersCConditions}, we describe a more involved conflation structure on $\AA$ (namely, we endow $\AA$ with the largest deflation-exact structure such that the embedding $\AA \to \EE$ maps conflations to conflations).  Modifying condition \ref{C2} accordingly, we find a version of \cite[Theorem~12.1]{Keller96} for subcategories $\AA$ which are not necessarily fully exact in $\EE.$

\begin{theorem}\label{theorem:IntroductionKeller}
	Let $\EE$ be a deflation-exact category and let $\AA\subseteq \EE$ be a full additive subcategory satisfying axioms \ref{C1} and \ref{C2'}.  If we endow $\AA$ with the largest deflation-exact structure such that the embedding $\AA \to \EE$ maps conflations in $\AA$ to conflations in $\EE$, then the obvious functor $\Dm(\AA)\xrightarrow{\Phi}\Dm(\EE)$ is a triangle equivalence.

	Moreover, if $\resdim_{\AA}(E)<\infty$ for all $E\in \EE$, the functor $\Db(\AA)\to \Db(\EE)$ is an equivalence.
\end{theorem}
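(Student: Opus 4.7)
The plan is to reduce Theorem~\ref{theorem:IntroductionKeller} to Theorem~\ref{Theorem:Introduction} via the relative weak idempotent completion of $\AA$ inside $\EE$, following the strategy hinted at in the abstract. Write $\tilde{\AA}$ for the full subcategory of $\EE$ consisting of those $X \in \EE$ admitting some $Y \in \EE$ with $X \oplus Y \in \AA$. Clearly $\AA \subseteq \tilde{\AA} \subseteq \EE$, and I would factor $\Phi$ as a composition
\[
\Dm(\AA) \longrightarrow \Dm(\tilde{\AA}) \longrightarrow \Dm(\EE),
\]
showing each arrow is a triangle equivalence.

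First I would verify that $\tilde{\AA}$ is preresolving in $\EE$. Axiom~\ref{C1} gives, for every $E \in \EE$, a deflation $A \deflation E$ with $A \in \AA \subseteq \tilde{\AA}$, so the first preresolving condition is immediate. Deflation-closedness of $\tilde{\AA}$ in $\EE$ is exactly what the modified axiom~\ref{C2'} is designed to supply: given a deflation $E \deflation \tilde{A}$ with $\tilde{A} \in \tilde{\AA}$, one picks $Y \in \EE$ with $\tilde{A} \oplus Y \in \AA$, enlarges the deflation to $E \oplus Y \deflation \tilde{A} \oplus Y$, invokes \ref{C2'} on this thickened deflation, and identifies the desired summand of the kernel in $\tilde{\AA}$. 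Once $\tilde{\AA} \subseteq \EE$ is known to be preresolving, Theorem~\ref{Theorem:Introduction}(1) yields a triangle equivalence $\Dm(\tilde{\AA}) \to \Dm(\EE)$, where $\tilde{\AA}$ carries the deflation-exact structure inherited from $\EE$.

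Next I would show that the embedding $\AA \hookrightarrow \tilde{\AA}$ lifts to an equivalence $\Dm(\AA) \to \Dm(\tilde{\AA})$. The deflation-exact structure on $\AA$ from the statement — the largest one such that $\AA \to \EE$ preserves conflations — coincides with the structure inherited through the chain $\AA \hookrightarrow \tilde{\AA} \hookrightarrow \EE$, because $\tilde{\AA}$ is itself deflation-closed in $\EE$ (so its conflations are exactly the $\EE$-conflations with all three terms in $\tilde{\AA}$), and then conflations in $\AA$ for the induced structure are precisely those $\tilde{\AA}$-conflations whose terms happen to lie in $\AA$. With this identification, I would establish the equivalence by the same $\AA$-resolution technique used for Theorem~\ref{Theorem:Introduction}(1): every $\tilde{A} \in \tilde{\AA}$ is the target of a split deflation $\tilde{A} \oplus Y \deflation \tilde{A}$ with $\tilde{A} \oplus Y \in \AA$, so $\AA$ resolves $\tilde{\AA}$ in the same manner that $\tilde{\AA}$ resolves $\EE$, and the methods of Section~3 apply verbatim.

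For the bounded statement, note that a direct summand of an $\AA$-resolution of $\tilde{A} \oplus Y$ yields an $\AA$-resolution of $\tilde{A}$, so summands do not increase $\AA$-resolution dimension; in particular $\resdim_\AA(E)<\infty$ for every $E \in \EE$ implies $\resdim_{\tilde{\AA}}(E)<\infty$. Theorem~\ref{Theorem:Introduction}(2) applied to $\tilde{\AA} \subseteq \EE$ then gives $\Db(\tilde{\AA}) \to \Db(\EE)$, and combining with the bounded restriction of $\Dm(\AA) \to \Dm(\tilde{\AA})$ finishes the argument. The main obstacle is the third step: making precise that the largest deflation-exact structure on $\AA$ genuinely agrees with the one induced via $\tilde{\AA}$, and verifying that the passage from $\AA$ to its relative weak idempotent completion really behaves like a (non-relative) weak idempotent completion at the level of derived categories, so that Theorem~\ref{Theorem:Introduction} can transfer cleanly across the inclusion $\AA \subseteq \tilde{\AA}$.
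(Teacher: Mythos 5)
Your overall strategy—factoring $\Phi$ through a relative weak idempotent completion of $\AA$ and invoking \Cref{Theorem:PreResolvingDerivedEquivalence} for the outer inclusion—is exactly the paper's ``alternative'' proof (via $\widehat{\AA}_{\EE}$ and \Cref{Corollary:RelativeWeakIdempotentCompletion}), and your verification that $\tilde{\AA}$ is preresolving in $\EE$ is essentially right. But the proof breaks at the inner inclusion $\AA\subseteq\tilde{\AA}$, in two related places. First, your identification of the conflation structure on $\AA$ is incorrect: the largest deflation-exact structure on $\AA$ making $\AA\to\EE$ exact is \emph{not} ``those $\tilde{\AA}$-conflations whose terms lie in $\AA$'' (i.e.\ all $\EE$-conflations with terms in $\AA$). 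It is the class of $\AA$-conflations of \Cref{definition:AAConflations}, which additionally requires all pullbacks along maps in $\AA$ to stay in $\AA$; axiom \ref{R2} forces this restriction. \Cref{Proposition:C2'YieldsAlmostDeflationClosed}(2) makes the discrepancy explicit: an $\EE$-conflation with all three terms in $\AA$ only becomes an $\AA$-conflation after adding a split summand $B\in\AA$. Second, because $\AA$ is not deflation-closed in $\tilde{\AA}$ (that is the whole point of this section), \Cref{Theorem:PreResolvingDerivedEquivalence} and ``the methods of Section~3'' do not apply verbatim to $\AA\subseteq\tilde{\AA}$: the fully-faithfulness step needs to know that every complex in $\Cm(\AA)$ which is acyclic in $\tilde{\AA}$ (equivalently in $\EE$) is homotopy equivalent to a complex that is acyclic for the $\AA$-conflation structure. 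This is precisely \Cref{Corollary:GoodReplacements}, whose proof rests on \Cref{Proposition:C2'YieldsAlmostDeflationClosed}; you flag it as ``the main obstacle'' but do not supply it, and it is the mathematical heart of the theorem rather than a routine verification.

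Two smaller remarks. Your $\tilde{\AA}$ (objects $X$ with $X\oplus Y\in\AA$ for some $Y\in\EE$) is a priori larger than the paper's $\widehat{\AA}_{\EE}$ (kernels of retractions $A\to B$ with $A,B\in\AA$, i.e.\ summands whose complement can be taken in $\AA$); for instance, with $\EE$ the category of finite-dimensional vector spaces and $\AA$ the even-dimensional ones, $\widehat{\AA}_{\EE}=\AA$ while $\tilde{\AA}=\EE$. Your deflation-closedness argument still works for $\tilde{\AA}$, but the discrepancy is a sign that the ``summand'' bookkeeping here is more delicate than for an absolute weak idempotent completion. Finally, the paper's primary proof of \Cref{Theorem:DeflationExactKellerTheorem} is more direct: essential surjectivity from \Cref{Lemma:ReplacementTechnique} (which only uses \ref{C1}) and fully faithfulness from \Cref{Corollary:GoodReplacements} via the Kashiwara--Schapira criterion; either route requires proving \Cref{Proposition:C2'YieldsAlmostDeflationClosed}, which is the missing ingredient in your proposal.
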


When $\EE$ is exact and $\AA$ is a fully exact subcategory, then \Cref{theorem:IntroductionKeller} reduces to \cite[Theorem~12.1]{Keller96}.

\paragraph*{\textbf{Acknowledgements.}} The authors wish to thank Francesco Genovese and Jan \v{S}\v{t}ov\'{\i}\v{c}ek for their useful comments and references. The second author is currently a postdoctoral researcher at FWO (12.M33.16N).
\section{Preliminaries}\label{section:Preliminaries}

In these preliminaries, we summarize results of \cite{BazzoniCrivei13,HenrardvanRoosmalen19b,HenrardvanRoosmalen19a} in a convenient form.

\subsection{One-sided exact categories}

\begin{definition}
	\begin{enumerate}
		\item	A \emph{conflation category} is an additive category $\EE$ together with a chosen class of kernel-cokernel pairs, called \emph{conflations}, such that this class is closed under isomorphisms. The kernel part of a conflation is called an \emph{inflation} and the cokernel part of a conflation is called a \emph{deflation}. We depict inflations by the symbol $\inflation$ and deflations by $\deflation$.
		\item An additive functor $F\colon \CC\to \DD$ between conflation categories is called \emph{(conflation-)exact} if conflations are mapped to conflations.
		\item A map $f\colon X\to Y$ in a conflation category $\EE$ is called \emph{admissible} if $f$ admits a deflation-inflation factorization, i.e.~$f$ factors as $X\deflation I\inflation Y$. An admissible morphism is depicted by $\circlearrow$.
		\item A cochain complex $(X^{\bullet}, d^{\bullet})$ in $\C(\EE)$ with $\EE$ a conflation category is called \emph{acyclic} or \emph{exact} if, for each $i \in \bZ$, the map $d_X^i\colon X^i\to X^{i+1}$ is admissible and $\ker(d_X^{i+1})\cong \im(d_X^i)$.
	\end{enumerate}
\end{definition}

\begin{remark}
	Let $\EE$ be a conflation category and let $f\colon X\to Y$ be a map. If $f$ is admissible, then the deflation-inflation factorization is unique up to isomorphism. Indeed, given a deflation-inflation factorization $X\deflation I\inflation Y$ of $f$, it is clear that $f$ admits a kernel and a cokernel, moreover, $\coker(\ker(f))=\coim(f)\cong I\cong \im(f)=\ker(\coker(f))$.
\end{remark}

\begin{definition}\label{Definition:DeflationExact}
  A \emph{deflation-exact category} $\EE$ is a conflation category satisfying the following axioms:
	\begin{enumerate}[label=\textbf{R\arabic*},start=0]
		\item\label{R0} For each $X\in \EE$, the map $X\to 0$ is a deflation.
		\item\label{R1} The composition of two deflations is a deflation.
		\item\label{R2} The pullback of a deflation along any morphism exists and deflations are stable under pullbacks.
	\end{enumerate}
	Dually, an \emph{inflation-exact category} is a conflation category $\EE$ such that the inflations satisfy the following axioms:
		\begin{enumerate}[label=\textbf{L\arabic*},start=0]
		\item\label{L0} For each $X\in \EE$, the map $0\to X$ is an inflation.
		\item\label{L1} The composition of two inflations is an inflation.
		\item\label{L2} The pushout of an inflation along any morphism exists and inflations are stable under pushouts.
	\end{enumerate}
\end{definition}

\begin{definition}\label{Definition:StronglyDeflationExact}
	Let $\EE$ be a conflation category. In addition to the properties listed in definition \ref{Definition:DeflationExact}, we also consider the following axioms:
	\begin{enumerate}[align=left]
		\myitem{\textbf{R3}}\label{R3} \hspace{0.175cm}If $i\colon A\rightarrow B$ and $p\colon B\rightarrow C$ are morphisms in $\EE$ such that $p$ has a kernel and $pi$ is a deflation, then $p$ is a deflation.
		\myitem{\textbf{R3}$^+$}\label{R3+} \hspace{0.175cm}If $i\colon A\rightarrow B$ and $p\colon B\rightarrow C$ are morphisms in $\EE$ such that $pi$ is a deflation, then $p$ is a deflation.
	\end{enumerate}
	The axioms \textbf{L3} and \textbf{L3}$^+$ are defined dually. A deflation-exact category satisfying \ref{R3} is called \emph{strongly deflation-exact}. Dually, an inflation-exact category satisfying axiom \textbf{L3} is called a strongly inflation-exact category.
\end{definition}

\begin{remark}\label{Remark:BasicDefinitions}
	\begin{enumerate}
		\item Inflation-exact categories have also been called left exact categories in \cite{BazzoniCrivei13, Rump11} and right exact categories in \cite{Rosenberg11}.  Similarly, deflation-exact categories have also been referred to as right exact or left exact categories.  As the use of left and right is not consistent, we prefer to use the above terminology.
		\item A deflation-exact category has the structure of a Grothendieck pretopology (see for example \cite{KaledinLowen15,Rosenberg11}).
		\item Axiom \ref{R0} above is axiom \textbf{R0}$^*$ in \cite{BazzoniCrivei13,HenrardvanRoosmalen19a}. This axiom ensures that all split kernel-cokernel pairs are conflations in a deflation-exact category (this is also required in \cite{Rump11}). 
		\item An exact category in the sense of Quillen (see \cite{Quillen73}) is a conflation category $\EE$ satisfying axioms \ref{R0} through \ref{R3} and \ref{L0} through \textbf{L3}. In \cite[Appendix~A]{Keller90}, Keller shows that axioms \ref{R0}, \ref{R1}, \ref{R2}, and \ref{L2} suffice to define an exact category.
		\item Axioms \ref{R3} and \textbf{L3} are sometimes referred to as Quillen's \emph{obscure axioms} (see \cite{Buhler10,ThomasonTrobaugh90}).
		\item Axiom \ref{R3+} implies axiom \ref{R3}, dually, axiom \textbf{L3}$^+$ implies axiom \textbf{L3}.
		\item For a deflation-exact category $\EE$, axiom \ref{R3+} is equivalent to $\EE$ being weakly idempotent complete and satisfying axiom \ref{R3}.
	\end{enumerate}
\end{remark}

The following proposition is used multiple times throughout this text (see \cite[Proposition 5.7]{BazzoniCrivei13} or \cite[Proposition 3.7]{HenrardvanRoosmalen19a}).

\begin{proposition}\label{Proposition:LocalizationPaperProposition3.7}
	Let $\EE$ be a deflation-exact category. For a commutative diagram
	\[\xymatrix{
		Y'\ar@{->>}[r]^{p'}\ar[d]^f & Z'\ar[d]^{g}\\
		Y\ar@{->>}[r]^p & Z
	}\] whose horizontal arrows are deflation, the following are equivalent:
	\begin{enumerate}
		\item the square is a pullback square;
		\item the square is bicartesian, i.e.~both a pullback and pushout square;
		\item the induced sequence $\xymatrix{Y'\ar@{>->}[r]^-{\begin{psmallmatrix}f\\-p'\end{psmallmatrix}} & Y\oplus Z'\ar@{->>}[r]^-{\begin{psmallmatrix}p& g\end{psmallmatrix}} & Z}$ is a conflation.
	\end{enumerate}
\end{proposition}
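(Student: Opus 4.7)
The plan is to establish the cyclic implications (2) $\Rightarrow$ (1) $\Rightarrow$ (3) $\Rightarrow$ (2); the first is immediate from the definition of a bicartesian square, so the work lies in the remaining two.

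For (1) $\Rightarrow$ (3), axiom \ref{R2} at once yields that $p'$ is a deflation, and the real content is to show that $(p\ g)\colon Y\oplus Z'\to Z$ is itself a deflation. I would factor it as the composition
\[
Y\oplus Z' \xrightarrow{\,p\,\oplus\,\mathrm{id}_{Z'}\,} Z\oplus Z' \xrightarrow{\,(\mathrm{id}_Z\ g)\,} Z.
\]
The first arrow $p\oplus\mathrm{id}_{Z'}$ is realized as the pullback of the deflation $p$ along the projection $\pi_Z\colon Z\oplus Z' \to Z$, hence is a deflation by \ref{R2}. The second arrow coincides with the split deflation $(\mathrm{id}_Z\ 0)$ up to the shear automorphism $\left(\begin{smallmatrix}1 & g\\ 0 & 1\end{smallmatrix}\right)$ of $Z\oplus Z'$, and is therefore a deflation as well (split deflations are available thanks to \ref{R0}, see \Cref{Remark:BasicDefinitions}(3)). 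Applying \ref{R1}, the composition $(p\ g)$ is a deflation. That $\binom{f}{-p'}$ is a kernel of $(p\ g)$ is then a purely additive fact: the pullback of $p$ along $g$ is by construction the kernel of $(p\ -g)\colon Y\oplus Z'\to Z$, and a sign flip on the second component brings this into the required form.

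For (3) $\Rightarrow$ (2), I would read both universal properties off the conflation. In any additive category, the kernel of $(p\ g)\colon Y\oplus Z'\to Z$ is precisely the pullback of $p$ and $g$, and the cokernel of $\binom{f}{-p'}\colon Y'\to Y\oplus Z'$ is precisely the pushout of $f$ and $p'$. Since a conflation is in particular a kernel-cokernel pair, both universal properties are satisfied and the square is bicartesian, closing the loop.

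The main obstacle is the factorization step in (1) $\Rightarrow$ (3) establishing that $(p\ g)$ is a deflation. The shear trick is what makes this work in the absence of a map $Z'\to Y$, which would otherwise be needed to convert $(p\ g)$ directly into $(p\ 0)$ by an automorphism of $Y\oplus Z'$; instead, the shear is applied at the target $Z\oplus Z'$, where the map $g$ is available.
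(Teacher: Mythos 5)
Your proof is correct, and it is essentially the standard argument: the paper itself offers no proof of this proposition, citing instead \cite[Proposition~5.7]{BazzoniCrivei13} and \cite[Proposition~3.7]{HenrardvanRoosmalen19a}, where the key step is exactly your factorization of $\begin{psmallmatrix}p& g\end{psmallmatrix}$ as the pullback-induced deflation $Y\oplus Z'\twoheadrightarrow Z\oplus Z'$ followed by a shear composed with the split deflation $\begin{psmallmatrix}1_Z& 0\end{psmallmatrix}$. The remaining identifications (kernel of $\begin{psmallmatrix}p& g\end{psmallmatrix}$ as the pullback, cokernel of $\begin{psmallmatrix}f\\-p'\end{psmallmatrix}$ as the pushout) are the purely additive facts you invoke, so the cycle $(2)\Rightarrow(1)\Rightarrow(3)\Rightarrow(2)$ closes as claimed.
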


The following theorem (see \cite[Theorem~1.1 and Theorem~1.2]{HenrardvanRoosmalen20Obs}) highlights the importance of axioms \ref{R3} and \ref{R3+}.

\begin{theorem}\label{Theorem:ImportanceR3andR3+}
	Let $\EE$ be a deflation-exact category.
		\begin{enumerate}
			\item The category $\EE$ satisfies axiom \ref{R3} if and only if the Nine Lemma holds.
			\item The category $\EE$ satisfies axiom \ref{R3+} if and only if the Snake Lemma holds.
		\end{enumerate}
\end{theorem}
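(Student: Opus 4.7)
My plan is to split the theorem into its two parts and, within each, handle the forward and converse directions separately. The forward directions adapt standard diagram-chasing arguments from the exact-category setting to the one-sided context; the converses, which extract the respective obscure axiom from the corresponding diagram lemma, form the substantive content of the theorem.

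For the forward direction of (1), I would assume \ref{R3} and begin with a 3x3 commutative diagram in which every row and two of the columns are conflations. Using \ref{R2} I form the relevant pullbacks, and \Cref{Proposition:LocalizationPaperProposition3.7} identifies the resulting squares as bicartesian, producing enough conflations to populate the diagram. Axiom \ref{R3} is invoked at the step where a map with an already-existing kernel becomes a deflation because its composite with a known deflation is one. For the converse, assume the Nine Lemma and suppose we have $i\colon A\to B$ and $p\colon B\to C$ with $pi$ a deflation and $\ker(p)$ existing. I would assemble a 3x3 diagram whose two conflation-columns come directly from the hypotheses (one built from $\ker(p)$, one from $\ker(pi)$) and whose two guaranteed conflation-rows come from the factorization $A\to B\to C$ together with split conflations of biproduct type; the Nine Lemma then forces the missing row or column to be a conflation, which is precisely the statement that $p$ is a deflation. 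The delicate point is choosing the diagram so that every entry needed as a conflation is verified directly from the data.

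For the forward direction of (2), I would use that \ref{R3+} is equivalent to \ref{R3} together with weak idempotent completeness by \Cref{Remark:BasicDefinitions}. The Snake Lemma construction proceeds as in the exact case: produce the connecting morphism via a bicartesian square (again via \Cref{Proposition:LocalizationPaperProposition3.7}) and invoke the Nine Lemma from part (1) to secure exactness along the rows. Weak idempotent completeness enters at the step where a morphism obtained by pullback is a split epimorphism onto a retract whose kernel must be isolated; only after splitting off this retract can one extract the connecting map. For the converse, I would first apply the Snake Lemma to a degenerate diagram with zero bottom row to recover the Nine Lemma, and hence \ref{R3}. Weak idempotent completeness is then obtained by feeding the Snake Lemma a test diagram built from a given retraction $rs=\mathrm{id}$, so that the resulting snake sequence isolates the kernel of the associated idempotent and forces it to exist as an object of $\EE$.

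The main obstacle I anticipate is the converse of (2): extracting weak idempotent completeness from the Snake Lemma. Constructing a test diagram whose snake sequence is precisely the vehicle that exhibits the required kernel requires careful bookkeeping, and is what gives the theorem its conceptual content, namely that weak idempotent completeness is not an auxiliary hypothesis but is already encoded in the statement of the Snake Lemma in the one-sided setting.
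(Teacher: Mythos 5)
First, note that the paper does not prove this statement at all: it is imported verbatim from \cite{HenrardvanRoosmalen20Obs} (Theorems~1.1 and~1.2 there), so there is no in-paper argument to compare yours against; the proofs form the main content of that reference. Judged on its own terms, your sketch has the right architecture --- in particular, splitting part (2) via the equivalence of \ref{R3+} with ``\ref{R3} plus weak idempotent completeness'' from \Cref{Remark:BasicDefinitions}, and recognizing that the converse directions carry the real weight --- but it has concrete gaps. The most basic one is that you never state which formulations of the Nine Lemma and the Snake Lemma you are proving equivalent to the axioms. In a deflation-exact category arbitrary morphisms need not have kernels or cokernels, so one must specify which rows and columns of the $3\times 3$ diagram are hypotheses and which are conclusions, which vertical maps in the snake diagram are assumed admissible, and what ``exact'' means for the resulting sequence; both equivalences are sensitive to these choices, and the argument cannot be checked without them.

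Second, both converse directions as sketched are circular or underspecified. For (1), you propose feeding the Nine Lemma a column ``built from $\ker(p)$''; but $\ker(p)\to B\to C$ being a conflation is exactly the conclusion you are after, so it cannot be supplied as a hypothesis column --- the diagram must be arranged so that this sequence is the one the lemma produces, with every genuinely required conflation verified from the deflation $pi$ and from split data. For (2), if the Snake Lemma is formulated for morphisms of conflations whose vertical maps are already admissible, it cannot be used to manufacture the kernel of a retraction, since the input data would presuppose the output; you need a version in which admissibility of the middle vertical map is a conclusion rather than a hypothesis, and you must actually exhibit the test diagram built from $rs=\mathrm{id}$. Finally, ``apply the Snake Lemma to a degenerate diagram with zero bottom row'' does not recover the Nine Lemma: kernels of zero maps are the objects themselves and the connecting data collapses. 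The standard derivation instead applies the Snake Lemma to the morphism from the middle row to the bottom row of the $3\times 3$ diagram, whose cokernel row vanishes because the vertical maps there are deflations. None of this is fatal to the strategy, but as written the proposal is a plan rather than a proof.
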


\subsection{Derived categories of one-sided exact categories}

Let $\EE$ be a one-sided exact category. The following is a small extension of \cite[Lemma~7.2]{BazzoniCrivei13}.

\begin{lemma}\label{Lemma:ConeOfAcyclic}
	The mapping cone $\cone(f^{\bullet})$ of a map $f^{\bullet}\colon X^{\bullet}\to Y^{\bullet}$ between acyclic complexes $X^{\bullet},Y^{\bullet}\in \C(\EE)$ is acyclic.  Furthermore, for each $n \in \bZ$, there is are conflations:
	\[\mbox{$\im(d^{n-1}_X) \inflation X^n \oplus \im(d^{n-1}_X) \deflation \im(d^{n-1}_{\cone(f)})$ and $\im(d^{n-1}_{\cone(f)}) \inflation Y^{n} \oplus \im(d^{n}_X) \deflation \im(d^{n}_Y).$}\]
\end{lemma}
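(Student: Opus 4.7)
My approach is to establish the two conflations via the pullback characterization of \Cref{Proposition:LocalizationPaperProposition3.7} and then deduce acyclicity (extending \cite[Lemma~7.2]{BazzoniCrivei13}). First I would set up factorizations: acyclicity gives $d_X^n = \iota_X^n \pi_X^n$ with $\pi_X^n\colon X^n \deflation \im d_X^n$ and $\iota_X^n\colon \im d_X^n \inflation X^{n+1}$ identifying $\im d_X^n = \ker d_X^{n+1}$ (analogously for $Y$). Since $f^{n+1}\iota_X^n$ lands in $\ker d_Y^{n+1} = \im d_Y^n$, the inflation $\iota_Y^n$ induces a unique map $\phi^n\colon \im d_X^n \to \im d_Y^n$ with $\iota_Y^n \phi^n = f^{n+1}\iota_X^n$, and a quick computation also gives the compatibility $\pi_Y^n f^n = \phi^n\pi_X^n$.

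For the second conflation, I would form the pullback $P$ of $\pi_Y^n$ along $-\phi^n$, which exists by axiom \ref{R2}. Applying \Cref{Proposition:LocalizationPaperProposition3.7} to the resulting bicartesian square directly delivers the conflation $P \inflation Y^n \oplus \im d_X^n \deflation \im d_Y^n$ with the expected shape of maps. I would then identify $P$ with $\im(d_{\cone(f)}^{n-1})$: the differential $d_{\cone(f)}^{n-1}\colon Y^{n-1}\oplus X^n \to Y^n\oplus X^{n+1}$ factors through $Y^n \oplus \im d_X^n$ because its $X^{n+1}$-component is $-\iota_X^n\pi_X^n$, and the universal property of the pullback then factors it further through $P$, producing a deflation $Y^{n-1}\oplus X^n \deflation P$ that realizes $P$ as the admissible image.

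For the first conflation I would construct the deflation $X^n\oplus \im d_X^{n-1} \deflation P$ through the universal property of $P$: the projections $(x,\eta)\mapsto f^n(x)+\iota_Y^{n-1}(\eta)$ into $Y^n$ and $(x,\eta)\mapsto -\pi_X^n(x)$ into $\im d_X^n$ agree over $\im d_Y^n$ thanks to $\pi_Y^n f^n = \phi^n \pi_X^n$ and $\pi_Y^n \iota_Y^{n-1} = 0$. The main obstacle is establishing that this induced map is genuinely a deflation in the deflation-exact setting; I would address it with a $3\times 3$-style argument comparing the split conflation $\im d_X^{n-1}\oplus\im d_Y^{n-1} \inflation X^n\oplus\im d_Y^{n-1} \deflation \im d_X^n$ against the pullback conflation $\im d_Y^{n-1}\inflation P\deflation \im d_X^n$, invoking \Cref{Proposition:LocalizationPaperProposition3.7} once more to force the middle vertical to be a deflation. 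A direct kernel computation then identifies the inflation term as $\im d_X^{n-1}$. Admissibility of $d_{\cone(f)}^{n-1}$ follows automatically from the composition $Y^{n-1}\oplus X^n \deflation P\inflation Y^n\oplus X^{n+1}$, and a parallel pullback identification of $\ker(d_{\cone(f)}^n)$ with the same $P$ closes the proof of acyclicity.
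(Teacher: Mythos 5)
Your plan --- realize $\im(d^{n-1}_{\cone(f)})$ as the pullback $P$ of $\pi^n_Y$ along $-\phi^n$, read the second conflation off \Cref{Proposition:LocalizationPaperProposition3.7}, and obtain the first conflation from a pushout-type square --- reconstructs precisely the $3\times 3$ diagram of bicartesian squares from \cite[Lemma~7.2]{BazzoniCrivei13} that the paper's proof quotes, and the second conflation is indeed immediate since both horizontal arrows of your pullback square, $P\deflation\im(d^n_X)$ and $Y^n\deflation\im(d^n_Y)$, are deflations.

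The gap is in the step you yourself single out as the main obstacle: proving that the induced map $X^n\oplus\im(d^{n-1}_Y)\to P$ (and consequently $Y^{n-1}\oplus X^n\to P$) is a deflation. Your proposed ``$3\times 3$-style argument invoking \Cref{Proposition:LocalizationPaperProposition3.7} to force the middle vertical to be a deflation'' does not go through: that proposition only concerns squares whose horizontal arrows are deflations, and the only such square in your comparison is the right-hand one over $\im(d^n_X)$, where being a pullback would force the middle vertical to be an isomorphism rather than merely a deflation. A short-five-lemma deduction (deflation on the left, isomorphism on the right, hence deflation in the middle) reduces to deducing that a map is a deflation from the fact that its precomposition with a split epimorphism is one, which is literally axiom \ref{R3}; since $\EE$ is not assumed to satisfy \ref{R3} in this lemma (cf.\ \Cref{Theorem:ImportanceR3andR3+}), this route is closed. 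The repair is the standard one: pull the deflation $P\deflation\im(d^n_X)$ back along $\pi^n_X\colon X^n\deflation\im(d^n_X)$ (axiom \ref{R2}) to obtain a conflation $\im(d^{n-1}_Y)\inflation Q\deflation X^n$; the induced map $X^n\to P$ gives a section of $Q\deflation X^n$, whence $Q\cong\im(d^{n-1}_Y)\oplus X^n$ by a purely additive kernel computation; the other projection $Q\deflation P$ is a deflation by \ref{R2} with kernel $\ker(\pi^n_X)=\im(d^{n-1}_X)$, which is exactly the first conflation. (Note its middle term is $X^n\oplus\im(d^{n-1}_Y)$; your own formula $\iota^{n-1}_Y(\eta)$ shows this is what you intend, and the printed statement carries the same typo.) With that in place, $Y^{n-1}\oplus X^n\deflation P$ is a composite of deflations by \ref{R1}, and the remainder of your argument for admissibility and acyclicity goes through.
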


\begin{proof}
The proof is as in \cite[Lemma~7.2]{BazzoniCrivei13}, where it was noted that, for each $n \in \bZ$, there is a commutative diagram:
\[\xymatrix{ {\im(d^{n-1}_X)} \ar[d] \ar@{>->}[r]\ar@{}[rd]|{\text{BC}} & X^n \ar@{->>}[r] \ar[d] & {\im(d^{n}_X)} \ar@{=}[d] \\
{\im(d^{n-1}_Y)} \ar@{=}[d] \ar@{>->}[r] & {\im(d^{n-1}_{\cone(f)})} \ar@{->>}[r] \ar[d]\ar@{}[rd]|{\text{BC}} & {\im(d^{n}_Y)} \ar[d] \\
{\im(d^{n-1}_Y)} \ar@{>->}[r] & Y^n \ar@{->>}[r] & {\im(d^{n}_Y)}}\]
where the rows are conflations and the squares labeled BC are bicartesian.  The required conflations are obtained by \cite[Proposition 3.7]{HenrardvanRoosmalen19a}.
\end{proof}

Write $\Ac(\EE)\subseteq \C(\EE)$ for the unbounded acyclic complexes. Analogous to exact categories \cite{Neeman90}, one can define the unbounded derived category $\D(\EE)$ as the Verdier localization $\K(\EE)/\left\langle\Ac(\EE)\right\rangle_{\text{thick}}$ of the homotopy category $\K(\EE)$ by the thick closure of the triangulated subcategory of acyclic complexes. Note that $\D(\EE)$ is the localization of $\K(\EE)$ with respect to the saturated multiplicative system $\SS$ of those maps $f^{\bullet}\in \Mor(\K(\EE))$ such that $\cone(f^{\bullet})\in \left\langle\Ac(\EE)\right\rangle_{\text{thick}}$. The right bounded derived category $\Dm(\EE)$ and bounded derived category $\Db(\EE)$ can be defined analogously.

\begin{convention}\label{Convention:QuasiIsomorphism}
	We use the following convention: a map $f^{\bullet}\in \Mor(\C(\EE))$ such that $\cone(f^{\bullet})\in \Ac(\EE)$ is called a \emph{quasi-isomorphism}. Note that $\SS$ is simply the multiplicative system in $\K(\EE)$ obtained as the saturated closure of the quasi-isomorphisms, in particular, the derived category $\D(\EE)$ is equivalent to the localization $\K(\EE)[\SS^{-1}]$.
\end{convention}

Following the above convention, a composition of quasi-isomorphisms need not be a quasi-isomorphism but it does produce a morphism in $\SS\subseteq \Mor(\K(\EE))$.

The following proposition summarizes some useful properties of the derived category (see \cite{HenrardvanRoosmalen19b}).

\begin{proposition}\label{Proposition:BasicPropertiesDerivedCategory}
	Let $\EE$ be a deflation-exact category and let $*\in \{-,b\}$
	\begin{enumerate}
		\item The natural embedding $i\colon \EE\to \D(\EE)$ is fully faithful.
		\item For all $X, Y\in \EE$ and $n>0$, $\Hom_{\D(\EE)}(\Sigma^n iX, iY)=0$.
		\item Every conflation $X\inflation Y\deflation Z$ in $\EE$ maps to a triangle $iX\to iY\to iZ\to \Sigma iX$ in $\D(\EE)$.
		\item\label{item:R3Triangle} Axiom \ref{R3} holds if and only if every sequence $X\to Y\to Z$ that lifts to a triangle in $\D(\EE)$ is a conflation in $\EE$.
		\item The category $\Ac^*(\EE)$ is a thick triangulated subcategory of $\K^*(\EE)$ if and only if $\EE$ satisfies axiom \ref{R3+}.
		\item The category $\Ac(\EE)$ is a thick triangulated subcategory of $\K(\EE)$ if and only if $\EE$ is idempotent complete and satisfies axiom \ref{R3}.
	\end{enumerate}
\end{proposition}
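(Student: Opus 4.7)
The plan is to treat this proposition as a compendium and handle each of the six items separately. Items (1)--(3) are the foundational facts that make $\D^{\ast}(\EE)$ well-behaved, while items (4)--(6) are characterizations of the one-sided axioms in terms of the triangulated structure. The overarching technical tool throughout is \Cref{Lemma:ConeOfAcyclic}, which lets us manipulate cones of maps between acyclic complexes componentwise, together with the Verdier localization description of $\D(\EE)$ via the saturated multiplicative system $\SS$ described in \Cref{Convention:QuasiIsomorphism}.

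For (1) and (2), I would argue inside $\K(\EE)$ using the calculus of fractions. For fully faithfulness, if a morphism $f\colon X\to Y$ in $\EE$ vanishes in $\D(\EE)$, then there is a quasi-isomorphism $s\colon Z^{\bullet}\to iX$ from an acyclic cone with $fs\simeq 0$ in $\K(\EE)$; applying \Cref{Lemma:ConeOfAcyclic} to $\cone(s)$ one can truncate and reduce to checking that $f=0$ componentwise. Surjectivity of the map $iX\to\D(\EE)(iX,iY)$ is a similar zigzag argument, where any roof $iX\xleftarrow{s}Z^{\bullet}\to iY$ can be replaced, after a homotopy coming from \Cref{Lemma:ConeOfAcyclic}, by an honest morphism in $\EE$. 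The vanishing in (2) is analogous: a roof $\Sigma^{n}iX\xleftarrow{s}Z^{\bullet}\to iY$ representing a class in $\Hom_{\D(\EE)}(\Sigma^{n}iX,iY)$ with $n>0$ can be truncated so that the right-hand morphism is forced to be null-homotopic.

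Item (3) is a direct construction: given a conflation $X\inflation Y\deflation Z$, the two-term complex $\cone(iX\to iY)$ admits a natural chain map to $iZ$, and the acyclicity of the conflation shows that this chain map is a quasi-isomorphism, yielding the required triangle in $\D(\EE)$. For (4), the forward implication is immediate from (3) once one checks that any sequence lifting to a triangle must have its first morphism admissible with the right kernel. The converse assumes the triangle characterization and recovers \ref{R3}: given $i\colon A\to B$ and $p\colon B\to C$ with $p$ admitting a kernel and $pi$ a deflation, one assembles a candidate triangle in $\D(\EE)$ from the kernel of $pi$ and invokes the hypothesis to conclude that $p$ is itself a deflation.

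For (5) and (6), the main observation is that triangulation of $\Ac^{\ast}(\EE)\subseteq\K^{\ast}(\EE)$ follows from \Cref{Lemma:ConeOfAcyclic}, while thickness demands additional closure under direct summands; a direct summand of an acyclic complex stays acyclic precisely when the underlying category has enough splitting of idempotents to promote each componentwise summand decomposition to a conflation, which is where axioms \ref{R3+} (equivalently, weak idempotent completeness plus \ref{R3}, by \Cref{Remark:BasicDefinitions}) and full idempotent completeness enter. The main obstacle I anticipate is the systematic conversion of closure properties in $\K^{\ast}(\EE)$ back into the one-sided axioms on $\EE$ for items (4)--(6); this essentially mirrors the obscure-axiom analysis recorded in \Cref{Theorem:ImportanceR3andR3+}, and the detailed verifications have been carried out in \cite{HenrardvanRoosmalen19b}, which I would cite for the remaining bookkeeping.
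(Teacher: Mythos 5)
The paper does not actually prove this proposition: it appears in the preliminaries as a summary of results imported from \cite{HenrardvanRoosmalen19b}, so there is no internal argument to compare yours against. Your outline is broadly in the spirit of how these facts are established in that reference, and your closing deferral to \cite{HenrardvanRoosmalen19b} for the detailed verifications effectively reproduces what the paper itself does.

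Two points in your sketch are nevertheless genuinely understated or off. First, in item (4) the forward implication (axiom \ref{R3} implies that every sequence lifting to a triangle is a conflation) is not ``immediate from (3)'': item (3) only says that conflations produce triangles, whereas here you must start from an abstract triangle $iX\to iY\to iZ\to\Sigma iX$, manufacture an actual kernel--cokernel pair in $\EE$, and invoke \ref{R3} to upgrade $Y\to Z$ to a deflation; the clause you tuck into ``once one checks that any sequence lifting to a triangle must have its first morphism admissible with the right kernel'' is the entire content of that step, and it is precisely where \ref{R3} is used (your sketch places the use of \ref{R3} only in the converse). Second, in items (1) and (2) you take roofs whose denominator has acyclic cone, but $\SS$ is by definition the \emph{saturated} closure of the quasi-isomorphisms, so a morphism of $\SS$ only has cone in $\langle\Ac(\EE)\rangle_{\text{thick}}$; without \ref{R3+} you cannot assume the denominators are honest quasi-isomorphisms, and the reduction to that case requires an argument. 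For (5)--(6), identifying thickness with closure under direct summands and hence with (weak) idempotent completeness plus the obscure axiom is the right heuristic, but note that even closure of $\Ac^{*}(\EE)$ under isomorphisms of $\K^{*}(\EE)$ (i.e.\ homotopy equivalences) is not automatic in a one-sided exact category and is part of what \ref{R3+} buys; those substantive verifications are exactly the ones carried out in \cite{HenrardvanRoosmalen19b}, which both you and the paper ultimately cite.
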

\section{Preresolving subcategories and bounded derived equivalences}\label{section:Preresolving}

Throughout this section, $\EE$ denotes a deflation-exact category (all results can be dualized to inflation-exact categories, see \Cref{Remark:DualSetting}).

\subsection{Definitions and basic properties}

We start by recalling the notion of a preresolving subcategory and the resolution dimension (see \cite[Definition~2.1]{Stovicek14} or \cite[Definition~2]{Rump20}).

\begin{definition}\label{Definition:PreResolvingAndFinitelyPreresolving}
	A full additive subcategory $\AA\subseteq \EE$ is called \emph{preresolving} if the following two conditions are met:
	\begin{enumerate}[label=\textbf{PR\arabic*},start=1]
		\item\label{PR2} For every $E\in \EE$ there exists a deflation $A\deflation E$ with $A\in \AA$.
		\item\label{PR1} The subcategory $\AA\subseteq \EE$ is \emph{deflation-closed}, i.e.~for every conflation $X \inflation Y \deflation Z$, if $Y,Z \in \AA$, then $X \in \AA$.
	\end{enumerate}
	
	Let $\AA\subseteq \EE$ be a an additive subcategory satisfying axiom \ref{PR2}. The \emph{resolution dimension} of an object $E\in \EE$ with respect to $\AA$, denoted $\resdim_{\AA}(E)$, is the smallest integer $n\geq 0$ such that there exists an exact sequence
	\[0 \to A^{-n} \to A^{-n+1} \to \cdots \to A^{0} \to E \to 0\]
in $\EE$ with each $A^i \in \AA$.  If such an $n$ does not exist, we write $\resdim_\AA(E) = \infty$.

Furthermore, we define $\resdim_{\AA}(\EE)= \sup \{\resdim_{\AA}(E) \mid \forall E \in \EE\} \in \mathbb{Z}^+\cup \{\infty\}$.

A preresolving subcategory $\AA\subseteq \EE$ is called \emph{finitely} preresolving if for each object $E\in \EE$, $\resdim_{\AA}(E)<\infty$, and is called a \emph{uniformly} preresolving subcategory if $\resdim_{\AA}(\EE)<\infty$. Furthermore, a preresolving subcategory $\AA\subseteq \EE$ is called \emph{resolving} if $\AA\subseteq \EE$ is extension-closed.
\end{definition}

\begin{remark}
It follows from axiom \ref{PR2} that each object in $\EE$ has a resolution by objects in $\AA$.
\end{remark}

\begin{notation}
For an object $E \in \EE$, we often write $A^\bullet \to E$ for the complex $\cdots \to A^{-n} \to A^{-n+1} \to \cdots \to A^{0} \to E \to 0 \to \cdots$  Here, we assume that $E$ has degree $1$.
\end{notation}

\begin{remark}\label{Remark:DualSetting}
The notions of a coresolving and a precoresolving subcategory of an exact or inflation-exact category are dual.
\end{remark}

\begin{remark}
	The category $\PP$ of projectives of \cite[Example~4.10]{BergvanRoosmalen14} is a finitely resolving subcategory of $\smod(\fb)$, but not uniformly resolving.
\end{remark}

\begin{proposition}\label{Proposition:DeflationClosedInheritsDeflationExactStructure}
	Let $\AA\subseteq \EE$ be a deflation-closed subcategory. The category $\AA$ inherits a deflation-exact structure from $\EE$ (the conflations are simply those conflations in $\EE$ that lie in $\AA$). Moreover, if $\EE$ satisfies axiom \ref{R3+} or \ref{R3}, then $\AA$ satisfies axiom \ref{R3+} or \ref{R3}, respectively.
\end{proposition}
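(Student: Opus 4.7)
The plan is to verify axioms \ref{R0}, \ref{R1}, and \ref{R2} for $\AA$ equipped with the inherited class of conflations---those conflations of $\EE$ whose three terms lie in $\AA$---and then to address \ref{R3} and \ref{R3+} separately. Axioms \ref{R0} and \ref{R1} reduce immediately to the corresponding axioms in $\EE$ together with deflation-closedness: the split conflation $X \xrightarrow{1_X} X \to 0$ is a conflation of $\EE$ with all terms in $\AA$, and if $f\colon X \deflation Y, g\colon Y \deflation Z$ are deflations in $\AA$, then $gf$ is a deflation in $\EE$ with $X, Z \in \AA$, so $\ker(gf)\in\AA$ by deflation-closedness. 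For \ref{R2}, given a deflation $g\colon Y \deflation Z$ in $\AA$ and a morphism $h\colon Z' \to Z$ in $\AA$, form the pullback in $\EE$; by \Cref{Proposition:LocalizationPaperProposition3.7} it sits in a conflation $Y' \inflation Y \oplus Z' \deflation Z$ of $\EE$, and deflation-closedness (applied twice) gives both $Y' \in \AA$ and $\ker(g') \in \AA$ for the induced deflation $g'\colon Y' \deflation Z'$. The pullback's universal property in $\EE$ then restricts to $\AA$.

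For \ref{R3+} (assuming $\EE$ satisfies it), if $i\colon A \to B, p\colon B \to C$ are morphisms in $\AA$ with $pi$ a deflation in $\AA$, then $pi$ is a deflation in $\EE$, so \ref{R3+} in $\EE$ forces $p$ to be a deflation in $\EE$, and deflation-closedness places $\ker(p) \in \AA$. For \ref{R3} (assuming $\EE$ satisfies it), the subtle point is that the hypothesis supplies a kernel $k\colon K \to B$ only in $\AA$, which need not be a kernel in $\EE$. The plan is to form the pullback $P$ of $p$ along $pi$ in $\EE$, producing a deflation $q\colon P \deflation B$ and a morphism $r\colon P \to A$; by \Cref{Proposition:LocalizationPaperProposition3.7} and deflation-closedness, $P \in \AA$. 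The pullback's universal property yields $s\colon A \to P$ with $rs = 1_A, qs = i$ and $\iota\colon K \to P$ with $r\iota = 0, q\iota = k$. Since $p \circ (q - ir) = 0$ and $P \in \AA$, the $\AA$-kernel property of $k$ gives a unique $\pi\colon P \to K$ with $k\pi = q - ir$. A direct verification, invoking the universal property of $P$ for one direction, shows that $(s, \iota)\colon A \oplus K \to P$ and $(r, \pi)\colon P \to A \oplus K$ are mutually inverse, so $P \cong A \oplus K$; consequently $r$ is a split projection with kernel $K$, and pasting $\ker(r) \to P$ with the original pullback square identifies $k$ as the kernel of $p$ in $\EE$. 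Now \ref{R3} in $\EE$ applied to $i, p$ forces $p$ to be a deflation in $\EE$, hence (by deflation-closedness once more) in $\AA$.

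The main obstacle is the \ref{R3} case: a kernel in $\AA$ is not automatically a kernel in $\EE$, so \ref{R3} in $\EE$ cannot be invoked as a black box. The key device is to exploit the $\AA$-kernel $k$ to realise the pullback $P$ as a biproduct $A \oplus K$ in $\EE$, which promotes the $\AA$-kernel of $p$ to an $\EE$-kernel and enables the application of \ref{R3} in $\EE$.
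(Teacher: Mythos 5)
Your proposal is correct, and for axioms \ref{R0}, \ref{R1} and \ref{R2} it follows the paper's own argument essentially verbatim: both verify \ref{R2} by forming the pullback in $\EE$, invoking \Cref{Proposition:LocalizationPaperProposition3.7} to place the pullback object in a conflation $P\inflation Y\oplus Z'\deflation Z$, and concluding $P\in\AA$ from deflation-closedness; likewise both dispose of \ref{R3+} by the same one-line reduction. The genuine difference is the \ref{R3} case: the paper does not prove it in-text but cites \cite[Proposition~3.11]{HenrardvanRoosmalen19a}, whereas you supply a complete self-contained argument. Your argument is sound: you correctly identify that the hypothesis of \ref{R3} in $\AA$ only provides an $\AA$-kernel $k\colon K\to B$, which is not a priori a kernel in $\EE$; you then pull the deflation $pi$ back along $p$ to get $P\in\AA$ (again via \Cref{Proposition:LocalizationPaperProposition3.7} and deflation-closedness), use the $\AA$-kernel property of $k$ together with the universal property of $P$ to exhibit $P\cong A\oplus K$ with $\iota=\ker(r)$, and then the Pullback Lemma applied to the pasted rectangle over $0\to A\to C$ shows $k=q\iota$ is a kernel of $p$ in $\EE$, so \ref{R3} in $\EE$ applies and deflation-closedness finishes. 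This buys a proof that does not depend on the external reference, at the cost of a page of diagram-chasing; the only cosmetic slip is the phrase ``pullback of $p$ along $pi$,'' which should read ``pullback of the deflation $pi$ along $p$'' since $p$ is not yet known to be a deflation, but your subsequent use of the square makes clear this is what you meant.
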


\begin{proof}
	Axioms \ref{R0} and \ref{R1} follow directly from the definition. To show axiom \ref{R2}, let $X\inflation Y\deflation Z$ be a conflation in $\AA$ and let $A\to Z$ be any map in $\AA$. Applying axiom \ref{R2} in $\EE$, we obtain the following commutative diagram:
	\[\xymatrix{
		X\ar@{>->}[r]\ar@{=}[d] & P\ar@{->>}[r]\ar[d] & A\ar[d]\\
		X\ar@{>->}[r] & Y\ar@{->>}[r] & Z
	}\] By \Cref{Proposition:LocalizationPaperProposition3.7}, the induced kernel-cokernel pair $P\inflation A\oplus Y\deflation Z$ is a conflation in $\EE$. As $\AA\subseteq \EE$ is deflation-closed, it follows that $P\in \AA$ as required.
	
	That axiom \ref{R3+} is preserved is straightforward to show. That axiom \ref{R3} is preserved is shown in \cite[Proposition~3.11]{HenrardvanRoosmalen19a}.
\end{proof}

If $\AA\subseteq \EE$ is deflation-closed, \Cref{Proposition:DeflationClosedInheritsDeflationExactStructure} endows $\AA$ with a natural deflation-exact structure; in this case, the derived category $\D(\AA)$ has been defined in \cite{BazzoniCrivei13}. It is clear that $\Ac(\AA)\subseteq \Ac(\EE)$; however, in general, $\Ac(\AA)\neq \C(\AA)\cap \Ac(\EE)$. Thus, a complex $A^{\bullet}\in \C(\AA)$ which is acyclic in $\EE$ need not be acyclic in $\AA$, see \Cref{Remark:UnboundedAcyclicIsAnnoying} below. For this reason, we make the following distinction.
                   
\begin{definition}
	As in \Cref{Convention:QuasiIsomorphism}, let $\SS$ denote the collection of morphisms $f^{\bullet}\in \Hom_{\C(\EE)}(X^{\bullet},Y^{\bullet})$ such that $\cone(f^{\bullet})\in \Ac(\EE)$. If $\AA\subseteq \EE$ is a deflation-closed subcategory, we write $\TT$ for those morphisms $f^{\bullet}\in \Hom_{\C(\AA)}(A^{\bullet},B^{\bullet})$ such that $\cone(f^{\bullet})\in \Ac(\AA)$. The bounded variants $\TT^-,\TT^\mathsf{b},\SS^-$ and $\SS^\mathsf{b}$ are defined analogously.
\end{definition}

\begin{remark}
If $\AA\subseteq \EE$ is deflation-closed, and hence deflation-exact by \Cref{Proposition:DeflationClosedInheritsDeflationExactStructure}, then $\D(\AA)=\K(\AA)[\TT^{-1}]$.
\end{remark}

\begin{proposition}\label{Proposition:AcyclicComplexesOfDeflationClosedSubcategoryCoincide}
	Let $\AA\subseteq \EE$ be a deflation-closed subcategory. Then $\Acm(\EE)\cap\Cm(\AA)=\Acm(\AA)$. In particular, $\TT^{-}=\SS^{-}\cap\Mor(\Km(\AA))$ and $\TT^\mathsf{b}=\SS^{\mathsf{b}}\cap \Mor(\Kb(\AA))$.
\end{proposition}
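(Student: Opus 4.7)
The inclusion $\Acm(\AA)\subseteq \Acm(\EE)\cap\Cm(\AA)$ is immediate from \Cref{Proposition:DeflationClosedInheritsDeflationExactStructure}, since a conflation in $\AA$ is by definition a conflation in $\EE$ whose three terms lie in $\AA$. For the reverse inclusion, the plan is to take $A^{\bullet}\in \Cm(\AA)\cap \Acm(\EE)$, fix an integer $N$ with $A^i=0$ for $i>N$, and show by downward induction on $i\leq N$ that the cycle object $Z^i := \ker_{\EE}(d^i)\cong \im_{\EE}(d^{i-1})$ lies in $\AA$.

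The base case is immediate: since $A^{N+1}=0$, acyclicity in $\EE$ forces $Z^N = A^N \in \AA$. For the induction step, acyclicity in $\EE$ yields a conflation $Z^i\inflation A^i\deflation Z^{i+1}$ in $\EE$. By the inductive hypothesis $Z^{i+1}\in \AA$ and by assumption $A^i\in \AA$, so the deflation-closedness of $\AA$ in $\EE$ forces $Z^i\in \AA$. Once every $Z^i$ is known to lie in $\AA$, the conflations $Z^i\inflation A^i\deflation Z^{i+1}$ are conflations in $\AA$ by \Cref{Proposition:DeflationClosedInheritsDeflationExactStructure}. Splicing these gives a deflation-inflation factorization of each $d^i$ in $\AA$ with $\ker_{\AA}(d^{i+1})\cong Z^{i+1}\cong \im_{\AA}(d^i)$, and thus $A^{\bullet}\in \Acm(\AA)$.

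For the ``in particular'' statement, let $f^{\bullet}$ be a morphism in $\Km(\AA)$; then $\cone(f^{\bullet})\in \Cm(\AA)$. Consequently $f^{\bullet}\in \TT^{-}$ iff $\cone(f^{\bullet})\in \Acm(\AA)$, whereas $f^{\bullet}\in \SS^{-}\cap \Mor(\Km(\AA))$ iff $\cone(f^{\bullet})\in \Acm(\EE)\cap \Cm(\AA)$; the first part of the proposition makes these conditions equivalent. The bounded identity $\TT^{\mathsf{b}}=\SS^{\mathsf{b}}\cap \Mor(\Kb(\AA))$ follows by restricting to bounded cones.

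The main obstacle is the base case of the induction: this is precisely where right-boundedness is used, as it provides the starting $Z^N = A^N \in \AA$ from which deflation-closedness can propagate. Without an upper bound on the complex, the argument has no anchor, which is precisely the pathology alluded to in \Cref{Remark:UnboundedAcyclicIsAnnoying} and the reason one cannot naively upgrade this result to $\Ac(\EE)\cap \C(\AA)=\Ac(\AA)$.
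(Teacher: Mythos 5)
Your proof is correct and follows essentially the same route as the paper's: the forward inclusion via exactness of the embedding from \Cref{Proposition:DeflationClosedInheritsDeflationExactStructure}, and the reverse inclusion by propagating membership of the cycle objects in $\AA$ from the top degree downward using deflation-closedness, which is exactly the paper's iterative argument phrased as a downward induction. Your closing observation that the top-degree anchor is where right-boundedness enters, and why the unbounded statement fails, matches \Cref{Remark:UnboundedAcyclicIsAnnoying}.
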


\begin{proof}
	Let $A^{\bullet}\in \Cm(\AA)$. Without loss of generality, assume that $A^{k}=0$ for all $k>0$. It suffices to show that $A^{\bullet}$ is acyclic in $\AA$ if and only if $A^{\bullet}$ is acyclic in $\EE$. 
	
	If $A^{\bullet} \in \Ac(\AA)$, then we have that $A^\bullet \in \Ac(\EE)$ (as \Cref{Proposition:DeflationClosedInheritsDeflationExactStructure} yields that the embedding $\AA\to \EE$ is conflation-exact).  Conversely, assume that $A^{\bullet} \in \Ac(\EE)$. In this case, $d_A^{-1}\colon A^{-1}\deflation A^{0}$ is a deflation and hence its kernel $K^{-1}\inflation A^{-1}$ belongs to $\AA$ as $\AA\subseteq \EE$ is deflation-closed. Moreover, since the complex $A^\bullet$ is acyclic, we have $K^{-1}\cong \im(d_A^{-2})$. Repeating this argument on the deflation $A^{-2}\deflation \im(d_A^{-2})$, one finds that $\im(d_A^{-3})\in \AA$.  Proceeding in this fashion yields that $A^{\bullet}$ is acyclic in $\AA$.
\end{proof}

\begin{remark}\label{Remark:UnboundedAcyclicIsAnnoying}
	\Cref{Proposition:AcyclicComplexesOfDeflationClosedSubcategoryCoincide} does not need to hold for unbounded complexes. Indeed, consider the category $\EE=\smod(A)$ of finitely generated $k$-modules over the $k$-algebra $A=k[X]/(X^2)$ and let $\AA = \add(\EE)$ be the full additive subcategory generated by $A$. The complex 
	\[\dots \xrightarrow{\cdot X} A\xrightarrow{\cdot X} A \xrightarrow{\cdot X} A \xrightarrow{\cdot X} \dots\]
	belongs to $\AA$ and is acyclic in $\EE$. Clearly, the images of the differentials do not belong to $\AA$ and thus the above sequence is not acyclic in $\AA$.
\end{remark}

\subsection{Induced bounded derived equivalences}

The goal of this section is to prove the following theorem. 

\begin{theorem}\label{Theorem:PreResolvingDerivedEquivalence}
	Let $\AA\subseteq \EE$ be a full additive subcategory and write $\phi\colon \AA\hookrightarrow \EE$ for the embedding. 
	\begin{enumerate}
		\item If $\AA\subseteq \EE$ is preresolving, the functor $\phi$ lifts to a triangle equivalence $\Phi\colon \Dm(\AA)\to \Dm(\EE)$.
		\item	If $\AA\subseteq \EE$ is finitely preresolving, the functor $\phi$ lifts to a triangle equivalence $\Phi\colon \Db(\AA)\to \Db(\EE)$.
	\end{enumerate}
\end{theorem}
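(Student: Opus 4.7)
My plan is to reduce both statements to a single resolution lemma combined with standard calculus-of-fractions arguments. By Proposition~\ref{Proposition:DeflationClosedInheritsDeflationExactStructure} the embedding $\phi$ is conflation-exact, and by Proposition~\ref{Proposition:AcyclicComplexesOfDeflationClosedSubcategoryCoincide} one has $\TT^{-}=\SS^{-}\cap\Mor(\Km(\AA))$ and $\TT^{\mathsf{b}}=\SS^{\mathsf{b}}\cap\Mor(\Kb(\AA))$; consequently $\phi$ induces well-defined triangulated functors $\Phi\colon \Dm(\AA)\to \Dm(\EE)$ and $\Phi\colon\Db(\AA)\to\Db(\EE)$.

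The key lemma I would establish is: \emph{for every $E^{\bullet}\in \Cm(\EE)$ there exist $A^{\bullet}\in \Cm(\AA)$ and a quasi-isomorphism $A^{\bullet}\to E^{\bullet}$.} Assuming $E^{i}=0$ for $i>0$, I construct $A^{\bullet}$ by descending induction on degree. At degree $0$, I apply \ref{PR2} to obtain a deflation $A^{0}\deflation E^{0}$ with $A^{0}\in \AA$. Having built $A^{0},\ldots, A^{-n+1}$ with chain map components to $E^{\bullet}$ and a running pullback object in $\EE$ representing the cocycle of the partial mapping cone at the current degree, I form the next pullback in $\EE$ using axiom \ref{R2} (combining the deflation just constructed with the differential from $E^{\bullet}$) and apply \ref{PR2} to cover this pullback by $A^{-n}\in \AA$. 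The components $A^{-n}\to E^{-n}$ and $d_A^{-n}$ are read off from the defining deflations, and that the final mapping cone is acyclic in $\EE$ is verified degreewise using Proposition~\ref{Proposition:LocalizationPaperProposition3.7} and Lemma~\ref{Lemma:ConeOfAcyclic}. Given this lemma, essential surjectivity of $\Phi$ in (1) is immediate. For fullness, a morphism in $\Dm(\EE)$ between $A^{\bullet},B^{\bullet}\in \Cm(\AA)$ is represented by a roof $A^{\bullet}\xleftarrow{s}X^{\bullet}\to B^{\bullet}$ with $s\in \SS^{-}$; applying the lemma to $X^{\bullet}$ produces $C^{\bullet}\in\Cm(\AA)$ and a quasi-isomorphism $C^{\bullet}\to X^{\bullet}$. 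The composite $C^{\bullet}\to A^{\bullet}$ lies in $\TT^{-}$ by Proposition~\ref{Proposition:AcyclicComplexesOfDeflationClosedSubcategoryCoincide}, so the roof descends to a roof in $\Dm(\AA)$. Faithfulness is the analogous argument applied to a morphism that becomes zero in $\Dm(\EE)$.

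For (2), full faithfulness of $\Db(\AA)\to\Db(\EE)$ follows from (1) together with the fact that the evident comparison functor $\Db\hookrightarrow\Dm$ is fully faithful on both sides, so it suffices to verify essential surjectivity: every $E^{\bullet}\in \Cb(\EE)$ is quasi-isomorphic in $\Dm(\EE)$ to an object of $\Cb(\AA)$. I would prove this by induction on the amplitude of $E^{\bullet}$. The base case (amplitude zero, i.e.\ a single object) is exactly the hypothesis $\resdim_{\AA}(E)<\infty$ for all $E\in\EE$. For the inductive step, the triangle in $\Db(\EE)$ induced by the stupid truncation $0\to \sigma^{<k}E^{\bullet}\to E^{\bullet}\to E^{k}[-k]\to 0$ allows one to resolve both ends by the inductive hypothesis, lift the connecting morphism through the equivalence $\Phi$ of part (1), and form its cone in $\Kb(\AA)$, yielding a bounded complex in $\AA$ quasi-isomorphic to $E^{\bullet}$.

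The main obstacle I expect is the resolution lemma itself. In a deflation-exact category one cannot freely form kernels of arbitrary morphisms, so the classical abelian construction must be reorganised as a sequence of pullbacks along deflations (using axiom \ref{R2}) followed by \ref{PR2}-coverings. The bookkeeping of verifying at each inductive step that the relevant map is a deflation, that the partial-cone cocycle is correctly modelled as a pullback in $\EE$, and that Proposition~\ref{Proposition:LocalizationPaperProposition3.7} applies to identify the resulting mapping cone as acyclic, is the most delicate part of the argument.
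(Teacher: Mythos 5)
Your part (1) follows the paper's own route almost exactly: the same replacement lemma (the paper's Lemma~\ref{Lemma:ReplacementTechnique}, built by alternating \ref{R2}-pullbacks with \ref{PR2}-coverings and checking acyclicity of the cone via \Cref{Proposition:LocalizationPaperProposition3.7}), the same use of \Cref{Proposition:AcyclicComplexesOfDeflationClosedSubcategoryCoincide} to identify $\TT^-$ with $\SS^-\cap\Mor(\Km(\AA))$, and your roof argument for full faithfulness is just an unfolding of the citation of \cite[Proposition~7.2.1(ii)]{KashiwaraSchapira06} that the paper uses. Where you genuinely diverge is part (2). The paper proves a \emph{bounded} version of the replacement lemma directly: one runs the same pullback construction and, once the construction passes the left end of $E^{\bullet}$, one is free to splice in a finite $\AA$-resolution of the remaining pullback object (which exists since $\resdim_{\AA}<\infty$ everywhere), so the output complex is bounded; full faithfulness is then again \cite[Proposition~7.2.1(ii)]{KashiwaraSchapira06} together with the bounded part of \Cref{Proposition:AcyclicComplexesOfDeflationClosedSubcategoryCoincide}. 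You instead deduce full faithfulness of $\Db(\AA)\to\Db(\EE)$ from part (1) via the comparison functors $\Db\hookrightarrow\Dm$, and prove essential surjectivity by dévissage on amplitude using stupid-truncation triangles and cones of lifted connecting maps. This is a valid alternative and has the appeal of isolating the finiteness hypothesis in the base case (a single object), but it is not free: it imports the statement that $\Db(-)\to\Dm(-)$ is fully faithful for a deflation-exact category, a nontrivial fact which this paper never states or proves (it is available in \cite{HenrardvanRoosmalen19b}, but for one-sided exact categories it requires its own truncation argument). The paper's direct construction of a bounded replacement avoids that input entirely and keeps the bounded case as elementary as the right-bounded one, at the cost of the small bookkeeping step of terminating the resolution. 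You should also double-check your truncation conventions (for cochain complexes it is $\sigma^{\geq k}E^{\bullet}$ that is the subcomplex), though this does not affect the substance of the argument.
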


That the above functors are essentially surjective is settled by the following lemma.

\begin{lemma}\label{Lemma:ReplacementTechnique}
	Let $\AA\subseteq \EE$ be a full additive subcategory satisfying axiom \ref{PR2}.
	\begin{enumerate}
		\item For each $E^{\bullet}\in \Cm(\EE)$, there exists a complex $A^{\bullet}\in \Cm(\AA)$ and a map $f^{\bullet}\colon A^{\bullet}\to E^{\bullet}$ in $\Cm(\EE)$ such that $f^{\bullet}\in \SS^{-}$.
		\item Assume that $\resdim_{\AA}(E)<\infty$ for all $E\in \EE$.  For each $E^{\bullet}\in \Cb(\EE)$, there exists a complex $A^{\bullet}\in \Cb(\AA)$ and a map $f^{\bullet}\colon A^{\bullet}\to E^{\bullet}$ in $\Cb(\EE)$ such that $f^{\bullet}\in \SS^\mathsf{b}$.
	\end{enumerate}
\end{lemma}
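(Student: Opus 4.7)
For (1), I would proceed by descending induction on degree, building $A^\bullet \in \Cm(\AA)$ together with a chain map $f^\bullet\colon A^\bullet \to E^\bullet$ whose mapping cone is acyclic in $\EE$. Reducing to $E^k = 0$ for $k > N$ and setting $A^k = 0$ above degree $N$, the base step is to apply \ref{PR2} to obtain a deflation $f^N\colon A^N \deflation E^N$ with $A^N \in \AA$.

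At the inductive step, assume $A^i, d^i_A, f^i$ have been constructed for $i \geq n+1$ so that the truncation of $\cone(f^\bullet)$ above degree $n$ is acyclic in $\EE$. This inductive hypothesis ensures that the cone differential $d^n_C\colon A^{n+1} \oplus E^n \to A^{n+2} \oplus E^{n+1}$ is admissible with a well-defined cycle object $Z^n := \ker(d^n_C) \in \EE$. At the top, $Z^n$ is identified with the pullback $P^n = A^{n+1} \times_{E^{n+1}} E^n$, which exists by \ref{R2} and, via \Cref{Proposition:LocalizationPaperProposition3.7}, embeds into $A^{n+1} \oplus E^n$ as the kernel of a deflation onto $E^{n+1}$. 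Applying \ref{PR2}, I would pick a deflation $\sigma\colon A^n \deflation Z^n$ with $A^n \in \AA$ and define $d^n_A$ and $f^n$ as the compositions with the two projections $Z^n \hookrightarrow A^{n+1} \oplus E^n$; the chain-map identity is automatic from the pullback, and the image of the newly formed $d^{n-1}_C$ becomes exactly $Z^n$, restoring acyclicity one degree further down.

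The main obstacle is verifying, at each inductive step, that the new cone differential remains admissible and that $Z^n$ exists and is concretely described as a subobject of $A^{n+1} \oplus E^n$. This requires iterating pullbacks via \ref{R2} and repeatedly invoking \Cref{Proposition:LocalizationPaperProposition3.7} to keep track of inflations and deflations as new stages are added. Below the support of $E^\bullet$, the cycles $Z^n$ reduce to kernels of the previously chosen deflation $\sigma$, so the construction simplifies to iteratively resolving such kernels by objects of $\AA$ via \ref{PR2}, which is routine.

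For (2), finiteness of $\resdim_\AA(E^i)$ for each $E^i$ together with the boundedness of $E^\bullet$ gives a common upper bound $d = \max_i \resdim_\AA(E^i) < \infty$. The cycles $Z^n$ produced in (1) satisfy bounds on their $\AA$-resolution dimensions controlled by $d$ and the support length of $E^\bullet$, via horseshoe-style tracking through pullbacks and kernels; in particular, once $n$ is sufficiently negative, the hypothesis forces $Z^n \in \AA$. At that point I would terminate the construction by setting $A^n := Z^n$ with the canonical inflation as its differential and $A^i := 0$ for $i < n$, yielding $A^\bullet \in \Cb(\AA)$ with $f^\bullet \in \SS^{\mathsf{b}}$.
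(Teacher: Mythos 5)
Your argument for part (1) is essentially the paper's proof: the paper also builds $A^\bullet$ degreewise from the right by pulling back the next differential along the deflation just constructed (axiom \ref{R2}), deflating onto that pullback via \ref{PR2}, and using \Cref{Proposition:LocalizationPaperProposition3.7} to see that each modification has acyclic cone; your cycle object $Z^n$ of the cone is exactly the paper's pullback $P^n$, and whether one phrases this as a tower of elementary quasi-isomorphisms stabilizing degreewise (the paper) or as a single induction maintaining acyclicity of the cone (you) is a matter of packaging.

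Part (2), however, has a genuine gap. You claim that the cycles $Z^n$ produced by the construction of (1) have controlled $\AA$-resolution dimension and that ``once $n$ is sufficiently negative, the hypothesis forces $Z^n\in\AA$.'' Under the stated hypothesis --- only \ref{PR2}, with $\AA$ not assumed deflation-closed --- this is unjustified. The statement $\resdim_\AA(E)\le k$ only asserts the \emph{existence} of one finite resolution; it does not imply that the syzygies of an \emph{arbitrary} partial $\AA$-resolution built by unconstrained applications of \ref{PR2} eventually land in $\AA$. That kind of syzygy comparison is precisely the content of \Cref{lemma:ResolutionLength} and \Cref{proposition:ResolutionLength}, which require $\AA$ to be preresolving (so also \ref{PR1}) and, even then, only conclude membership of the syzygy for a suitably \emph{modified} resolution, not for the one you happen to have built. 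The repair is to make specific choices rather than argue that arbitrary ones terminate: once the construction has passed below the support of the bounded complex $E^\bullet$, the remaining task is to resolve the single object $P^{m-1}=\ker(A^m\deflation P^m)\in\EE$; by hypothesis $\resdim_\AA(P^{m-1})<\infty$, so you may \emph{choose} a finite $\AA$-resolution of it and splice it onto $A^m\to\cdots\to A^0$ via $B^{m-1}\deflation P^{m-1}\inflation A^m$. This yields the desired bounded complex and uses nothing beyond \ref{PR2} and the finiteness assumption, which is presumably what the paper's ``the bounded case is similar'' intends.
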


\begin{proof}
	We only show the right bounded case as the bounded case is similar.
	
	Let $E\in \Cm(\EE)$. Without loss of generality we assume that $E^{i}=0$ for all $i>1$. By axiom \ref{PR2}, there exists a deflation $A\deflation E^0$ with $A\in \AA$. By axiom \ref{R2}, the pullback $P^{-1}$ of $d_E^{-1}\colon E^{-1}\to E^0$ along $A\deflation E^0$ exists. By the pullback property, we obtain the following commutative diagram:
	\[\xymatrix{
		A_1^{\bullet}\ar[d]^{f_1^{\bullet}} && \dots \ar[r]&E^{-2}\ar[r]\ar@{=}[d]&P^{-1}\ar[r]\ar@{->>}[d]&A^0\ar@{->>}[d]\ar[r]&0\ar[d]\ar[r]&\dots\\
		E^{\bullet} && \dots \ar[r]&E^{-2}\ar[r]&E^{-1}\ar[r]&E^0\ar[r]&0\ar[r]&\dots\\
	}\]
	By \Cref{Proposition:LocalizationPaperProposition3.7}, $\cone(f_1^{\bullet})\in \Ac(\EE)$ and thus $f_1^{\bullet}$ is a quasi-isomorphism. Similarly, by axiom \ref{PR2}, there is a deflation $A^{-1}\deflation P^{-1}$. Taking the pullback $P^{-2}$ along $E^{-2}\to P^{-1}$, we obtain the following commutative diagram:
		\[\xymatrix{
		A_2^{\bullet}\ar[d]^{f_2^{\bullet}} && \dots \ar[r]&E^{-3}\ar[r]\ar@{=}[d]&P^{-2}\ar[r]\ar@{->>}[d]&A^{-1}\ar@{->>}[d]\ar[r]&A^{0}\ar@{=}[d]\ar[r]&0\ar[r]&\dots\\
		A_1^{\bullet} && \dots \ar[r]&E^{-3}\ar[r]&E^{-2}\ar[r]&P^{-1}\ar[r]&A^0\ar[r]&0\ar[r]&\dots\\
	}\] As above, $f_2^{\bullet}$ is a quasi-isomorphism. Iterating this procedure, one constructs a complex $A^{\bullet}$ from right to left; moreover, the natural map $f^{\bullet}\colon A^{\bullet}\to E^{\bullet}$ belongs to $\SS^-$ as one can check degreewise.	
\end{proof}

We are now in a position to prove \Cref{Theorem:PreResolvingDerivedEquivalence}.

\begin{proof}[Proof of \Cref{Theorem:PreResolvingDerivedEquivalence}]
	We only prove the first statement, the second is similar.
	
	As $\AA\subseteq \EE$ is deflation-closed, $\AA$ inherits a deflation-exact structure from $\EE$ by \Cref{Proposition:DeflationClosedInheritsDeflationExactStructure}. Hence $\phi\colon \AA \to \EE$ is an exact functor and thus lifts to a triangle functor $\Phi\colon \Dm(\AA)\to \Dm(\EE)$. 
	
	That $\Phi$ is essentially surjective follows from \Cref{Lemma:ReplacementTechnique}. By \Cref{Lemma:ReplacementTechnique} and \Cref{Proposition:AcyclicComplexesOfDeflationClosedSubcategoryCoincide}, the conditions of (the dual of) \cite[Proposition~7.2.1 (ii)]{KashiwaraSchapira06} are satisfied and thus $\Phi$ is fully faithful.
\end{proof}

\begin{remark}
	That $\Phi$ is fully faithful also follows from \cite[Proposition~10.2.7]{KashiwaraSchapira06} or \cite[Lemma in section 1.6]{Positselski11}. 
\end{remark}

\begin{remark}\label{remark:Comparison}
	\Cref{Theorem:PreResolvingDerivedEquivalence} is akin to \cite[Lemma~3.8]{KernerStovicekTrlifaj11} (or \cite[Proposition~13.2.2]{KashiwaraSchapira06}), i.e.~given an abelian category $\EE$ and a full additive subcategory $\AA\subseteq \EE$ such that every object of $\EE$ admits a finite $\AA$-resolution, the embedding lifts to a triangle equivalence $\Db(\AA;\EE)\to \Db(\EE)$. Here $\Db(\AA;\EE)$ is the derived category of $\AA$ relative to $\EE$ (see \cite[Definition~3.6]{KernerStovicekTrlifaj11}), that is, $\Db(\AA;\EE) \coloneqq \Kb(\AA)/\Acb(\AA;\EE)$ where $\Acb(\AA;\EE) \coloneqq \Kb(\AA)\cap \Acb(\EE)$ are the bounded cochain complexes in $\AA$ which are acyclic in $\EE$. If $\AA\subseteq \EE$ is finitely preresolving, \Cref{Proposition:AcyclicComplexesOfDeflationClosedSubcategoryCoincide} yields that $\Db(\AA;\EE)\simeq \Db(\AA)$ are triangle equivalent. As such, we recover \cite[Lemma~3.8]{KernerStovicekTrlifaj11}.
\end{remark}

\begin{remark}
The bounded derived $\infty$-category of a one-sided exact category is defined in \cite{HenrardvanRoosmalen19b}, following \cite{AntieauGepnerHeller19}.  Given an preresolving subcategory $\AA$ of a (one-sided) exact category $\EE$, one obtains a functor $\Phi_\infty\colon \Dbinf(\AA) \to \Dbinf(\EE)$ of stable $\infty$-categories.  If $\AA$ is finitely preresolving in $\EE$, then $\Phi_\infty$ is an equivalence (this follows \Cref{Theorem:PreResolvingDerivedEquivalence}, using \cite[Theorem I.3.3]{NikolausScholze18} or \cite[Proposition 5.14]{BlumbergGepnerTabuada13}).  In particular, for any additive or localizing invariant $\bK$ (in the sense of \cite{BlumbergGepnerTabuada13}, such as non-connective $K$-theory) we have $\bK(\Dbinf(\AA)) \cong \bK(\Dbinf(\EE)).$
\end{remark}

\section{The resolution dimension and the unbounded derived equivalence}

Throughout this section, $\EE$ denotes a deflation-exact category. The goal of this section is to prove \Cref{Theorem:UnboundedMainTheorem} below, which is the unbounded analogue of \Cref{Theorem:PreResolvingDerivedEquivalence}. The strategy of the proof is identical to the proof of \Cref{Theorem:PreResolvingDerivedEquivalence}. \Cref{Lemma:ReplacementTechnique} directly generalizes to the unbounded setting with the additional assumption that $\resdim_{\AA}(\EE)<\infty$ (see \Cref{Lemma:UnboundedEssentialSurjectivity} below). However, \Cref{Proposition:AcyclicComplexesOfDeflationClosedSubcategoryCoincide} does not directly carry over to the unbounded setting. Nonetheless, \Cref{Corollary:DetectingAcyclicComplexesWithTermsInAAInD(A)} below provides a suitable generalization such that the proof of \Cref{Theorem:PreResolvingDerivedEquivalence} carries over verbatim.

\begin{theorem}\label{Theorem:UnboundedMainTheorem}
	Let $\AA\subseteq \EE$ be a uniformly preresolving subcategory. The embedding $\phi\colon \AA\to \EE$ lifts to a triangle equivalence $\D(\AA)\to \D(\EE)$.
\end{theorem}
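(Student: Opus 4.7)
The plan is to follow the proof of \Cref{Theorem:PreResolvingDerivedEquivalence} verbatim, once we establish unbounded analogues of its two key ingredients, namely \Cref{Lemma:ReplacementTechnique} and \Cref{Proposition:AcyclicComplexesOfDeflationClosedSubcategoryCoincide}.  Since $\AA$ is in particular deflation-closed, \Cref{Proposition:DeflationClosedInheritsDeflationExactStructure} endows $\AA$ with a deflation-exact structure for which $\phi$ is conflation-exact, so $\Phi\colon \D(\AA) \to \D(\EE)$ is well-defined as a triangle functor.  It remains to produce essential surjectivity and fully faithfulness.

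For essential surjectivity, I would construct, for every $E^{\bullet} \in \C(\EE)$, a quasi-isomorphism $A^{\bullet} \to E^{\bullet}$ with $A^{\bullet} \in \C(\AA)$.  Set $n := \resdim_{\AA}(\EE) < \infty$.  The pullback construction of \Cref{Lemma:ReplacementTechnique} does not terminate for unbounded $E^{\bullet}$, but one can apply it in parallel at every degree: choose deflations $A^i \deflation E^i$ with $A^i \in \AA$ and assemble the pullbacks into a complex $T_1^{\bullet}$ together with a quasi-isomorphism $T_1^{\bullet} \to E^{\bullet}$ whose kernel is degreewise an object of $\EE$ but of strictly smaller resolution dimension.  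Iterating this $n+1$ times forces the remaining defect into $\AA$ by the uniform bound, yielding the desired $A^{\bullet}$.  Equivalently, one can form a Cartan--Eilenberg style total complex of length-$n$ $\AA$-resolutions of the $E^i$; the bound on the column length ensures that each degree of the total complex is a finite direct sum and that the resulting complex lies in $\C(\AA)$.

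The main obstacle is to establish the corollary replacing \Cref{Proposition:AcyclicComplexesOfDeflationClosedSubcategoryCoincide} in the unbounded setting: given $A^{\bullet} \in \C(\AA)$ acyclic in $\EE$, one must show that $A^{\bullet}$ becomes isomorphic to zero in $\D(\AA)$.  The literal statement $\Ac(\EE)\cap\C(\AA) = \Ac(\AA)$ fails without the uniform bound, as \Cref{Remark:UnboundedAcyclicIsAnnoying} demonstrates, so the bound must genuinely enter.  Fix an index $i$: the syzygy $K^i := \im(d_A^{i-1})$ lies in $\EE$ with $\resdim_{\AA}(K^i) \leq n$, and a truncation of $A^{\bullet}$ yields an exact sequence $0 \to K^{i-n-1} \to A^{i-n-1} \to \cdots \to A^{i-1} \to K^i \to 0$ in $\EE$.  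A careful comparison of this ``long'' partial resolution with a length-$n$ $\AA$-resolution of $K^i$, combined with the deflation-closure of $\AA$ propagating membership from right to left, should force the syzygies to lie in $\AA$; the delicacy is that standard Schanuel-type arguments rely on extension-closure, which is precisely what preresolving subcategories lack, so a one-sided argument tailored to this setting must be devised.

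With both ingredients in place, the proof of \Cref{Theorem:PreResolvingDerivedEquivalence} carries over unchanged: the unbounded replacement lemma gives essential surjectivity, while (the dual of) \cite[Proposition~7.2.1(ii)]{KashiwaraSchapira06} combined with the detection corollary yields fully faithfulness, concluding the proof that $\Phi\colon \D(\AA) \to \D(\EE)$ is a triangle equivalence.
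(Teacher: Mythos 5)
Your overall architecture (essential surjectivity plus a detection statement feeding into \cite[Proposition~7.2.1]{KashiwaraSchapira06}) matches the paper's, but the crucial second ingredient is where your plan breaks down. You propose to show that for $A^{\bullet}\in \C(\AA)\cap \Ac(\EE)$ the syzygies $\im(d_A^{i})$ themselves lie in $\AA$, i.e.\ that $\Ac(\EE)\cap\C(\AA)=\Ac(\AA)$ holds once $\resdim_\AA(\EE)<\infty$. You correctly note that Schanuel-type arguments need extension-closure, but you leave the replacement argument as something that ``must be devised'' --- and the paper does not devise one, because this stronger statement is only established under extra hypotheses (closure under normal subobjects, or extension-closure plus \ref{R3}; see \Cref{Proposition:UnboundedTheoremWithAdditionalAssumptions}). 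What the paper actually proves (\Cref{Corollary:DetectingAcyclicComplexesWithTermsInAAInD(A)}, via \Cref{lemma:ResolutionLength} and \Cref{proposition:ResolutionLength}) is strictly weaker and suffices: for each $l$ one only gets $\im(d_E^{l})\oplus B^{l+1}\in\AA$ for a suitable $B^{l+1}\in\AA$, i.e.\ $\resdim_\AA(\im(d_E^l))\le 1$; one then adds a null-homotopic complex $C^{\bullet}\in\C(\AA)$ so that $C^{\bullet}\oplus E^{\bullet}\in\Ac(\AA)$, which still forces $E^{\bullet}\cong 0$ in $\D(\AA)$. This ``correct up to a contractible direct summand'' manoeuvre is the essential idea your proposal is missing, and without it the fully-faithfulness step does not go through.

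The essential surjectivity sketch also needs repair, though the underlying intuition is right. Choosing deflations $A^i\deflation E^i$ ``in parallel at every degree'' does not assemble into a complex over $E^{\bullet}$ (there is no compatibility with the differentials), and a Cartan--Eilenberg total complex of $\AA$-resolutions of the $E^i$ requires Horseshoe-type lifting along the differentials, which is unavailable for a general preresolving $\AA$. The paper's \Cref{Lemma:UnboundedEssentialSurjectivity} instead proceeds sequentially: starting from the right-bounded replacement of \Cref{Lemma:ReplacementTechnique}, it pushes the interface between the $\AA$-part and the $\EE$-part one degree to the right using an $n$-step correction built from a length-$n$ resolution of $E^{1}$ (splitting off $\im(d_C^{-1})\oplus C^0$ and shortening the resolution at each step); the uniform bound $\resdim_\AA(\EE)=n$ guarantees that each correction only disturbs a window of $n+1$ degrees, so the resulting tower $E^{\bullet}\leftarrow A_1^{\bullet}\leftarrow A_2^{\bullet}\leftarrow\cdots$ stabilizes degreewise. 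You should make this stabilization explicit rather than appeal to a total-complex construction.
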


The embedding $\AA \to \EE$ lifts to a triangle functor $\D(\AA)\to \D(\EE)$ by \Cref{Proposition:DeflationClosedInheritsDeflationExactStructure}.  The first step of our proof is to establish that $\Phi$ is essentially surjective.  This will be done in \Cref{Lemma:UnboundedEssentialSurjectivity} below.  The proof is an adaptation of \cite[Lemma I.4.6]{Hartshorne66} to the (one-sided) exact setting.% The proof is akin to \Cref{Lemma:ReplacementTechnique} but exploits the additional assumption that $\resdim(\EE)<\infty$.

\begin{lemma}\label{Lemma:UnboundedEssentialSurjectivity}
	Let $\AA\subseteq \EE$ be a preresolving subcategory with $\resdim_\AA (\EE) = n \in \bN$. Let $E^{\bullet}\in \C(\EE)$. There exists a map $f^{\bullet}\colon A^{\bullet}\to E^{\bullet}$ in $\K(\EE)$ such that $A^{\bullet}\in \C(\AA)$ and $f^{\bullet}\in \SS$.
\end{lemma}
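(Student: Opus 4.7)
The plan is to adapt the construction of a Cartan--Eilenberg resolution from \cite[Lemma I.4.6]{Hartshorne66} to the deflation-exact setting. The key point is that, since $\resdim_{\AA}(\EE)\le n$ is uniform, each ``column'' of the Cartan--Eilenberg bicomplex can be chosen of length at most $n+1$, so the total complex has only finitely many nonzero summands in each degree and produces a genuine object of $\C(\AA)$.

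First, for each $k\in\bZ$ I would use $\resdim_{\AA}(E^k)\le n$ to fix an $\AA$-resolution
\[
0\to A^{-n,k}\to A^{-n+1,k}\to\cdots\to A^{0,k}\to E^k\to 0
\]
in $\EE$. Second, I would inductively construct horizontal differentials $d_h\colon A^{i,k}\to A^{i,k+1}$ so that $(A^{\bullet,\bullet},d_v,d_h)$ becomes a bicomplex and the top row $A^{0,\bullet}\to E^\bullet$ is a chain map covering the augmentation. In an abelian category with enough projectives this would be a routine lift of $d_E^k$ using projectivity of $A^{0,k}$; in the deflation-exact setting I would instead build the lifts by iterated pullback, reusing the technique of \Cref{Lemma:ReplacementTechnique} column by column (possibly having to replace each chosen column by a modified resolution of the same length to accommodate the lifts). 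Third, I would set $A^\bullet:=\Tot(A^{\bullet,\bullet})$ with $A^m=\bigoplus_{i+j=m}A^{i,j}$ and differential $d_v+(-1)^i d_h$; because each column is of length $\le n+1$, each $A^m$ is a finite direct sum in $\AA$. Finally, the augmentation defines $f^\bullet\colon A^\bullet\to E^\bullet$, and I would verify that $\cone(f^\bullet)\in\Ac(\EE)$ by iteratively applying \Cref{Lemma:ConeOfAcyclic} along the exact columns, which is essentially the standard Cartan--Eilenberg spectral sequence argument.

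The main obstacle will be the construction of the horizontal differentials in the second step, where $d_h^2=0$ does not come for free. Since $d_E^{k+1}\circ d_E^k=0$, a naively-built lift only guarantees that the composite $A^{0,k}\to A^{0,k+2}$ factors through $\ker(A^{0,k+2}\twoheadrightarrow E^{k+2})$, not that it vanishes. Forcing the strict relation requires a coherent inductive pullback construction across all columns, and the uniform bound $\resdim_{\AA}(\EE)\le n$ is indispensable: it confines the required adjustments to a window of vertical width $n+1$, so the bicomplex remains of bounded vertical extent throughout and the total complex lands in $\C(\AA)$.
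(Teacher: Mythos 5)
There is a genuine gap, and it sits exactly where you flag it: the construction of the horizontal differentials. A Cartan--Eilenberg bicomplex is built by splitting each column's task into resolutions of cycles $Z^k$, boundaries $B^k$ and homology $H^k$ and gluing them with a horseshoe lemma; for an arbitrary complex $E^{\bullet}$ in a deflation-exact (or even exact) category the differentials $d_E^k$ need not be admissible, so the objects $Z^k$ and $B^k$ simply do not exist, and even when they do, $\AA$ is only \emph{pre}resolving (not extension-closed), so no horseshoe lemma is available for $\AA$-resolutions. Likewise, the alternative of lifting $d_E^k$ to a chain map of the chosen columns requires a comparison theorem, i.e.\ some lifting property of objects of $\AA$ against deflations; objects of $\AA$ are not projective in any sense here, so neither the existence of the lifts nor the relation $d_h^2=0$ (even up to the homotopies needed to correct it) can be established. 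Your proposed remedy --- ``a coherent inductive pullback construction across all columns'' --- is precisely the missing content, and once one carries it out honestly the bicomplex shape is lost: the paper's actual proof never lifts a map and never forms a totalization. It first applies \Cref{Lemma:ReplacementTechnique} to replace $E^{\bullet}$ from degree $0$ downwards, and then repeatedly splices in a finite $\AA$-resolution of the next untreated term via pullbacks along deflations (which always exist by axiom \ref{R2}), producing a tower $E^{\bullet}\leftarrow A_1^{\bullet}\leftarrow A_2^{\bullet}\leftarrow\cdots$ of quasi-isomorphisms. The uniform bound $\resdim_{\AA}(\EE)=n$ enters not to bound the height of a bicomplex but to guarantee that each splicing step only disturbs a window of width $n+1$, so the tower stabilizes in every degree and has a well-defined ``limit'' $A^{\bullet}\in\C(\AA)$.

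Your intuition about the role of $n$ (confining adjustments to a window of vertical width $n+1$) and your use of \Cref{Lemma:ConeOfAcyclic} to control acyclicity are the right ingredients, but as written the proof does not go through: step two is asserted, not performed, and the natural way to perform it is a different argument. To repair the proposal you would either need to restrict to complexes with admissible differentials and prove a horseshoe-type lemma for preresolving subcategories (which fails without extension-closure), or abandon the bicomplex and argue by degreewise stabilization as the paper does.
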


\begin{proof}
	Let $E^{\bullet}\in \C(\EE)$. From \Cref{Lemma:ReplacementTechnique}, we obtain the following commutative diagram
		\[\xymatrix{
			A_1^{\bullet}\ar[d]^{f_1^{\bullet}}&& \dots\ar[r] & A^{-2}\ar[r]\ar[d] & A^{-1}\ar[r]\ar[d] & A^{0}\ar[r]\ar[d] & E^{1}\ar[r]\ar@{=}[d] & E^2\ar[r]\ar@{=}[d] & \dots\\
			E^{\bullet}&& \dots\ar[r] & E^{-2}\ar[r] & E^{-1}\ar[r] & E^{0}\ar[r] & E^{1}\ar[r] & E^2\ar[r] & \dots\\
		}\] such that $A^i\in \AA$ and $f_1^{\bullet}$ is a quasi-isomorphism.  We provide an $n$-step procedure to construct a morphism $h^\bullet\colon A^\bullet_2 \to A^\bullet_1$ in $S$ satisfying the following properties: $A_2^i \in \AA$ for $i < 2$, and $h^i\colon A_1^i \to A_2^i$ is the identity whenever $i \not\in \{ -n, \ldots, 0, 1\}$. 
		
		Let $C^{\bullet}\in \Cb(\AA)$ be a finite $\AA$-resolution of $E^1$ exhibiting that $\resdim(E)\leq n$. Consider the following commutative diagram:
		\[\xymatrix{
			&&&&&\im(d_C^{-1})\ar@{>->}[d]&&&\\
			\dots\ar[r] & A^{-2}\ar[r]\ar@{=}[d] & B^{-1}\ar[r]\ar@{->>}[d]\ar@{}[rd]|{\text{PB}} & B^0\ar@{->>}[r]\ar@{->>}[d] & P\ar[r]\ar@{->>}[d]\ar@{}[rd]|{\text{PB}} & C^0\ar[r]\ar@{->>}[d] & E^2\ar[r]\ar@{=}[d] & E^3\ar[r]\ar@{=}[d] & \dots\\
			\dots\ar[r] & A^{-2}\ar[r] & A^{-1}\ar[r] & A^0\ar@{=}[r] & A^0\ar[r] & E^1\ar[r] & E^2\ar[r] & E^3\ar[r] & \dots
		}\] Here, the above diagram is constructed from right to left as follows: the squares labeled PB are pullback squares, which exist by axiom \ref{R2}; the deflation $B^0\deflation P$ (with $B^0\in \AA$) exists by axiom \ref{PR2}. Note that $B^{-1}\in \AA$ as it fits into the conflation $B^{-1}\inflation A^{-1}\oplus B^0\deflation A^0$ and $\AA\subseteq \EE$ is deflation-closed.
		
		Consider now the following commutative diagram:
		\[\xymatrix{
			A_{1,-1}^{\bullet}\ar[d]^{\text{q.i.}}&&\dots\ar[r] & A^{-2}\ar@{=}[d]\ar[r] & B^{-1}\ar[r]\ar@{=}[d] & \im(d_C^{-1})\oplus C^0\ar[r]\ar@{->>}[d] & C^0\ar[r]\ar@{->>}[d] & E^2\ar[r]\ar@{=}[d] & \dots\\
			\ar[d]^{\text{q.i.}}&&\dots\ar[r] & A^{-2}\ar[r]\ar@{=}[d] & B^{-1}\ar[r]\ar@{->>}[d] & B^0\ar[r]\ar@{->>}[d]\ar@{.>}[ru]|{\text{PB}} & E^1\ar[r]\ar@{=}[d]  & E^2\ar[r]\ar@{=}[d]  & \dots\\
			A_1^{\bullet}&&\dots\ar[r] & A^{-2\ar[r]} & A^{-1}\ar[r]\ar@{}[ru]|{\text{PB}} & A^0\ar[r] & E^1\ar[r] & E^2\ar[r] & \dots\\
		}\] Here, the top pullback square is a split pullback as $B^0\to E^1$ factors through $C^0\deflation E^1$. Note that all terms up to and including degree 1 of the complex $A_{1,-1}^{\bullet}$ belong to $\AA$, with the possible exception of $\im(d_C^{-1})\oplus C^0$. By construction, $\resdim(\im(d_C^{-1})\oplus C^0)<\resdim(E^1)\leq n$.  Hence, iterating the above procedure (the next step is to now taking a strictly shorter resolution of $\im(d_C^{-1})\oplus C^0$), one obtains a complex $A_2^{\bullet}=A_{1,-n}^{\bullet}$ such that all terms belong to $\AA$ up to degree 1 and $A_2^{\bullet}$ maps to $A_1^{\bullet}$ via a map in $\SS$. Furthermore, $A_2^{-n-l}=A_1^{-n-l}$ for all $l\geq 1$, $A_2^{1}=C^0$ and $A_2^{k}=E^{k}$ for all $k\geq 2$. Iterating this entire procedure, one produces a string of morphisms $E^{\bullet}\leftarrow A_1^{\bullet} \leftarrow A_2^{\bullet} \leftarrow \dots$ in $\SS$ that stabilizes in each degree. The result follows.
\end{proof}

In order to prove \Cref{Theorem:UnboundedMainTheorem}, it suffices the show that $\D(\AA)\to \D(\EE)$ is fully faithful. The following proposition already indicates that in many cases, fully faithfulness readily holds.

\begin{proposition}\label{Proposition:UnboundedTheoremWithAdditionalAssumptions}
	Let $\AA\subseteq \EE$ be a uniformly preresolving subcategory.
	\begin{enumerate}
		\item If $\AA\subseteq \EE$ is closed under normal subobjects, i.e.~$X\in \AA$ if $X\inflation A$ is an inflation with $A\in \AA$, then the natural map $\D(\AA)\to \D(\EE)$ is a triangle equivalence.
		\item If $\AA\subseteq \EE$ is extension-closed and $\EE$ satisfies axiom \ref{R3}, then the natural map $\D(\AA)\to \D(\EE)$ is a triangle equivalence.
	\end{enumerate}
\end{proposition}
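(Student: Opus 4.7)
My plan is to prove, in each of the two cases, that the lift $\Phi\colon \D(\AA) \to \D(\EE)$ of the embedding is an equivalence of triangulated categories.  Essential surjectivity is free from \Cref{Lemma:UnboundedEssentialSurjectivity}, since uniform preresolving gives $n \coloneqq \resdim_{\AA}(\EE) < \infty$.  For fully faithfulness I imitate the proof of \Cref{Theorem:PreResolvingDerivedEquivalence}: by (the dual of) \cite[Proposition~7.2.1 (ii)]{KashiwaraSchapira06}, the task reduces to establishing the unbounded analogue of \Cref{Proposition:AcyclicComplexesOfDeflationClosedSubcategoryCoincide}, namely $\C(\AA)\cap \Ac(\EE) = \Ac(\AA)$.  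The nontrivial inclusion $\subseteq$ amounts to showing that for every $X^\bullet \in \C(\AA) \cap \Ac(\EE)$ each cycle $K_i \coloneqq \im(d_X^{i-1}) \in \EE$ already lies in $\AA$.

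Part (1) is immediate from the hypothesis.  Acyclicity of $X^\bullet$ in $\EE$ provides an inflation $K_i \inflation X^i$ in $\EE$ with $X^i \in \AA$; since $\AA$ is closed under normal subobjects, $K_i \in \AA$ for each $i$.  The conflations $K_i \inflation X^i \deflation K_{i+1}$ then lie in $\AA$, whence $X^\bullet \in \Ac(\AA)$.

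For part (2), the strategy is a pullback construction iterated along a bounded $\AA$-resolution of $K_i$.  Choose such a resolution $0 \to B_n \to \cdots \to B_0 \to K_i \to 0$ (which exists by uniform preresolving) and form the pullback $P_0$ of the two deflations $B_0 \deflation K_i$ and $X^{i-1} \deflation K_i$.  By \Cref{Proposition:LocalizationPaperProposition3.7}, this pullback yields two conflations in $\EE$,
\[ K_{i-1} \inflation P_0 \deflation B_0 \qquad \text{and} \qquad C_1 \inflation P_0 \deflation X^{i-1},\]
with $C_1 \coloneqq \ker(B_0 \deflation K_i)$.  If $C_1$ happens to lie in $\AA$, then extension-closedness applied to the second conflation places $P_0 \in \AA$, and the first conflation together with axiom \ref{PR1} then places $K_{i-1} \in \AA$.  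To handle $C_1$ itself, the construction is iterated: the object $C_1$ admits an $\AA$-resolution of length $n-1$ obtained by truncating the resolution of $K_i$, so after $n$ pullback steps the role of $C_1$ is played by $B_n \in \AA$; tracing the implications back down the resulting stack of conflations then places $K_{i-1} \in \AA$.  Shifting $i$ yields $K_j \in \AA$ for every $j$, and hence $X^\bullet \in \Ac(\AA)$.  Axiom \ref{R3} on $\EE$ enters through \Cref{Proposition:DeflationClosedInheritsDeflationExactStructure}, ensuring that each intermediate pullback assembles into an honest conflation.

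The main obstacle I expect is executing the iterated pullback argument of part (2) cleanly: the recursion has to be set up so that the bound $n = \resdim_\AA(\EE)$ really suffices to terminate the chain of implications ``if the next syzygy is in $\AA$, then\ldots''.  This effectively plays the role of the classical fact that the $n$-th syzygy in an $\AA$-resolution lies in $\AA$ whenever $\resdim_\AA(\EE) \leq n$; in the one-sided setting Schanuel's lemma is not directly available because $\AA$ need not be summand-closed, and axiom \ref{R3} on $\EE$ should be precisely the ingredient that closes the loop.
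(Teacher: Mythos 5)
Part (1) of your argument is correct and is exactly the paper's proof: closure under normal subobjects immediately puts every $\im(d_X^i)$ in $\AA$. The reduction of full faithfulness to the statement $\C(\AA)\cap\Ac(\EE)\subseteq\Ac(\AA)$ also matches the paper.

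Part (2), however, has a genuine gap. Your recursion is organized around the implication ``if $C_1\in\AA$ then $K_{i-1}\in\AA$,'' where $C_1=\ker(B_0\deflation K_i)$, and you propose to discharge the hypothesis by iterating down the resolution of $K_i$ until you reach $B_n\in\AA$. But if that chain of implications closed, it would prove $C_1\in\AA$, i.e.\ that the first syzygy of $K_i$ in an arbitrary $\AA$-resolution lies in $\AA$; this is equivalent to $\resdim_\AA(K_i)\leq 1$ and is false in general (take $\AA$ the projectives in a module category of global dimension $n\geq 2$). Concretely, the recursion cannot be set up: your first step uses \emph{two} deflations onto $K_i$ (one from $B_0$, one from $X^{i-1}$), but at the next stage you have only one deflation onto $C_1$ (namely $B_1\deflation C_1$) and no analogue of $X^{i-1}\deflation K_i$, so there is no second conflation to play against. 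The underlying point is that $K_{i-1}\in\AA$ cannot be deduced from the single conflation $K_{i-1}\inflation X^{i-1}\deflation K_i$ together with $\resdim_\AA(K_i)\leq n$; one must use that $K_{i-1}$ is a \emph{deep} syzygy along the complex. The paper applies the statement ``the $n$-th syzygy lies in $\AA$'' (\cite[Proposition~2.3]{Stovicek14}, whose proof uses only extension-closedness and the Nine Lemma) to the exact sequence $\im(d_X^{i-n-1})\to X^{i-n}\to\cdots\to X^{i}\deflation \im(d_X^{i})$ extracted from the acyclic complex itself, concluding $\im(d_X^{i-n-1})\in\AA$, and then lets $i$ vary. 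Note also that this is where axiom \ref{R3} actually enters: by \Cref{Theorem:ImportanceR3andR3+} it guarantees the Nine Lemma needed in that argument. It is not needed to make your pullbacks into conflations, which already follows from axiom \ref{R2} and \Cref{Proposition:LocalizationPaperProposition3.7}.
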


\begin{proof}
	As $\D(\AA)\to \D(\EE)$ is already essentially surjective by \Cref{Lemma:UnboundedEssentialSurjectivity}, it suffices to show that if $A^{\bullet}\in \C(\AA)\cap \Ac(\EE)$, then $A^{\bullet}\in \Ac(\AA)$. We show this property in both cases.
	\begin{enumerate}
		\item Assume that $\AA\subseteq \EE$ is closed under normal subobjects. Let $A^{\bullet}\in \C(\AA)\cap \Ac(\EE)$.  Clearly $\im(d_A^{i})\in \AA$ for all $i\in \mathbb{Z}$ as $\im(d_A^{i})$ is a normal subobject of $A^{i+1}$. The results follows.
		\item	Assume that $\AA\subseteq \EE$ is extension-closed and $\EE$ satisfies axiom \ref{R3}. The reader may verify that the proof of \cite[Proposition~2.3]{Stovicek14} only uses that $\AA\subseteq \EE$ is extension-closed and the Nine Lemma. By \Cref{Theorem:ImportanceR3andR3+}, the Nine Lemma holds in $\EE$ as $\EE$ satisfies axiom \ref{R3}. Now let $A^{\bullet}\in \C(\AA)\cap \Ac(\EE)$ and consider $\im(d_A^{i})\in \EE$ for some $i\in \mathbb{Z}$. The following sequence yields a resolution of $\im(d_A^{i})$:
		\[\im(d_A^{i-n-1})\to A^{i-n}\dots \to A^{i-1} \to A^{i}\deflation \im(d_A^{i})\to 0.\]
		As $\resdim_\AA(\EE)=n$, \cite[Proposition~2.3]{Stovicek14} yields that $\im(d_A^{i-n-1})\in \AA$. As $i$ is chosen arbitrarily, $A^{\bullet}\in \Ac(\AA)$. The result follows. \qedhere
	\end{enumerate}
\end{proof}

In order to prove \Cref{Theorem:UnboundedMainTheorem} without the additional assumptions from \Cref{Proposition:UnboundedTheoremWithAdditionalAssumptions}, more work is needed. We start with the following lemma.

\begin{lemma}\label{lemma:ResolutionLength}
	Let $\AA$ be a preresolving subcategory in $\EE$.  Let $A^\bullet \to E$ and $B^\bullet \to E$ be resolutions of $E\in \EE$ such that there is a commutative diagram
\[\xymatrix{B^\bullet \ar[r] \ar[d]^{f^{\bullet}} & E \ar@{=}[d] \\
A^\bullet \ar[r] & E}\]
If $\ker d_A^{k} \in \AA$ for some $k \leq 0$, then $\ker d_B^k \in \AA$ (where $d_A^0\colon A^0\deflation E$). Moreover, $\cone(f^{\bullet})\in \Acm(\AA)$.
\end{lemma}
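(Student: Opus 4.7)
The plan is to establish the ``moreover'' claim first and to deduce the first statement from it.

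Each term $\cone(f^\bullet)^n = A^n \oplus B^{n+1}$ lies in $\AA$, so $\cone(f^\bullet) \in \Cm(\AA)$. To see acyclicity in $\EE$, I would augment both resolutions to acyclic complexes $\tilde A^\bullet, \tilde B^\bullet$ with $E$ in degree $1$ and extend $f^\bullet$ to $\tilde f^\bullet$ by $\tilde f^1 = \mathrm{id}_E$. By \Cref{Lemma:ConeOfAcyclic}, $\cone(\tilde f^\bullet)$ is acyclic in $\EE$, and since $\cone(f^\bullet)$ can be recognized as the smart truncation $\tau^{\leq 0}(\cone(\tilde f^\bullet))$ (equivalently, as the quotient by the contractible subcomplex $E \xrightarrow{\mathrm{id}} E$ placed in degrees $0,1$), it is acyclic too. \Cref{Proposition:AcyclicComplexesOfDeflationClosedSubcategoryCoincide} then yields $\cone(f^\bullet) \in \Acm(\AA)$, proving the moreover claim; in particular each syzygy $W^n := \ker d^n_{\cone(f^\bullet)}$ lies in $\AA$.

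For the first statement, write $Z^i_A := \ker d^i_A$ (with $d^0_A\colon A^0 \deflation E$) and similarly $Z^i_B$. The chain map identity $d_A^{i+1} \circ f^{i+1} = f^{i+2} \circ d_B^{i+1}$ identifies $W^k$ with the pullback $A^k \times_{Z^{k+1}_A} Z^{k+1}_B$, where $A^k \deflation Z^{k+1}_A$ comes from the acyclicity of $A^\bullet$ and $Z^{k+1}_B \to Z^{k+1}_A$ is induced by $f^{k+1}$; for $k = 0$ this degenerates to $A^0 \times_E E \cong A^0$ via the augmented cone. By pullback-stability of deflations (axiom \ref{R2}), the projection $W^k \deflation Z^{k+1}_B$ is a deflation with kernel $Z^k_A$. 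I would then form the pullback $Q := W^k \times_{Z^{k+1}_B} B^k$ of the two deflations $W^k \deflation Z^{k+1}_B$ and $B^k \deflation Z^{k+1}_B$. By \Cref{Proposition:LocalizationPaperProposition3.7}, $Q$ fits into two conflations
\[ Z^k_B \inflation Q \deflation W^k \qquad\text{and}\qquad Z^k_A \inflation Q \deflation B^k. \]
The assignment $b \mapsto ((-f^k b,\, d_B^k b),\, b)$ is a well-defined section of the second deflation: the chain map identity guarantees $(-f^k b, d_B^k b) \in W^k$ with the same image in $Z^{k+1}_B$ as $b$. Hence the second conflation splits and $Q \cong Z^k_A \oplus B^k$. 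Under the hypothesis $Z^k_A \in \AA$, and using $B^k \in \AA$, this gives $Q \in \AA$; deflation-closedness of $\AA$ applied to the first conflation then yields $Z^k_B \in \AA$. When $k=0$, the construction specializes to the classical Schanuel-style pullback $A^0 \times_E B^0$ with its section $(f^0, \mathrm{id}_{B^0})$.

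The main obstacle I expect is the pullback identification of $W^k$ and the construction of the explicit section of $Q \deflation B^k$; both steps rely essentially on the chain map property of $f^\bullet$ and require careful bookkeeping of the pullback data. Once these identifications are in place, the rest of the argument is a combined application of pullback-stability of deflations and the deflation-closedness of $\AA$.
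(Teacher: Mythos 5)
Your argument is correct, and it splits into two halves of different character relative to the paper. For the ``moreover'' claim you follow essentially the paper's route: augment both resolutions by $E$, apply \Cref{Lemma:ConeOfAcyclic} to the cone of the augmented map, strip off the contractible piece $E\xrightarrow{1_E}E$, and invoke \Cref{Proposition:AcyclicComplexesOfDeflationClosedSubcategoryCoincide}. One caution here: in a general deflation-exact category $\Acm(\EE)$ need not be closed under homotopy equivalence (that is the content of \ref{R3+} in \Cref{Proposition:BasicPropertiesDerivedCategory}), so ``quotient by a contractible subcomplex'' is not by itself a licence to conclude acyclicity; you should instead check directly that the degree $-1$ differential $(f^0,d_A^{-1})\colon B^0\oplus A^{-1}\to A^0$ of $\cone(f^\bullet)$ is the composite of the deflation onto $\im(d^{-1}_{\cone(\tilde f)})=\ker\bigl((1,d_A^0)\bigr)$ with the isomorphism $\ker\bigl((1,d_A^0)\bigr)\cong A^0$, which does go through (the paper glosses this with ``one readily verifies''). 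For the first claim your route genuinely differs from the paper's: the paper replaces $A^\bullet$ by its truncation at $\ker(d_A^k)$, so that the hypothesis puts every term of the resulting cone in $\AA$, and then reads off $\ker(d_B^k)$ directly as a syzygy of that acyclic complex in $\AA$; you instead keep the full cone, identify its syzygy $W^k$ with the pullback $A^k\times_{Z_A^{k+1}}Z_B^{k+1}$, and run a Schanuel-style double-pullback argument, splitting $Z_A^k\inflation Q\deflation B^k$ via the graph section and then applying \ref{PR1} to $Z_B^k\inflation Q\deflation W^k$. Both are valid; the paper's truncation is shorter and avoids the pullback identification and the section, while your version makes the Schanuel mechanism explicit and isolates exactly where the hypothesis $\ker(d_A^k)\in\AA$ enters (namely in $Q\cong Z_A^k\oplus B^k$), at the cost of the bookkeeping you flag. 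Two small citation points: the two conflations through $Q$ come from axiom \ref{R2} (pullback of a deflation is a deflation with isomorphic kernel) rather than from \Cref{Proposition:LocalizationPaperProposition3.7}, and the element-wise formula for the section should be read as an application of the universal properties of the two nested pullbacks.
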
             

\begin{proof}
	Assume that $\ker(d_A^{k})\in \AA$. Replacing the complex $A^{\bullet}$ by the truncated complex $\tau^{\leq k-1}A^{\bullet}$ with $\ker(d_A^{k})$ in degree $k-1$ and taking the cone of the above diagram, we obtain the following complex:
	\[\dots\to B^{k-2} \to B^{k-1} \xrightarrow{\begin{psmallmatrix}-d_B^{k-1}\\ \alpha \end{psmallmatrix}} B^k\oplus \ker(d_A^k) \to B^{k+1}\oplus A^k\to \dots \to B^0\oplus A^{-1}\to E\oplus A^0\to E \to 0 \to \dots\]
	By \Cref{Lemma:ConeOfAcyclic}, the above complex is acyclic. It follows that $\coker(-d_B^{k-2})\cong \ker(-d_B^{k})$ is isomorphic to the image of the admissible map $B^{k-1} \xrightarrow{\begin{psmallmatrix}-d_B^{k-1}\\ \alpha \end{psmallmatrix}} B^k\oplus \ker(d_A^k)$. One readily verifies that removing the contractible $E\xrightarrow{1_E} E$ from the above acyclic complex yields $\cone(f^{\bullet})\in \Acm(\EE)\cap \Cm(\AA)$. \Cref{Proposition:AcyclicComplexesOfDeflationClosedSubcategoryCoincide} now implies that $\cone(f^{\bullet})\in \Acm(\AA)$. It follows that $\ker(d_B^{k})\in \AA$.
\end{proof}

\begin{proposition}\label{proposition:ResolutionLength}
	Let $\AA$ be a preresolving subcategory in $\EE.$  Let $E \in \EE$ with $\resdim_\AA(E) = n < \infty$.  For any $\AA$-resolution $A^\bullet \to E$, there is a commutative diagram
\[\xymatrix{B^\bullet \ar[r] \ar[d]^{f^{\bullet}} & E \ar@{=}[d] \\
A^\bullet \ar[r] & E.}\]
where the top row is an $\AA$-resolution and $\ker d_B^{-n+1} \in \AA$. Moreover, $\cone(f^{\bullet})\in \Acm(\AA)$.
\end{proposition}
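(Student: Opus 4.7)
Since $\resdim_\AA(E)=n$, I would first fix a finite $\AA$-resolution $C^\bullet \to E$ of length $n$; exactness at $C^{-n+1}$ then gives $\ker d_C^{-n+1} \cong C^{-n} \in \AA$. The strategy is to build $B^\bullet$ as a \emph{common refinement} of $A^\bullet$ and $C^\bullet$: an $\AA$-resolution $B^\bullet \to E$ carrying chain maps $f^\bullet \colon B^\bullet \to A^\bullet$ and $g^\bullet \colon B^\bullet \to C^\bullet$, each commuting with the augmentation to $E$. Given such $B^\bullet$, \Cref{lemma:ResolutionLength} applied to $g^\bullet$ immediately yields $\ker d_B^{-n+1} \in \AA$; for the cone statement, observe that $f^\bullet$ is automatically a quasi-isomorphism in $\C(\EE)$ (both $A^\bullet$ and $B^\bullet$ resolve $E$), so $\cone(f^\bullet) \in \Cm(\AA) \cap \Acm(\EE) = \Acm(\AA)$ by \Cref{Proposition:AcyclicComplexesOfDeflationClosedSubcategoryCoincide}.

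The construction of $B^\bullet$ proceeds by descending induction on the degree. In degree zero, I would form the pullback $P^0 = A^0 \times_E C^0$ (existing by \ref{R2}) and apply \ref{PR2} to obtain a deflation $B^0 \deflation P^0$ with $B^0 \in \AA$; the two projections supply the degree-zero components of $f^\bullet$ and $g^\bullet$. For the inductive step, suppose $B^0,\ldots,B^{-k}$ has been built as an $\AA$-resolution of $E$ in these degrees with compatible chain maps to $A^\bullet$ and $C^\bullet$, and let $K^{-k} = \ker d_B^{-k}$. The chain-map property induces morphisms $K^{-k} \to \im d_A^{-k-1}$ and $K^{-k} \to \im d_C^{-k-1}$. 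Pulling back the deflation $A^{-k-1} \deflation \im d_A^{-k-1}$ along the first map, and then pulling back $C^{-k-1} \deflation \im d_C^{-k-1}$ along the resulting map, yields an object $Q^{-k-1}$ with a deflation onto $K^{-k}$ and morphisms to $A^{-k-1}$ and $C^{-k-1}$. Applying \ref{PR2} once more produces the required $B^{-k-1} \in \AA$ deflating onto $Q^{-k-1}$. The differential $d_B^{-k-1}$ factors as a deflation $B^{-k-1} \deflation K^{-k}$ followed by $K^{-k} \inflation B^{-k}$, which keeps the complex acyclic at $B^{-k}$, and the pullback universal properties ensure that both new chain-map components commute with the previously built ones.

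The main obstacle is this inductive construction of the common refinement: while conceptually a direct adaptation of the classical comparison of projective resolutions, it requires threading several pullbacks and applications of \ref{PR2} at each stage and verifying coherence of both chain maps with the augmentations to $E$. Once the refinement is in place, the proof reduces to the two direct invocations of \Cref{lemma:ResolutionLength} and \Cref{Proposition:AcyclicComplexesOfDeflationClosedSubcategoryCoincide} described above.
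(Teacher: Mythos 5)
Your proposal is correct and takes essentially the same route as the paper's proof: the paper likewise fixes a length-$n$ resolution $C^\bullet$, builds $B^\bullet$ degreewise as a common refinement of $A^\bullet$ and $C^\bullet$ via iterated pullbacks (axioms \ref{R1}, \ref{R2}) followed by a deflation from an object of $\AA$ (axiom \ref{PR2}), and then invokes \Cref{lemma:ResolutionLength} for both conclusions. Your sequential pullbacks produce the same triple fibre product as the paper's diagram, and your cone argument via \Cref{Proposition:AcyclicComplexesOfDeflationClosedSubcategoryCoincide} is exactly the one carried out in the proof of \Cref{lemma:ResolutionLength}.
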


\begin{proof}
	Let $C^\bullet \to E$ be an $\AA$-resolution of $E$ exhibiting that $\resdim_\AA E = n$ (so $C^{-n-1} = 0$ and hence $\ker(d^{-n+1}) = C^{-n} \in \AA$). By axiom \ref{R2}, we obtain the pullback $P^0$ of $A^0\deflation E$ along $C^0\deflation E$. By axiom \ref{PR2}, there is a deflation $B^0\deflation P^0$ with $B^0\in \AA$. By axiom \ref{R1}, we obtain the following commutative diagram
	\[\xymatrix{
		\im(d_C^{-1})\ar@{>->}[r] & C^0\ar@{->>}[r] & E\\
		I^{-1}_B\ar@{>->}[r]\ar@{.>>}[u]\ar@{.>>}[d] & B^0\ar@{->>}[r]\ar@{->>}[u]\ar@{->>}[d] & E\ar@{=}[u]\ar@{=}[d]\\
		\im(d_A^{-1})\ar@{>->}[r] & A^0\ar@{->>}[r] & E
	}\] whose rows are conflations. Note that the left squares are pullback squares and, by axiom \ref{R2}, the induced vertical maps are deflations.
	
	We now iteratively construct $B^{\bullet}$ as follows: Applying axiom \ref{R2} thrice, we obtain the following commutative diagram
	\[\xymatrix@R=1pc@C=1pc{
		&C^{-1}\ar@{->>}[rr]&&\im(d_C^{-1})\\
		&&Q^{-1}\ar[lu]\ar@{->>}[rd]\ar@{}[ru]|{\text{PB}}&\\
		B^{-1}\ar@{->>}[r]&R^{-1}\ar@{->>}[ru]\ar@{->>}[rd]\ar@{}[rr]|{\text{PB}}&&I_B^{-1}\ar[uu]\ar[dd]\\
		&&P^{-1}\ar[ld]\ar@{->>}[ru]\ar@{}[rd]|{\text{PB}}&\\
		&A^{-1}\ar@{->>}[rr]&&\im(d_A^{-1})\\
	}\] where the deflation $B^{-1}\deflation R^{-1}$ is obtained by axiom \ref{PR2}. By axiom \ref{R1}, one obtains the conflation $I^{-2}_B\inflation B^{-1}\deflation I^{-1}_B$ and the induced maps $\im(d_C^{-2})\leftarrow I^{-2}_B\rightarrow \im(d_A^{-2})$. Iterating this procedure, one obtains morphisms $A^{\bullet} \stackrel{f^{\bullet}}{\leftarrow} B^{\bullet} \rightarrow C^{\bullet}$ such that $B^{\bullet}$ is an $\AA$-resolution of $E$. By \Cref{lemma:ResolutionLength}, $\ker(d_B^{-n+1})\in \AA$. That $f^{\bullet}$ is a quasi-isomorphism in $\C(\AA)$ is shown in the proof of \Cref{lemma:ResolutionLength}.
\end{proof} 

\begin{corollary}\label{Corollary:DetectingAcyclicComplexesWithTermsInAAInD(A)}
	Let $\AA$ be a preresolving subcategory in $\EE$ with $\resdim_\AA(\EE) = n \in \bN$.  For each $E^{\bullet}\in \Ac(\EE)\cap \C(\AA)$, there is a map $\alpha^{\bullet}\colon A^{\bullet}\to E^{\bullet}$ in $\TT$ where $A^\bullet \in \Ac(\AA).$  In particular, $A^\bullet \cong E^\bullet$ in $\D(\AA).$
\end{corollary}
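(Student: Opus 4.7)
The plan is to construct $A^\bullet \in \Ac(\AA)$ together with the required quasi-isomorphism $\alpha^\bullet\colon A^\bullet \to E^\bullet$ in $\TT$ by iterating \Cref{proposition:ResolutionLength} to push the cycles of successively refined complexes into $\AA$ at increasingly large indices, and then taking a degree-wise stabilization of the resulting tower.

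\emph{Key observation.} For any $F^\bullet \in \Ac(\EE) \cap \C(\AA)$, once $\ker(d_F^{k_0}) \in \AA$ for some $k_0 \in \bZ$, the conflations $\ker(d_F^k) \inflation F^k \deflation \ker(d_F^{k+1})$ arising from acyclicity in $\EE$, together with the deflation-closure of $\AA \subseteq \EE$, force $\ker(d_F^k) \in \AA$ for all $k \leq k_0$ by downward induction. Thus it is enough to produce a morphism $A^\bullet \to E^\bullet$ in $\TT$ with $A^\bullet \in \Ac(\EE) \cap \C(\AA)$ and $\ker(d_A^{k_m}) \in \AA$ along some cofinal sequence $k_m \to +\infty$.

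\emph{One-step modification.} Given $F^\bullet \in \Ac(\EE) \cap \C(\AA)$ and $N \in \bZ$, apply \Cref{proposition:ResolutionLength} to the truncation $F^{\leq N}$ viewed as an $\AA$-resolution of $\im(d_F^N) \in \EE$ (which has resolution dimension $\leq n$). This yields a new $\AA$-resolution $B^\bullet \to \im(d_F^N)$ with $\ker(d_B^{N-n+1}) \in \AA$ together with a comparison $B^\bullet \to F^{\leq N}$ whose cone lies in $\Acm(\AA)$. Splicing $B^\bullet$ onto $F^{>N}$ along the inflation $\im(d_F^N) \inflation F^{N+1}$ produces $F'^{\bullet} \in \Ac(\EE) \cap \C(\AA)$ together with a morphism $F'^{\bullet} \to F^\bullet$ in $\TT$ (its cone is supported in degrees $\leq N$ and is acyclic in $\AA$).

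\emph{Iteration and stabilization.} Starting from $A_0^\bullet = E^\bullet$ and iterating the one-step modification at $N = N_1 < N_2 < \cdots \to +\infty$ yields a tower of morphisms in $\TT$. I expect to verify that once $A_{m-1}^\bullet$ carries cycles in $\AA$ in all degrees $\leq N_{m-1}-n+1$ (inherited from step $m-1$ via the key observation), the pullback tower inside \Cref{proposition:ResolutionLength} applied at $N_m$ may be arranged to leave $A_{m-1}^\bullet$ untouched below that frontier. The sequence then stabilizes degree-wise to a complex $A^\bullet \in \Ac(\EE) \cap \C(\AA)$ satisfying $\ker(d_A^{N_m - n + 1}) \in \AA$ for every $m$ and equipped with the required morphism $\alpha^\bullet \colon A^\bullet \to E^\bullet$ in $\TT$; the key observation then delivers $A^\bullet \in \Ac(\AA)$, and the final assertion $A^\bullet \cong E^\bullet$ in $\D(\AA)$ follows immediately from $\alpha^\bullet \in \TT$.

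\emph{Main obstacle.} The essential technical point is this ``bounded modification'' step: verifying that the pullback construction in the proof of \Cref{proposition:ResolutionLength} can be terminated or stabilized once the input resolution already carries cycles in $\AA$ at sufficiently deep degrees, so that successive applications compose coherently into a well-defined limit complex. This amounts to a careful bookkeeping using deflation-closure of $\AA$, axiom \ref{PR2}, and the universality of the pullback squares appearing in that construction.
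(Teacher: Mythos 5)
Your strategy---iterate a resolution-improving step at degrees $N_1 < N_2 < \cdots \to +\infty$ and pass to a degree-wise limit---is genuinely different from the paper's argument, and the point where it diverges is where the gap lies. The crux is your claim that the construction of \Cref{proposition:ResolutionLength} ``may be arranged to leave $A_{m-1}^\bullet$ untouched below that frontier.'' As it stands, that construction rebuilds the \emph{entire} resolution from right to left: in each degree it forms pullbacks against the chosen finite resolution $C^\bullet$ and then picks a fresh deflation $B^k\deflation R^k$ via \ref{PR2}, so nothing forces $B^k$ to agree with the old term for $k\ll N_m$. Arranging for the modification to be supported in a bounded window of degrees is exactly the delicate re-engineering carried out in the proof of \Cref{Lemma:UnboundedEssentialSurjectivity}, and you would have to redo it here while additionally keeping the cone of each comparison map acyclic in $\AA$ (not merely in $\EE$). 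Note also that even a degree-wise stable tower only gives that each finite composite lies in $\TT$; since $\TT$ need not be closed under composition, the assertion that the limit map lies in $\TT$ (as the statement requires) needs a separate argument. Your ``key observation'' and the junction check implicit in the splicing step are fine; the iteration/stabilization step is the heart of the matter and is not established.

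The paper sidesteps the tower entirely. For a fixed $k$ it applies \Cref{proposition:ResolutionLength} once to the brutal truncation $E_L^\bullet\deflation \im(d_E^k)$, obtaining $B^\bullet$ with $\im(d_B^{k-n})\in\AA$ and $\cone(f^\bullet)\in\Acm(\AA)$; the identification $\cone(g^\bullet)\cong\cone(1_{B^\bullet})\oplus E_L^\bullet$ together with the conflations of \Cref{Lemma:ConeOfAcyclic} and axiom \ref{PR1} then yields $\im(d_E^l)\oplus B^{l+1}\in\AA$ for $l=k-n-1$, hence for every $l$ since $k$ was arbitrary. So one never replaces $E^\bullet$ at all: one adds a contractible complex $C^\bullet$ built from disks $B^{l+1}\xrightarrow{1}B^{l+1}$, so that every image of $C^\bullet\oplus E^\bullet$ lands in $\AA$, i.e.\ $A^\bullet\coloneqq C^\bullet\oplus E^\bullet\in\Ac(\AA)$, and the projection $A^\bullet\to E^\bullet$ has contractible cone with terms in $\AA$, hence lies in $\TT$. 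The idea you are missing is precisely this: the obstruction is measured by $\resdim_\AA(\im(d_E^l))\leq 1$, which lets you correct $E^\bullet$ by a split (contractible) summand instead of an infinite tower of replacements.
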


\begin{proof}
Choose $k\in \mathbb{Z}$. The complex $E^{\bullet}$ yields an $\AA$-resolution $E_L^{\bullet}\deflation \im(d_E^{k})$ of $\im(d_E^k)\in \EE$ where $E_L^{\bullet}$ is the brutal truncation of $E^{\bullet}$ in degree $k$. By \Cref{proposition:ResolutionLength}, we obtain a commutative diagram 
	\[\xymatrix{
		B^{\bullet} \ar@{->>}[r]\ar[d]^{f^{\bullet}} & \im(d_E^k)\ar@{=}[d]\\
		E_L^{\bullet} \ar@{->>}[r] & \im(d_E^k)\\
	}\] such that $\cone(f^{\bullet})\in \Acm(\AA)$ and such that $\im(d_B^{k-n})\in \AA$.  Consider now the following map
			\[\xymatrix@R=1pc@C=1pc{
			\Sigma^{-1 }\cone(f^{\bullet})\ar[d]^{g^{\bullet}}&& \dots\ar[r]& B^{l}\oplus E_L^{l-1}\ar[r]\ar[d]^{\begin{psmallmatrix}1&0\end{psmallmatrix}} & B^{l+1}\oplus E_L^{l}\ar[r]\ar[d]^{\begin{psmallmatrix}1&0\end{psmallmatrix}} & B^{l+2}\oplus E_L^{l+1}\ar[r]\ar[d]^{\begin{psmallmatrix}1&0\end{psmallmatrix}} & \dots  \\
			B^{\bullet}\ar[d]&& 							 \dots\ar[r]& B^l \ar[r]\ar[d]                & B^{l+1}\ar[r]\ar[d]             & B^{l+2}\ar[r]\ar[d]               & \dots  \\
			\cone(g^{\bullet})&&                     \dots\ar[r]& B^{l+1}\oplus E_L^{l}\oplus B^l\ar[r] & B^{l+2}\oplus E_L^{l+1}\oplus B^{l+1}\ar[r] & B^{l+3}\oplus E_L^{l+2}\oplus B^{l+2}\ar[r] &\dots \\
		}\]
		It is well known (and straightforward to verify) that $\cone(g) \cong \cone(1_{B^\bullet}) \oplus E_L^\bullet$ in $\C(\EE)$. 	By \Cref{Lemma:ConeOfAcyclic}, we have the following conflation
		\[\im(d_{\cone(g)}^{l})\inflation \im(d_{\cone(f)}^{l+1})\oplus B^{l+1}\deflation \im(d_B^{l+1})\]
for each $l \in \bZ.$  When $l = k-n-1$, then $\im(d_B^{l+1}) \in \AA$ and, by axiom \ref{PR1}, we have $\im(d_{\cone(g)}^{l}) \cong \im(d_E^l) \oplus B^{l+1} \in \AA.$  It follows that one can add a null-homotopic complex $C^{\bullet}\in \C(\AA)$ to $E^{\bullet}$ such that $C^{\bullet}\oplus E^{\bullet}\in \Ac(\AA)$. this concludes the proof.
\end{proof}

\begin{remark}
	Let $\AA$ be a preresolving subcategory in $\EE$ with $\resdim_\AA(\EE) = n \in \bN$. The above proof yields that $\im(d_E^l) \oplus B^{l+1}\in \AA$ for all $l\in \mathbb{Z}$, it follows that $\resdim_{\AA}(\im(d_E^{l}))\leq 1$.
\end{remark}

Using the above corollary, one readily verifies that the conditions of \cite[Proposition~7.2.1]{KashiwaraSchapira06} are satisfied. This completes the proof of \Cref{Theorem:UnboundedMainTheorem}.
\section{Examples and applications}\label{Section:ExampleAndApplication}

\subsection{Examples}

\begin{example}
	Consider the Isbell category $\II$ (see \cite[Section~2]{Kelly69}), that is, $\II$ is the full subcategory of the category $\mathsf{Ab}$ of abelian groups containing no element of order $p^2$ (for a fixed prime $p$).	Clearly $\II\subseteq \mathsf{Ab}$ is deflation-closed (in fact, $\II\subseteq \mathsf{Ab}$ is closed under subobjects) and thus $\II$ is a deflation-exact category satisfying axiom \ref{R3+} (see \Cref{Proposition:DeflationClosedInheritsDeflationExactStructure}).
	
	Clearly, $\II$ contains all projectives and thus $\II\subseteq \mathsf{Ab}$ satisfies condition \ref{PR2}. Hence $\II\subseteq \mathsf{Ab}$ is preresolving. Furthermore, as $\mathsf{Ab}=\Mod(\mathbb{Z})$ global dimension one, we find that $\resdim_{\II}(\mathsf{Ab}) = 1.$  By \Cref{Theorem:PreResolvingDerivedEquivalence,Theorem:UnboundedMainTheorem}, we know that the natural functors $\Dstar(\II)\simeq \Dstar(\mathsf{Ab})$ are triangle equivalences (for $* \in \{-,b,\varnothing\}$).

\end{example}

\begin{remark}
	\begin{enumerate}
		\item Note that the Isbell category $\II$ is not an extension-closed subcategory of $\mathsf{Ab}$, as evidenced by the conflation $\mathbb{Z}/p\mathbb{Z}\inflation \mathbb{Z}/p^2\mathbb{Z}\deflation \mathbb{Z}/p\mathbb{Z}$. In particular, $\II\subseteq \mathsf{Ab}$ is not a (finitely) resolving subcategory.
		\item By \cite[Example~4.7]{BazzoniCrivei13}, the category $\II$ is pre-abelian but does not satisfy axioms \textbf{L1}, \textbf{L2} and \textbf{L3}.  In particular, $\II$ is not exact.
	\end{enumerate}
\end{remark}

\begin{example}  Let $\Gamma$ be an ordered group.  Given a $\Gamma$-filtered ring $FR$ and a subset $\Lambda\subseteq \Gamma$, one can define the category of $FR$-glider representations $\mathsf{Glid}_{\Lambda}(FR)$ (see \cite{CaenepeelVanOystaeyen19,HenrardvanRoosmalen20}).  The category of $FR$-glider representations is a subcategory of the category of $FR$-prefragments $\mathsf{Prefrag}_{\Lambda}(FR)$. By \cite[Corollary~5.11]{HenrardvanRoosmalen20}, $\mathsf{Glid}_{\Lambda}(FR)\subseteq \mathsf{Prefrag}_{\Lambda}(FR)$ is closed under subobjects (in particular, $\mathsf{Glid}_{\Lambda}(FR)\subseteq \mathsf{Prefrag}_{\Lambda}(FR)$ is deflation-closed), but in general not under extensions. Moreover, by \cite[Corollary~8.5]{HenrardvanRoosmalen20}, $\mathsf{Glid}_{\Lambda}(FR)$ has enough projectives and the projectives belong to $\mathsf{Prefrag}_{\Lambda}(FR)$. It follows that $$\resdim_{\mathsf{Glid}_{\Lambda}(FR)}(\mathsf{Prefrag}_{\Lambda}(FR)) = 1.$$

By \Cref{Theorem:PreResolvingDerivedEquivalence,Theorem:UnboundedMainTheorem}, we find that the natural functors $\Dstar(\mathsf{Glid}_{\Lambda}(FR))\simeq \Dstar(\mathsf{Prefrag}_{\Lambda}(FR))$ are triangle equivalences (for $* \in \{-,b,\varnothing\}$).  This recovers \cite[Proposition~8.6]{HenrardvanRoosmalen20}.
\end{example}

\subsection{Application: the exact hull}

Throughout this section, let $\EE$ denote a deflation-exact category satisfying axiom \ref{R3+}. Following \cite{HenrardvanRoosmalen19b} and \cite{Rosenberg11}, there is an exact category $\EE^{\mathsf{ex}}$, called the \emph{exact hull} of $\EE$, together with an exact embedding $j\colon\EE\to\EE^{\mathsf{ex}}$ which is $2$-universal among exact functors to exact categories. Explicitly, the exact hull $\EE^{\mathsf{ex}}$ can be constructed as the extension-closure of $i(\EE)$ in $\Db(\EE)$ and inherits a conflation structure from the triangulated structure of $\Db(\EE)$. The aim of this section is to provide an alternative proof for the fact that $j$ lifts to a derived equivalence on the bounded derived categories (see \cite[Theorem~7.15]{HenrardvanRoosmalen19b}) using \Cref{Theorem:PreResolvingDerivedEquivalence}. 

The following is \cite[Corollary~7.5]{HenrardvanRoosmalen19b}.

\begin{proposition}\label{Proposition:EveryObjectOfHullIsCokernelOfInflationInEE}
	For every $Z\in \EE^{\mathsf{ex}}$, there exists a conflation $X\inflation Y\deflation Z$ in $\EE^{\mathsf{ex}}$ such that $X,Y\in \EE$.
\end{proposition}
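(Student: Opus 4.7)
My plan is to prove the statement by induction on the extension-length of $Z$ in the extension-closure defining $\EE^{\mathsf{ex}}$. Call an object $Z \in \EE^{\mathsf{ex}}$ \emph{presentable} if it fits into a conflation $X \inflation Y \deflation Z$ with $X, Y \in \EE$; the goal is then to prove that every object of $\EE^{\mathsf{ex}}$ is presentable. Since $\EE^{\mathsf{ex}}$ is by construction the extension-closure of $i(\EE)$ in $\Db(\EE)$, and every $Z \in \EE$ is trivially presentable via the split conflation $0 \inflation Z \xrightarrow{\mathrm{id}} Z$ (using axioms \ref{R0} and \ref{L0}), it suffices to verify that presentability is closed under extensions in the exact category $\EE^{\mathsf{ex}}$.

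For the inductive step, suppose $Z_1 \inflation Z \deflation Z_2$ is a conflation in $\EE^{\mathsf{ex}}$ and each $Z_i$ has a presentation $X_i \inflation Y_i \deflation Z_i$ with $X_i, Y_i \in \EE$. I would first form the pullback $P$ of $Z \deflation Z_2$ along $Y_2 \deflation Z_2$ in the exact category $\EE^{\mathsf{ex}}$, which by axiom \ref{R2} and \Cref{Proposition:LocalizationPaperProposition3.7} produces an object $P$ sitting in conflations $X_2 \inflation P \deflation Z$ and $Z_1 \inflation P \deflation Y_2$. Applying the horseshoe lemma to the latter conflation---combined with the presentation of $Z_1$ and the trivial presentation $0 \inflation Y_2 \xrightarrow{\mathrm{id}} Y_2$ of $Y_2$---produces a deflation from an object $Y \in \EE^{\mathsf{ex}}$ onto $P$, where $Y$ is built as an extension of $Y_2$ by $Y_1$; composing with $P \deflation Z$ via axiom \ref{R1} yields a deflation $Y \deflation Z$ whose kernel $X$ is identified, by the Snake lemma available in the exact category $\EE^{\mathsf{ex}}$, as an extension of $X_2$ by $X_1$.

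The main obstacle is then to verify that the middle term $Y$ and the kernel $X$ both actually lie in $\EE$, not merely in $\EE^{\mathsf{ex}}$; a generic extension of two $\EE$-objects in $\EE^{\mathsf{ex}}$ need not remain in $\EE$. To overcome this I would invoke the detailed construction of $\EE^{\mathsf{ex}}$ from \cite{HenrardvanRoosmalen19b}, which provides a concrete model for objects of $\EE^{\mathsf{ex}}$ (essentially as cokernels of admissible morphisms in $\EE$) and allows one to arrange the horseshoe so that $Y$ may be taken as the split extension $Y_1 \oplus Y_2 \in \EE$ after possibly refining the presentations; one then extracts $X \in \EE$ using the deflation-closedness of $\EE$ in $\EE^{\mathsf{ex}}$ combined with axiom \ref{R3+} in $\EE$, which guarantees the snake-type identifications go through while staying inside $\EE$. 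This last step is the key technical point that makes the proposition work specifically in the \ref{R3+} setting.
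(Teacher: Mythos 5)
The paper does not actually prove this statement: it quotes it as \cite[Corollary~7.5]{HenrardvanRoosmalen19b}, where it is essentially built into the construction of the hull (objects of $\EE^{\mathsf{ex}}$ are, up to isomorphism in $\Db(\EE)$, two-term complexes $[X \inflation Y]$ with $X,Y\in\EE$). Your high-level strategy --- induct on extension length and show that ``presentability'' is closed under extensions in $\EE^{\mathsf{ex}}$ --- is the right one, but two of your steps have genuine gaps.

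First, the horseshoe lemma does not apply to the conflation $Z_1 \inflation P \deflation Y_2$ with the presentations $Y_1 \deflation Z_1$ and $\mathrm{id}\colon Y_2 \to Y_2$: to produce the deflation $Y_1 \oplus Y_2 \deflation P$ one must lift $\mathrm{id}_{Y_2}$ along $P \deflation Y_2$, i.e.\ split that deflation, and no such section exists in general since $Y_2$ has no projectivity property. Second, and more seriously, even granting a conflation $X \inflation Y_1\oplus Y_2 \deflation Z$, the kernel $X$ is only exhibited as an extension of $X_2$ by $X_1$ \emph{inside} $\EE^{\mathsf{ex}}$, and such an extension need not lie in $\EE$: the paper itself remarks that $\EE$ is in general not extension-closed in $\EE^{\mathsf{ex}}$ (otherwise $\EE$ would already be exact). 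Your proposed repair --- extracting $X\in\EE$ from ``the deflation-closedness of $\EE$ in $\EE^{\mathsf{ex}}$'' --- is circular within the paper's logic, since deflation-closedness of $\EE$ in $\EE^{\mathsf{ex}}$ is the \emph{next} proposition and its proof invokes the present one; it also would not apply as stated, because the quotient $Z$ of $X \inflation Y$ lies in $\EE^{\mathsf{ex}}$ rather than in $\EE$. The argument in \cite{HenrardvanRoosmalen19b} avoids both problems by working with two-term complexes in $\Db(\EE)$ instead of running the horseshoe lemma in $\EE^{\mathsf{ex}}$: after replacing $[X_2 \inflation Y_2]$ by a quasi-isomorphic inflation-complex $[X_2' \inflation Y_2']$ so that the connecting morphism $Z_2 \to \Sigma Z_1$ of the defining triangle is realized by an honest chain map (this replacement is where axiom \ref{R3+} and the calculus of fractions enter), the extension $Z$ becomes the totalization $[X_1 \oplus X_2' \inflation Y_1 \oplus Y_2']$, whose terms are genuine direct sums and hence lie in $\EE$ --- not merely extensions of objects of $\EE$. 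Without a replacement step of this kind your induction cannot close.
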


The above proposition yields that $\EE\subseteq \EE^{\mathsf{ex}}$ satisfies condition \ref{PR2}, in fact, $\resdim_\EE(\EE^{\mathsf{ex}}) \leq 1$. Thus, to show that $j\colon \EE\to \EE^{\mathsf{ex}}$ lifts to a derived equivalence $\D^{*}(\EE)\to \D^{*}(\EE^{\mathsf{ex}})$ (where $* \in \{-,b,\varnothing\}$), it suffices to show that $\EE\subseteq\EE^{\mathsf{ex}}$ is deflation-closed.

\begin{proposition}
A deflation-exact category $\EE$ satisying axiom \ref{R3+} lies deflation-closed in its exact hull $\EE^{\mathsf{ex}}.$
\end{proposition}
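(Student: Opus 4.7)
The plan is to show that $g\colon Y\to Z$, viewed as a morphism in $\EE$ (since $Y,Z\in\EE$), is already a deflation in $\EE$. Its kernel then fits in a conflation $\ker(g)\inflation Y\deflation Z$ in $\EE$ that must coincide in $\EE^{\mathsf{ex}}$ with the given $X\inflation Y\deflation Z$ by uniqueness of kernels of deflations, giving $X\cong\ker(g)\in\EE$.

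First, I would invoke \Cref{Proposition:EveryObjectOfHullIsCokernelOfInflationInEE} to fix a conflation $X'\inflation Y'\deflation X$ in $\EE^{\mathsf{ex}}$ with $X',Y'\in\EE$. The composition $Y'\deflation X\inflation Y$ is a morphism in $\EE^{\mathsf{ex}}$ between objects of $\EE$, hence by full faithfulness of $j$ a morphism $f\colon Y'\to Y$ in $\EE$; moreover $g\circ f=0$ since $X\inflation Y\deflation Z$ vanishes in $\EE^{\mathsf{ex}}$. Passing to $\Db(\EE)$, the object $X$ is represented by the 2-term complex $A^\bullet=[X'\to Y']$ in degrees $-1,0$; the inflation $X\inflation Y$ is represented by the chain map $A^\bullet\to[Y]$ with degree-$0$ component $f$; and its cone $C^\bullet=[X'\to Y'\xrightarrow{f}Y]$ in degrees $-2,-1,0$ is isomorphic to $Z$ via the triangle $X\to Y\to Z\to\Sigma X$. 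The chain map $\phi\colon C^\bullet\to[Z]$ given by $g$ in degree $0$ (and zero elsewhere) is well-defined because $gf=0$, and realizes this triangle isomorphism since its composition with the canonical inclusion $Y\hookrightarrow C^\bullet$ is exactly $g$. Hence $\phi$ is a quasi-isomorphism.

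Consequently $\cone(\phi)=[X'\to Y'\xrightarrow{f}Y\xrightarrow{g}Z]$ (in degrees $-3,\dots,0$) is a bounded complex acyclic in $\Db(\EE)$. Because $\EE$ satisfies \ref{R3+}, \Cref{Proposition:BasicPropertiesDerivedCategory}(5) ensures that $\Acb(\EE)$ is thick in $\Kb(\EE)$, so $\cone(\phi)$ already lies in $\Acb(\EE)$, i.e., it is acyclic in $\EE$. Reading off this acyclicity shows that $f$ is admissible in $\EE$ with image $\ker(g)$ and that $g\colon Y\to Z$ is a deflation in $\EE$, so $\ker(g)\in\EE$ and, by the opening remark, $X\in\EE$. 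I expect the main obstacle to be the justification that $\phi$ is a quasi-isomorphism: this reduces to verifying that $\phi$ extends to an isomorphism of distinguished triangles in $\Db(\EE)$ from the canonical $X\to Y\to C^\bullet\to\Sigma X$ to the given $X\to Y\to Z\to\Sigma X$ over the identities on $X$ and $Y$, which is pinned down by the compatibility $\phi\circ(Y\hookrightarrow C^\bullet)=g$.
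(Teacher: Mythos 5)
Your overall strategy coincides with the paper's: use \Cref{Proposition:EveryObjectOfHullIsCokernelOfInflationInEE} to build a bounded complex with entries in $\EE$ ending in $Y\xrightarrow{g}Z$ which is zero in $\Db(\EE)$, use axiom \ref{R3+} together with \Cref{Proposition:BasicPropertiesDerivedCategory} to upgrade ``zero in $\Db(\EE)$'' to ``acyclic in $\EE$'', and read off that $g$ is a deflation, hence $X\cong\ker(g)\in\EE$. The gap sits exactly where you anticipated it: the claim that $\phi\colon C^\bullet\to[Z]$ is an isomorphism in $\Db(\EE)$ is not justified by the compatibility $\phi\circ(Y\hookrightarrow C^\bullet)=g$ alone. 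By (TR3) there is \emph{some} fill-in $\psi\colon C^\bullet\to Z$ completing $(1_X,1_Y)$ to a morphism of triangles, and any such $\psi$ is an isomorphism; but a map $\phi$ satisfying only the middle-square condition differs from $\psi$ by a morphism factoring through the connecting map, i.e.\ $\phi=\psi+\epsilon\circ\partial$ with $\partial\colon C^\bullet\to\Sigma X$ and $\epsilon\in\Hom_{\Db(\EE)}(\Sigma X,Z)$, and such a perturbation need not be invertible in a general triangulated category. (The same issue occurs, less visibly, when you assert that the inflation $X\inflation Y$ ``is represented by'' the chain map with degree-zero component $f$.)

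The gap is fillable: since $X$ and $Z$ lie in $\EE^{\mathsf{ex}}$, the extension closure of $\EE$ in $\Db(\EE)$, the vanishing $\Hom_{\Db(\EE)}(\Sigma^n E,F)=0$ for $E,F\in\EE$ and $n>0$ from \Cref{Proposition:BasicPropertiesDerivedCategory} propagates through extensions and yields $\Hom_{\Db(\EE)}(\Sigma X,Z)=0$, whence $\epsilon=0$ and $\phi=\psi$ is an isomorphism. You need to state and use this vanishing to close the argument. It is worth noting that the paper's proof sidesteps the issue entirely: it represents $X$ by the two two-term complexes $A\inflation B$ and $Y\deflation Z$, takes an \emph{arbitrary} isomorphism between them in $\Db(\EE)$, writes it as a roof of quasi-isomorphisms, and computes the cone of one leg; no identification of a specific chain map with a specific triangle fill-in is ever required, at the cost of invoking a result letting one replace the apex of the roof by a two-term complex.
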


\begin{proof}
Let $X\stackrel{f}{\inflation} Y\stackrel{g}{\deflation} Z$ be a conflation in $\EE^{\mathsf{ex}}$ with $Y,Z\in \EE$. Viewing $g$ as a map between stalk complexes in $\EE^{\mathsf{ex}}\subseteq \Db(\EE)$, its cone is given by the two-term complex $Y\deflation Z$ with $Y$ in degree $-1$. Thus $X$ is isomorphic (in $\Db(\EE)$) to the two term complex $X^{\bullet}$ given by $Y\deflation Z$ with $Y$ in degree zero. On the other hand, \Cref{Proposition:EveryObjectOfHullIsCokernelOfInflationInEE} yields a conflation $A\inflation B\deflation X$ in $\EE^{\mathsf{ex}}$ with $A,B\in \EE$. It follows that $X$ is also isomorphic to the two-term complex $U^{\bullet}$ given by $A\inflation B$ with $A$ in degree $-1$.

Hence there is an isomorphism $U^{\bullet}\to X^{\bullet}$ in $\Db(\EE)$ which can be represented by a roof $U^{\bullet} \stackrel{\alpha}{\leftarrow}V^{\bullet}\stackrel{\beta}{\rightarrow} X^{\bullet}$ in $\Kb(\EE)$ where both $\alpha$ and $\beta$ are quasi-isomorphisms (here we used axiom \ref{R3+} and \Cref{Proposition:BasicPropertiesDerivedCategory} to see that all isomorphisms are quasi-isomorphisms). By \cite[Proposition~3.18]{HenrardvanRoosmalen19b}, we may assume that $U^{\bullet}$ is a two-term complex $U^{-1}\to U^0$. It follows that the cone of $\beta$ is the acyclic complex 
\[0\to U^{-1}\to U^0\to Y\xrightarrow{g} Z\to 0\]
in $\Kb(\EE)$. It follows that $g$ is a deflation in $\EE$ and thus $g$ admits a kernel $\ker(g)\in \EE$. One readily verifies that $\ker(g)\cong X$ in $\EE^{\mathsf{ex}}$ as $j$ is an exact embedding. 
\end{proof}

\begin{remark}
	We now have two conflation structures on $\EE$: the original conflation structure, and the conflation structure induced from $\ex{\EE}.$  It follows from \Cref{Proposition:BasicPropertiesDerivedCategory}.\eqref{item:R3Triangle}, and the assumption that $\EE$ satisfies axiom \ref{R3+}, that these conflation structures coincide.
\end{remark}

We have now proved the following theorem.

\begin{theorem}
	Let $\EE$ be a deflation-exact category. If $\EE$ satisfies axiom \ref{R3+}, then $\EE$ is a preresolving subcategory of the exact category $\EE^{\mathsf{ex}}$ such that $\resdim_{\EE}(\EE^{\mathsf{ex}})\leq 1$. In particular, the embedding $j\colon \EE\to \EE^{\mathsf{ex}}$ lifts to a triangle equivalence $\D^*(\EE)\to \D^*(\EE^{\mathsf{ex}})$, where $* \in \{-,b,\varnothing\}$.
\end{theorem}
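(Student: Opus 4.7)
The plan is to observe that all substantive work has already been carried out in the preceding results of this section, so the proof amounts to assembling them against the main theorems of Sections~\ref{section:Preliminaries}--\ref{section:Preresolving}.

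First, I would verify axiom \ref{PR2} and the uniform bound on resolution dimension simultaneously by invoking \Cref{Proposition:EveryObjectOfHullIsCokernelOfInflationInEE}: given $Z \in \ex{\EE}$, the conflation $X \inflation Y \deflation Z$ with $X, Y \in \EE$ provides a deflation onto $Z$ from an object of $\EE$, and it is itself a length-one $\EE$-resolution of $Z$ in $\ex{\EE}$. Thus $\resdim_{\EE}(\ex{\EE}) \leq 1$, and in particular axiom \ref{PR2} holds.

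Second, I would invoke the immediately preceding proposition showing that $\EE \subseteq \ex{\EE}$ is deflation-closed, which is precisely axiom \ref{PR1}. Combined with the previous step, this establishes that $\EE$ is a uniformly preresolving subcategory of the exact category $\ex{\EE}$. The remark following that proposition ensures that the deflation-exact structure on $\EE$ inherited from $\ex{\EE}$ via \Cref{Proposition:DeflationClosedInheritsDeflationExactStructure} coincides with the original one on $\EE$, so that $j$ is conflation-exact as a functor between the stated deflation-exact structures (this uses axiom \ref{R3+} together with \Cref{Proposition:BasicPropertiesDerivedCategory}\eqref{item:R3Triangle}).

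Finally, the derived equivalences follow immediately: \Cref{Theorem:PreResolvingDerivedEquivalence} yields the equivalences $\D^{*}(\EE) \to \D^{*}(\ex{\EE})$ for $* \in \{-, b\}$ (the finitely preresolving hypothesis needed for $* = b$ is automatic from the uniform bound), and \Cref{Theorem:UnboundedMainTheorem} handles $* = \varnothing$ since $\EE$ is uniformly preresolving in $\ex{\EE}$. There is no genuine obstacle to overcome; the only mild subtlety worth flagging explicitly is the compatibility of conflation structures mentioned above, which is already resolved by the preceding remark.
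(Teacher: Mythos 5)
Your proof is correct and follows essentially the same route as the paper, which simply assembles \Cref{Proposition:EveryObjectOfHullIsCokernelOfInflationInEE} (giving \ref{PR2} and $\resdim_{\EE}(\ex{\EE})\leq 1$), the preceding proposition on deflation-closedness (giving \ref{PR1}), the remark on the coincidence of conflation structures, and then \Cref{Theorem:PreResolvingDerivedEquivalence} and \Cref{Theorem:UnboundedMainTheorem}. The paper itself introduces the theorem with ``We have now proved the following theorem,'' confirming that no further argument is needed beyond what you have written.
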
  

\begin{remark}
	In general, a deflation-exact category $\EE$ satisfying axiom \ref{R3+} need not be a resolving subcategory of $\EE^{\mathsf{ex}}$ (since then, as an extension-closed subcategory of an exact category, $\EE$ would be exact itself).  If $\EE$ does not satisfy axiom \ref{R3+}, $\EE$ need not be a deflation-closed subcategory of $\EE^{\mathsf{ex}}$ (as is illustrated in \cite[Example 7.6]{HenrardvanRoosmalen20Obs}).
\end{remark}

\section{A comparison to Keller's theorem}\label{Section:KellersCConditions}

For a preresolving subcategory $\AA$ of a deflation-exact category $\EE$, we have used axiom \ref{PR1} to show that the obvious induced conflation structure on $\AA$ is a deflation-exact structure.  In this section, we weaken the conditions required on $\AA$, following \cite[Theorem~12.1]{Keller96}. In this case, as $\AA$ is not deflation-closed in $\EE$, we cannot use \Cref{Proposition:DeflationClosedInheritsDeflationExactStructure}) to endow $\AA$ with a deflation-exact structure.  In \Cref{definition:AAConflations}, we present a different conflation structure on $\AA$ and we show in \Cref{Proposition:GeneralDeflationExactStructure} that this conflation structure endows $\AA$ with the structure of a deflation-exact category.  The derived category of $\AA$ in \Cref{Theorem:DeflationExactKellerTheorem} is taken with respect to this conflation structure. 

We recall the following (dualized) definition from \cite{Keller96}.

\begin{definition}
	Let $\EE$ be an exact category and let $\AA\subseteq \EE$ be a fully exact subcategory.
	\begin{enumerate}[align=left]
		\myitem{\textbf{C1}}\label{C1} For each $E\in \EE$, there exists a deflation $A\deflation E$ with $A\in \AA$.
		\myitem{\textbf{C2}}\label{C2} For each conflation $X\inflation Y \deflation C$ with $C\in \AA$, there exists a commutative diagram 
		\[\xymatrix{
			A\ar@{>->}[r]\ar[d] & B\ar@{->>}[r]\ar[d] & C\ar@{=}[d]\\
			X\ar@{>->}[r] & Y\ar@{->>}[r] & C
		}\] where the top row is a conflation in $\AA$.
	\end{enumerate}
\end{definition}

The following is \cite[Theorem~12.1]{Keller96}.

\begin{theorem}
	Let $\AA\subseteq \EE$ be a fully exact subcategory of an exact category. If $\AA\subseteq \EE$ satisfies axioms \ref{C1} and \ref{C2}, the induced functor $\Dm(\AA)\to \Dm(\EE)$ is a triangle equivalence.
\end{theorem}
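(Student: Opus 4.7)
The plan is to mirror the proof of \Cref{Theorem:PreResolvingDerivedEquivalence}, with axiom \ref{C1} playing the role of \ref{PR2} and axiom \ref{C2} compensating for the absence of deflation-closedness \ref{PR1}. Since $\AA$ is fully exact in $\EE$ (i.e., extension-closed), $\AA$ carries its own exact structure whose conflations are precisely those conflations of $\EE$ with all three terms in $\AA$, and the embedding $\phi\colon \AA\hookrightarrow \EE$ is exact, inducing a triangle functor $\Phi\colon \Dm(\AA)\to \Dm(\EE)$ which I will show is an equivalence.

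Essential surjectivity of $\Phi$ is settled by the direct analogue of \Cref{Lemma:ReplacementTechnique}. Given $E^{\bullet}\in \Cm(\EE)$ with $E^i = 0$ for $i > 0$, pick a deflation $A^0\deflation E^0$ via \ref{C1}, form the pullback $P^{-1}$ of $E^{-1}\to E^0$ along this deflation (using the exactness of $\EE$), apply \ref{C1} again to $P^{-1}$ to obtain a deflation $A^{-1}\deflation P^{-1}$ with $A^{-1}\in \AA$, and iterate. As in the preresolving case, \Cref{Proposition:LocalizationPaperProposition3.7} guarantees that the resulting map $A^{\bullet}\to E^{\bullet}$ is a degreewise quasi-isomorphism in $\SS^-$.

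For full faithfulness, I would apply (the dual of) \cite[Proposition~7.2.1 (ii)]{KashiwaraSchapira06}, as in the proof of \Cref{Theorem:PreResolvingDerivedEquivalence}. In the present setting, the required hypothesis reduces to the following: for every $C^{\bullet}\in \Cm(\AA)\cap \Acm(\EE)$, there exists a map $D^{\bullet}\to C^{\bullet}$ in $\Km(\AA)$ whose cone lies in $\Acm(\AA)$ and with $D^{\bullet}$ itself in $\Acm(\AA)$. This is where axiom \ref{C2} becomes essential. Working inductively from the right, one arranges that $\im(d_D^i)$ lies in $\AA$ for successively smaller $i$ by applying \ref{C2} at each step to the conflation $\im(d_C^{i-1})\inflation C^i\deflation \im(d_C^i)$, where $\im(d_C^i)$ has been inductively ensured to lie in $\AA$. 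The conflation in $\AA$ produced by \ref{C2} supplies the next term of $D^{\bullet}$ together with a kernel in $\AA$, which becomes the right-hand term of the conflation used at the next step of the induction.

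The principal obstacle is the bookkeeping of this inductive construction: at each stage, replacing one term of the complex by an object of $\AA$ forces an adjustment of the differential coming from the next lower degree, and one must verify that the resulting chain maps commute with the comparison $D^\bullet \to C^\bullet$ and preserve the property of being in $\Acm(\AA)$ at the indices already processed. Once this construction is verified, the argument concludes in parallel to the proof of \Cref{Theorem:PreResolvingDerivedEquivalence}: essential surjectivity from the replacement procedure and full faithfulness from the Kashiwara–Schapira criterion combine to yield that $\Phi$ is a triangle equivalence.
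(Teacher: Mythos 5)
Your overall architecture (essential surjectivity via the analogue of \Cref{Lemma:ReplacementTechnique}, full faithfulness via the Kashiwara--Schapira criterion) matches the route the paper takes for its generalization \Cref{Theorem:DeflationExactKellerTheorem}, and the essential surjectivity half is fine: the replacement procedure only uses \ref{C1} and axiom \ref{R2} in $\EE$. The gap is in the full faithfulness half, and it is not mere bookkeeping. Your inductive construction does produce a complex $D^\bullet \in \Acm(\AA)$ with a chain map $g^\bullet\colon D^\bullet \to C^\bullet$ (at each stage one pulls back $C^{i}\deflation \im(d_C^{i})$ along the previously constructed $A^{i}\to \im(d_C^{i})$ to get a conflation $\ker(d_C^{i})\inflation P\deflation A^{i}$ and then applies \ref{C2} to it --- note that after the first step the conflation you feed into \ref{C2} has the pullback $P$, not $C^{i}$, as its middle term, and the objects $\im(d_C^i)$ themselves are fixed by $C^\bullet$ and cannot be ``inductively ensured'' to lie in $\AA$). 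But the cone of $g^\bullet$ is \emph{not} in $\Acm(\AA)$: by \Cref{Lemma:ConeOfAcyclic} and \Cref{Proposition:LocalizationPaperProposition3.7}, $\im(d_{\cone(g)}^{i-1})$ is exactly the pullback of $C^{i}\deflation\im(d_C^{i})$ along $\im(d_D^{i})=A^{i}\to\im(d_C^{i})$, i.e.\ the very object $P$ that \ref{C2} was invoked to avoid, and there is no reason for $P$ to lie in $\AA$. So the hypothesis you want to feed into the Kashiwara--Schapira criterion (that every $C^\bullet\in\Cm(\AA)\cap\Acm(\EE)$ receives a map from $\Acm(\AA)$ with cone in $\Acm(\AA)$, so that $\SS^-\cap\Mor(\Km(\AA))$ is inverted in $\Dm(\AA)$) is not delivered by your construction.

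The paper closes this gap differently. Its \Cref{Corollary:GoodReplacements} shows that any $C^\bullet\in\Cm(\AA)\cap\Acm(\EE)$ is \emph{homotopy equivalent} to a complex in $\Acm(\AA)$: one does not build a new resolution mapping onto $C^\bullet$, but repairs $C^\bullet$ itself by adding contractible summands $B\xrightarrow{1_B}B$, using the consequence of \ref{C2} (recorded as \Cref{Proposition:C2'YieldsAlmostDeflationClosed}) that for a conflation $X\inflation Y\deflation Z$ with $Y,Z\in\AA$ one has $X\oplus B\in\AA$ for some $B\in\AA$. Equivalently (and this is the paper's second proof), one observes that these added summands exhibit the images as objects of the relative weak idempotent completion $\widehat{\AA}_{\EE}$, which \emph{is} deflation-closed and preresolving, so that $\Dm(\AA)\to\Dm(\widehat{\AA}_{\EE})\to\Dm(\EE)$ is a composite of equivalences by \Cref{Corollary:RelativeWeakIdempotentCompletion} and \Cref{Theorem:PreResolvingDerivedEquivalence}. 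I would recommend replacing your inductive cone argument by one of these two devices.
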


\begin{remark}
	\begin{enumerate}
		\item Axiom \ref{C1} is equal to axiom \ref{PR2}.
		\item Keller remarks (see the paragraph above \cite[Theorem~12.1]{Keller96}) that if $\AA\subseteq \EE$ is a fully exact subcategory of an exact category $\EE$ and $\AA\subseteq \EE$ is deflation-closed and satisfies axiom \ref{C1}, then axiom \ref{C2} is implied.
		\item As Keller requires $\AA\subseteq \EE$ to be extension-closed, $\AA$ automatically inherits an exact structure from $\EE$ (in particular, $\D(\AA)$ is defined). 
	\end{enumerate}
\end{remark}

Inspired by the above remarks, we seek to weaken axiom \ref{PR1} in such a way that one obtains a natural deflation-exact structure on $\AA$ and such that we obtain an analogue of \Cref{Proposition:AcyclicComplexesOfDeflationClosedSubcategoryCoincide}.

The following definition and proposition yield a natural deflation-exact structure on $\AA\subseteq \EE$.

\begin{definition}\label{definition:AAConflations}
	Let $\EE$ be a deflation-exact subcategory and let $\AA\subseteq \EE$ be a full additive subcategory. A conflation $A\stackrel{f}{\inflation}B\stackrel{g}{\deflation}C$ in $\EE$ with $A,B,C\in \AA$ is called an \emph{$\AA$-conflation} if $g$ admits all pullbacks in $\AA$, i.e.~for any $h\colon D\to C$ in $\AA$, the pullback in $\EE$ of $h$ along $g$ belongs to $\AA$.
\end{definition}

\begin{proposition}\label{Proposition:GeneralDeflationExactStructure}
	Let $\EE$ be a deflation-exact category and let $\AA\subseteq \EE$ be a full additive subcategory. The $\AA$-conflations define a deflation-exact structure on $\AA$.
\end{proposition}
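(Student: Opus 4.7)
The plan is to verify the axioms \ref{R0}, \ref{R1}, \ref{R2} for the class of $\AA$-conflations directly, relying on the corresponding axioms in $\EE$ plus the characteristic pullback-stability requirement built into \Cref{definition:AAConflations}. Before starting, one observes that every $\AA$-conflation is automatically a kernel-cokernel pair in $\AA$: since $\AA$ is a full subcategory of $\EE$, testing the (co)kernel property on objects of $\AA$ is a restriction of the same property in $\EE$, which holds by assumption. Closure of the class under isomorphism is inherited from the same property of conflations in $\EE$.

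For \ref{R0}, the split sequence $X \stackrel{1}{\inflation} X \deflation 0$ is a conflation in $\EE$ with all terms in $\AA$, and the pullback of $X \to 0$ along any $D \to 0$ in $\AA$ is $X \oplus D \in \AA$. For \ref{R1}, take $\AA$-conflations $K_1 \inflation A \stackrel{f}{\deflation} B$ and $K_2 \inflation B \stackrel{g}{\deflation} C$. Then $gf$ is a deflation in $\EE$ by \ref{R1} for $\EE$. Its kernel is (canonically identified with) the pullback of $f$ along $K_2 \inflation B$ in $\EE$; because $f$ is an $\AA$-deflation and $K_2 \in \AA$, this pullback lies in $\AA$, so $\ker(gf) \in \AA$ and the ambient kernel-cokernel pair is a conflation in $\EE$ between objects of $\AA$. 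Finally, for any $h\colon D \to C$ in $\AA$, the pullback of $gf$ along $h$ factors as two successive pullbacks (first $g$ along $h$, then $f$ along the resulting map into $B$), each of which stays in $\AA$ by the $\AA$-conflation hypothesis on $f$ and $g$; hence the composite also admits $\AA$-pullbacks.

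For \ref{R2}, let $A \inflation B \stackrel{g}{\deflation} C$ be an $\AA$-conflation and let $h\colon D \to C$ be a morphism in $\AA$. Form the pullback $P$ of $g$ along $h$ in $\EE$; axiom \ref{R2} for $\EE$ gives that $P \deflation D$ is a deflation with kernel $A \in \AA$. The very definition of an $\AA$-conflation forces $P \in \AA$, so $A \inflation P \deflation D$ is a conflation in $\EE$ with all three terms in $\AA$. To see this new deflation is itself an $\AA$-deflation, note that for any $k\colon E \to D$ in $\AA$, the pullback of $P \deflation D$ along $k$ agrees with the pullback of $g$ along $hk$ by the pasting lemma for pullbacks; since $g$ is an $\AA$-deflation, this object lies in $\AA$.

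There is no substantive obstacle in this proof: the definition of an $\AA$-conflation is engineered precisely so that the pullback axiom \ref{R2} and the closedness required for \ref{R1} are immediate via pullback pasting. The only mildly delicate point is the verification that the kernel of a composition of $\AA$-deflations lies in $\AA$, which one handles by recognizing $\ker(gf)$ as a pullback of $f$ along the inclusion of $\ker g$, thereby reducing it to the same $\AA$-pullback property.
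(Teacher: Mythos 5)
Your proof is correct and follows essentially the same route as the paper's: axiom \ref{R0} via split sequences, axiom \ref{R2} directly from the definition plus pullback pasting, and axiom \ref{R1} by identifying the kernel of the composite with the pullback of the first deflation along the kernel of the second (the paper's object $P$) and then pasting pullbacks. The extra details you supply (the kernel-cokernel property in the full subcategory, the explicit $X \oplus D$ for \ref{R0}) are accurate but were left implicit in the paper.
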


\begin{proof}
	Axiom \ref{R0} is clearly satisfied. Axiom \ref{R2} holds by definition of an $\AA$-conflation and the Pullback Lemma. It remains to show axiom \ref{R1}, to that end, consider two $\AA$-conflations $K\inflation B\deflation C$ and $L\inflation A\deflation B$. Consider the following commutative diagram in $\EE$:
	\[\xymatrix{
		L\ar@{=}[r]\ar@{>->}[d] & L\ar@{>->}[d] &\\
		P\ar@{>->}[r]\ar@{->>}[d] & A\ar@{->>}[r]\ar@{->>}[d] & C\ar@{=}[d]\\
		K\ar@{>->}[r] & B\ar@{->>}[r] & C\\
	}\] Here the lower-left square is a pullback square in $\EE$ (which exists by axiom \ref{R2}). As $A\deflation B$ is an $\AA$-deflation, the pullback $P$ belongs to $\AA$. The Pullback Lemma now shows that $P\inflation A\deflation C$ is an $\AA$-conflation.
\end{proof}

\begin{remark}
The conflation structure from \Cref{definition:AAConflations} is the largest conflation structure on $\AA$ such that the embedding $\AA \to \EE$ is conflation-exact.  Indeed, the class of $\AA$-conflations is the largest subclass for which axiom \ref{R2} is satisfied.
\end{remark}

We now modify axiom \ref{C2} accordingly.

\begin{definition}
	Let $\EE$ be a deflation-exact category and let $\AA\subseteq \EE$ be a full additive subcategory. We define the following axiom:
	\begin{enumerate}[align=left]
		\myitem{\textbf{C2}'}\label{C2'} For each conflation $X\inflation Y \deflation C$ with $C\in \AA$, there exists a commutative diagram 
		\[\xymatrix{
			A\ar@{>->}[r]\ar[d] & B\ar@{->>}[r]\ar[d] & C\ar@{=}[d]\\
			X\ar@{>->}[r] & Y\ar@{->>}[r] & C
		}\] where the top row is an $\AA$-conflation.
	\end{enumerate}
\end{definition}

The following proposition clarifies the difference between an $\AA$-conflation and an $\EE$-conflation with terms in $\AA$.

\begin{proposition}\label{Proposition:C2'YieldsAlmostDeflationClosed}
	Let $\AA$ be a full additive subcategory of a deflation-exact category $\EE$ such that axiom \ref{C2'} holds. Let $X\stackrel{i}{\inflation} Y\stackrel{p}{\deflation} Z$ be a conflation in $\EE$.
	\begin{enumerate}
		\item If $Y,Z\in \AA$, then there exists a $B\in \AA$ such that $X\oplus B\in \AA$.
		\item If $X,Y$ and $Z\in \AA$, then there exists an $B\in \AA$ such that 
		\[\xymatrix{X\oplus B\ar@{>->}[r]^-{\begin{psmallmatrix}i&0\\ 0&1_{B}\end{psmallmatrix}} & Y \oplus B\ar@{->>}[r]^-{\begin{psmallmatrix}p&0\end{psmallmatrix}} & Z }\] is an $\AA$-conflation.
	\end{enumerate}
\end{proposition}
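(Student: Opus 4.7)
The plan is to handle both parts by invoking axiom \ref{C2'} to produce an $\AA$-conflation mapping to the given conflation, and then exploiting the resulting section data via the bicartesian-pullback characterization of Proposition~\ref{Proposition:LocalizationPaperProposition3.7}.

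For part (1), I would apply axiom \ref{C2'} to $X\stackrel{i}{\inflation} Y\stackrel{p}{\deflation} Z$ to obtain an $\AA$-conflation $A\inflation B'\stackrel{q}{\deflation} Z$ together with a morphism of conflations whose middle component is some $g\colon B'\to Y$ satisfying $pg=q$. Since $q$ is an $\AA$-deflation and $p\colon Y\to Z$ is a morphism in $\AA$ (as $Y,Z\in \AA$), the pullback $P := Y\times_Z B'$ exists by axiom \ref{R2} and lies in $\AA$. The pair $(g,1_{B'})$ together with $pg=q$ induces, via the universal property of the pullback, a section $s\colon B'\to P$ of the deflation $P\deflation B'$ (whose kernel is $X$); the conflation $X\inflation P\deflation B'$ therefore splits, giving $P\cong X\oplus B'\in \AA$. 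Thus $B=B'$ works.

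For part (2), I would keep $B=B'$. Applying Proposition~\ref{Proposition:LocalizationPaperProposition3.7} to the bicartesian pullback square defining $P$ yields a conflation $P\inflation Y\oplus B'\deflation Z$ whose deflation is the sum map $(p,q)\colon (y,b)\mapsto p(y)+q(b)$. Using the isomorphism $P\cong X\oplus B'$ from part (1) together with the $\EE$-automorphism $\phi=\begin{psmallmatrix}1 & g\\ 0 & -1\end{psmallmatrix}$ of $Y\oplus B'$, this conflation becomes isomorphic to precisely the conflation in the statement (with maps $\begin{psmallmatrix}i & 0\\ 0 & 1_{B'}\end{psmallmatrix}$ and $(p,0)$). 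All three terms lie in $\AA$, so it only remains to verify that the deflation $(p,q)$ admits every pullback in $\AA$.

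To carry this out, for $h\colon D\to Z$ in $\AA$ I would pull the entire diagram from axiom \ref{C2'} back along $h$. The top row $A\inflation D\times_Z B'\deflation D$ is itself an $\AA$-conflation---since the pullback of an $\AA$-deflation along an $\AA$-morphism is again an $\AA$-deflation by the Pullback Lemma---and gives the data required by axiom \ref{C2'} for the pullback conflation $X\inflation D\times_Z Y\deflation D$. Applying the construction of part (1) to this pulled-back data yields the isomorphism $(D\times_Z Y)\times_D(D\times_Z B')\cong X\oplus(D\times_Z B')\in \AA$, together with the Proposition~\ref{Proposition:LocalizationPaperProposition3.7} conflation
\[X\oplus(D\times_Z B')\inflation (D\times_Z Y)\oplus(D\times_Z B')\deflation D.\]
A direct computation with the retraction $(d,y,b)\mapsto -b$ identifies the pullback of $(p,q)$ along $h$ with $(D\times_Z Y)\oplus B'$, and the main obstacle is then to reconcile the $B'$-summand appearing in this pullback with the $(D\times_Z B')$-summand produced by the pulled-back construction. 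I expect this to be handled by carefully bookkeeping the section $g$ and its pullback together with the automorphism $\phi$ at both the original and pulled-back levels, thereby transferring the splitting argument of part (1) to the pullback situation uniformly in $h$.
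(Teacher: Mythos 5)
Your part (1) is correct and is essentially the paper's own argument: apply axiom \ref{C2'} to obtain an $\AA$-conflation $A\inflation B'\stackrel{q}{\deflation}Z$ over the given conflation, pull the $\AA$-deflation $q$ back along $p\colon Y\to Z$ (a morphism of $\AA$ since $Y,Z\in\AA$) to get $P\in\AA$, and observe that $g$ induces a section of $P\deflation B'$, whence $P\cong X\oplus B'\in\AA$.

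Part (2), however, has a genuine gap at exactly the point you flag. After reducing to showing that $\begin{psmallmatrix}p&q\end{psmallmatrix}\colon Y\oplus B'\deflation Z$ admits all pullbacks in $\AA$, your verification lands on the wrong object. For $h\colon D\to Z$ in $\AA$, the pullback you must control is $W\cong(D\times_Z Y)\oplus B'$, whereas the object your pulled-back construction puts in $\AA$ is $X\oplus(D\times_Z B')$ --- and even that membership does not follow from your argument as written: applying the construction of part (1) to the pulled-back data would require $D\times_Z Y\to D$ to be a morphism \emph{in} $\AA$, i.e.\ $D\times_Z Y\in\AA$, which is not available (the object lies in $\AA$ only for the trivial reason that $X\in\AA$ by hypothesis and $D\times_Z B'\in\AA$). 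The two objects $X\oplus(D\times_Z B')$ and $(D\times_Z Y)\oplus B'$ are genuinely different, and no bookkeeping of $g$ and $\phi$ identifies them; nothing in your proposal places $(D\times_Z Y)\oplus B'$ in $\AA$. The paper sidesteps this entirely: since the $\AA$-conflations form a deflation-exact structure on $\AA$ (\Cref{Proposition:GeneralDeflationExactStructure}), one applies \Cref{Proposition:LocalizationPaperProposition3.7} \emph{inside} $\AA$ to the pullback square of $q$ along $p$ --- all four objects lie in $\AA$ and the relevant pair of parallel arrows, $q$ and its pullback $P\deflation Y$, are $\AA$-deflations --- which directly certifies $X\oplus B'\inflation Y\oplus B'\deflation Z$ (in its skewed form) as an $\AA$-conflation, leaving no pullback condition to check by hand. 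If you wish to salvage your hands-on route, the fix is short: the pullback of $\begin{psmallmatrix}p&q\end{psmallmatrix}$ along $h$ is canonically isomorphic to the pullback of the single $\AA$-deflation $q$ along the morphism $\begin{psmallmatrix}h&-p\end{psmallmatrix}\colon D\oplus Y\to Z$, which is a morphism of $\AA$; membership in $\AA$ is then immediate from \Cref{definition:AAConflations}.
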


\begin{proof}
Assume that $Z \in \AA.$  By axiom \ref{C2'}, we obtain an $\AA$-conflation $A\inflation B\deflation Z$ that maps to $(i,p)$ via maps $f\colon A\to X$ and $g\colon B\to Y$. By axiom \ref{R2}, we obtain the following pullback square:
		\[\xymatrix{
			& A\ar@{=}[r]\ar@{>->}[d] & A\ar@{>->}[d]\\
			X\ar@{>->}[r]\ar@{=}[d] & X\oplus B\ar@{->>}[r]^-{\begin{psmallmatrix}0&1_B\end{psmallmatrix}}\ar[d]_{\begin{psmallmatrix}i&g\end{psmallmatrix}}\ar@{->>}[d] & B\ar@{->>}[d]\ar@{-->}[ld]^g\\
			X\ar@{>->}[r] & Y\ar@{->>}[r] & Z
		}\] 
			\begin{enumerate}
				\item	If $Y\in \AA$, then using that $A \inflation B \deflation Z$ is an $\AA$-conflation, we find that $B\oplus X\in \AA$.  
				\item Assume that $X,Y\in \AA$. As $A\inflation B\deflation Z$ is an $\AA$-conflation and the $\AA$-conflations form a deflation-exact structure (\Cref{{Proposition:GeneralDeflationExactStructure}}), we can use \Cref{Proposition:LocalizationPaperProposition3.7} to obtain the $\AA$-conflation $$\xymatrix{X\oplus B\ar@{>->}[r]^-{\begin{psmallmatrix}i&g\\0&-1_B\end{psmallmatrix}} &  Y\oplus B\ar@{->>}[r]^-{\begin{psmallmatrix}p & pg\end{psmallmatrix}} & Z}.$$ The following diagram show that this $\AA$-conflation is isomorphic to the desired $\AA$-conflation:
				\[\xymatrix{
					X\oplus B\ar@{>->}[r]^-{\begin{psmallmatrix}i&g\\0&-1_B\end{psmallmatrix}}\ar[d]_{\begin{psmallmatrix}1_X&0\\0&-1_B\end{psmallmatrix}} &  Y\oplus B\ar@{->>}[r]^-{\begin{psmallmatrix}p & pg\end{psmallmatrix}}\ar[d]^{\begin{psmallmatrix}1_Y&g\\0&1_B\end{psmallmatrix}} & Z\ar@{=}[d]\\
					X\oplus B\ar@{>->}[r]_-{\begin{psmallmatrix}i&0\\0&1_B\end{psmallmatrix}} &  Y\oplus B\ar@{->>}[r]_-{\begin{psmallmatrix}p & 0\end{psmallmatrix}} & Z
				}\] This concludes the proof.\qedhere
			\end{enumerate}
\end{proof}

\begin{corollary}\label{Corollary:GoodReplacements}
	Let $\AA$ be a full additive subcategory of a deflation-exact category $\EE$ satisfying axiom \ref{C2'}.
	\begin{enumerate}
		\item If $E^{\bullet}\in \Acm(\EE)\cap \Cm(\AA)$, then $E^{\bullet}$ is homotopic to a complex $A^{\bullet}\in \Acm(\AA)$.
		\item If $E^{\bullet}\in \Acb(\EE)\cap \Cb(\AA)$, then $E^{\bullet}$ is homotopic to a complex $A^{\bullet}\in \Acb(\AA)$.
	\end{enumerate}
\end{corollary}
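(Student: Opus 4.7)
My plan is to modify $E^{\bullet}$ by repeatedly adding null-homotopic complexes at pairs of adjacent degrees, producing a complex homotopic to $E^{\bullet}$ that lies in $\Acm(\AA)$ (respectively $\Acb(\AA)$). I treat the right-bounded case in detail; the bounded case proceeds identically and terminates after finitely many steps.

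Without loss of generality $E^i = 0$ for $i > 0$. Writing $K^i \coloneqq \im d_E^i = \ker d_E^{i+1}$, the acyclicity of $E^{\bullet}$ in $\EE$ gives conflations $K^{i-1} \inflation E^i \deflation K^i$ in $\EE$, with $K^{-1} = E^0 \in \AA$. I construct inductively a sequence of complexes $E^{\bullet} = E_0^{\bullet}, E_1^{\bullet}, \ldots$ in $\Cm(\AA)$, each obtained from the previous by adding null-homotopic complexes of the form $0 \to B \xrightarrow{1} B \to 0$ (hence each homotopic to $E^{\bullet}$), satisfying the following hypothesis after step $n$: in $E_n^{\bullet}$, the images at degrees $-1, \ldots, -n-1$ all lie in $\AA$, and the conflations at degrees $0, -1, \ldots, -n$ are $\AA$-conflations.

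For the inductive step from $E_n^{\bullet}$ to $E_{n+1}^{\bullet}$, I consider the conflation $K_{(n)}^{-n-2} \inflation E_n^{-n-1} \deflation K_{(n)}^{-n-1}$ (subscripts referring to $E_n^{\bullet}$), whose middle and right-hand terms lie in $\AA$ by induction. \Cref{Proposition:C2'YieldsAlmostDeflationClosed}(1) supplies $B_1 \in \AA$ with $K_{(n)}^{-n-2} \oplus B_1 \in \AA$, and I add the null-homotopic complex $0 \to B_1 \xrightarrow{1} B_1 \to 0$ concentrated in degrees $-n-2, -n-1$. The resulting conflation at degree $-n-1$ now has all three terms in $\AA$, so \Cref{Proposition:C2'YieldsAlmostDeflationClosed}(2) yields $B_2 \in \AA$ making the further-augmented conflation an $\AA$-conflation; I then add the corresponding null-homotopic complex $0 \to B_2 \xrightarrow{1} B_2 \to 0$ in the same pair of degrees. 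A direct block-matrix computation confirms that (i) the complex remains acyclic in $\EE$ with the image at degree $-n-2$ now equal to $K_{(n)}^{-n-2} \oplus B_1 \oplus B_2 \in \AA$, (ii) the conflations already established at degrees $\geq -n$ are unchanged, since the relevant terms at those degrees are not touched in this step, and (iii) the new conflation at degree $-n-1$ is precisely the $\AA$-conflation produced by \Cref{Proposition:C2'YieldsAlmostDeflationClosed}(2).

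Each step alters only two consecutive degrees, so every fixed degree is modified at most finitely many times; the construction stabilizes pointwise to a well-defined complex $A^{\bullet} \in \Acm(\AA)$ homotopic to $E^{\bullet}$. In the bounded case, once we reach the bottom of the complex (where $K^{-N-1} = 0$ is trivially in $\AA$ and the corresponding deflation is an isomorphism, hence trivially an $\AA$-conflation), the procedure halts and yields $A^{\bullet} \in \Acb(\AA)$. The main delicate point is the bookkeeping at (ii): verifying that adding null-homotopies at degrees $-n-2, -n-1$ does not disturb the $\AA$-conflation property established at higher degrees. This ultimately reduces to the fact that the class of $\AA$-conflations is closed under direct sum with split conflations, which is immediate from the deflation-exact structure furnished by \Cref{Proposition:GeneralDeflationExactStructure}.
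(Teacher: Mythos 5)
Your argument is correct and follows essentially the same route as the paper's (very terse) proof: an inductive, degree-by-degree correction in the style of \Cref{Proposition:AcyclicComplexesOfDeflationClosedSubcategoryCoincide}, using \Cref{Proposition:C2'YieldsAlmostDeflationClosed} to add contractible summands $0 \to B \xrightarrow{1} B \to 0$ so that each image lands in $\AA$ and each conflation becomes an $\AA$-conflation. Your bookkeeping (each degree is modified only finitely often, and previously established $\AA$-conflations are untouched by later steps) supplies exactly the details the paper leaves implicit.
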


\begin{proof}
	 The is similar to \Cref{Proposition:AcyclicComplexesOfDeflationClosedSubcategoryCoincide} and uses \Cref{Proposition:C2'YieldsAlmostDeflationClosed} to add in a controlled manner objects to each $\im(d_E^{k})$ so that the resulting complex is the desired complex.
\end{proof}

	Let $\EE$ be a deflation-exact category and let $\AA\subseteq \EE$ be a full additive subcategory. The \emph{relative weak idempotent completion} $\widehat{\AA}_{\EE}$ is the full subcategory of $\EE$ generated by all kernels in $\EE$ of retractions $r\colon A\to B$ in $\EE$ with $A,B\in \AA$.
	
	If $\AA \subseteq \EE$ is an additive subcategory satisfying axiom \ref{C1}, then $\widehat{\AA}_{\EE} \subseteq \EE$ also satisfies axiom \ref{C1}.

\begin{corollary}\label{Corollary:RelativeWeakIdempotentCompletion}
	Let $\AA$ be a full additive subcategory of a deflation-exact category $\EE$ satisfying axiom \ref{C2'}.
		\begin{enumerate}
			\item The subcategory $\widehat{\AA}_{\EE}\subseteq \EE$ is deflation-closed and the embedding $\AA\to \widehat{\AA}_{\EE}$ lifts to a triangle equivalence $\Dm(\AA)\to \Dm(\widehat{\AA}_{\EE})$.
			\item Assume that $\AA \subseteq \EE$ satisfies axiom \ref{C1}.  If $\resdim_{\AA}(E)=n$ for $E\in \EE$, then $\resdim_{\widehat{\AA}_{\EE}}(E)\leq n$. 
		\end{enumerate}
\end{corollary}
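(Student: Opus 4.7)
The plan for part (1) is to first establish that $\widehat{\AA}_{\EE}$ lies deflation-closed in $\EE$, and then deduce the derived equivalence $\Dm(\AA)\to\Dm(\widehat{\AA}_{\EE})$ by an argument in the spirit of \Cref{Theorem:PreResolvingDerivedEquivalence}, with \Cref{Corollary:GoodReplacements} compensating for the fact that $\AA$ need not be deflation-closed in $\widehat{\AA}_{\EE}$. Part (2) is then essentially formal from $\AA\subseteq \widehat{\AA}_{\EE}$.

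For deflation-closedness, suppose $X \inflation Y \deflation Z$ is a conflation in $\EE$ with $Y,Z\in\widehat{\AA}_{\EE}$. By definition of $\widehat{\AA}_{\EE}$, choose $Y',Z'\in\AA$ with $Y\oplus Y'\in \AA$ and $Z\oplus Z'\in \AA$, and take direct sums of the given conflation with the split conflations $0 \inflation Y' \deflation Y'$ and $0 \inflation Z' \deflation Z'$ to obtain the conflation
\[ X \inflation Y\oplus Y'\oplus Z' \deflation Z\oplus Y'\oplus Z' \]
in $\EE$ whose middle and right-hand terms both lie in $\AA$. \Cref{Proposition:C2'YieldsAlmostDeflationClosed}(1) then yields $B\in \AA$ with $X\oplus B\in \AA$, which means precisely that $X\in \widehat{\AA}_{\EE}$.

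With this in hand, $\widehat{\AA}_{\EE}$ inherits a deflation-exact structure from $\EE$ by \Cref{Proposition:DeflationClosedInheritsDeflationExactStructure}, and the embedding $\AA \hookrightarrow \widehat{\AA}_{\EE}$ is conflation-exact (every $\AA$-conflation is a conflation in $\EE$ with terms in $\widehat{\AA}_{\EE}$), hence lifts to a triangle functor $\Phi\colon \Dm(\AA)\to\Dm(\widehat{\AA}_{\EE})$. For any $X\in\widehat{\AA}_{\EE}$, picking $X'\in\AA$ with $X\oplus X'\in\AA$ provides a split deflation $X\oplus X'\deflation X$, showing that $\AA\subseteq \widehat{\AA}_{\EE}$ satisfies axiom \ref{PR2}; then \Cref{Lemma:ReplacementTechnique} gives essential surjectivity of $\Phi$. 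For full faithfulness, I would invoke the dual of \cite[Proposition~7.2.1(ii)]{KashiwaraSchapira06} exactly as in the proof of \Cref{Theorem:PreResolvingDerivedEquivalence}. The only nontrivial hypothesis to verify is that any $A^\bullet\in\Cm(\AA)$ acyclic in $\widehat{\AA}_{\EE}$ be isomorphic in $\Km(\AA)$ to some complex in $\Acm(\AA)$. By \Cref{Proposition:AcyclicComplexesOfDeflationClosedSubcategoryCoincide} applied to the deflation-closed embedding $\widehat{\AA}_{\EE}\subseteq\EE$, such a complex already lies in $\Acm(\EE)\cap\Cm(\AA)$, and \Cref{Corollary:GoodReplacements}(1) then supplies the required complex in $\Acm(\AA)$.

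For part (2), axiom \ref{C1} for $\AA\subseteq\EE$ transfers to $\widehat{\AA}_{\EE}\subseteq \EE$ (as noted just before the corollary), so $\resdim_{\widehat{\AA}_{\EE}}$ is well-defined; and any length-$n$ resolution of $E$ by objects of $\AA$ is, via $\AA\subseteq\widehat{\AA}_{\EE}$, already a length-$n$ $\widehat{\AA}_{\EE}$-resolution. The main technical hurdle is the deflation-closedness step in part (1) — specifically, the direct-summation trick that puts the ambient conflation into a form where \Cref{Proposition:C2'YieldsAlmostDeflationClosed}(1) applies; once that is in place, everything else is bookkeeping on top of the machinery developed in \S\ref{section:Preliminaries}--\S\ref{Section:KellersCConditions}.
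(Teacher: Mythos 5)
Your proposal is correct and follows the route the paper intends: the paper's own proof is just the one-line citation ``This follows from \Cref{Proposition:C2'YieldsAlmostDeflationClosed},'' and your argument fleshes out exactly that --- using part (1) of that proposition (after padding the given conflation by split summands so that its middle and right terms land in $\AA$) to get deflation-closedness of $\widehat{\AA}_{\EE}$, and then the already-established machinery (\Cref{Proposition:DeflationClosedInheritsDeflationExactStructure}, \Cref{Lemma:ReplacementTechnique}, \Cref{Proposition:AcyclicComplexesOfDeflationClosedSubcategoryCoincide}, \Cref{Corollary:GoodReplacements}) for the derived equivalence, with part (2) being formal. No gaps; you have simply supplied the details the paper leaves implicit.
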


\begin{proof}
	This follows from \Cref{Proposition:C2'YieldsAlmostDeflationClosed}.
\end{proof}

\begin{theorem}\label{Theorem:DeflationExactKellerTheorem}
	Let $\EE$ be a deflation-exact category and let $\AA\subseteq \EE$ be a full additive subcategory. If $\AA\subseteq \EE$ satisfies axioms \ref{C1} and \ref{C2'}, then the induced functor $\Dm(\AA)\to \Dm(\EE)$ is a triangle equivalence. Moreover, if $\resdim_{\AA}(E)<\infty$ for all $E\in \EE$, the functor $\Db(\AA)\to \Db(\EE)$ is essentially surjective.
\end{theorem}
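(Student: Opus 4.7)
The plan is to reduce the theorem to \Cref{Theorem:PreResolvingDerivedEquivalence} by inserting the relative weak idempotent completion $\widehat{\AA}_{\EE}$ between $\AA$ and $\EE$. First I would verify that $\widehat{\AA}_{\EE} \subseteq \EE$ is a preresolving subcategory: axiom \ref{PR1} is the deflation-closedness supplied by \Cref{Corollary:RelativeWeakIdempotentCompletion}(1), and axiom \ref{PR2} follows from axiom \ref{C1} for $\AA$ together with the inclusion $\AA \subseteq \widehat{\AA}_{\EE}$. With this in hand, \Cref{Theorem:PreResolvingDerivedEquivalence}(1) gives a triangle equivalence $\Dm(\widehat{\AA}_{\EE}) \to \Dm(\EE)$, while \Cref{Corollary:RelativeWeakIdempotentCompletion}(1) provides a second triangle equivalence $\Dm(\AA) \to \Dm(\widehat{\AA}_{\EE})$; composing these yields the first statement.

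For the bounded part, I would observe that axiom \ref{C1} is literally axiom \ref{PR2}, so the hypotheses of \Cref{Lemma:ReplacementTechnique}(2) are met as soon as $\resdim_{\AA}(E)<\infty$ for every $E\in\EE$. That lemma yields, for each $E^{\bullet}\in \Cb(\EE)$, a bounded complex $A^{\bullet}\in\Cb(\AA)$ together with a quasi-isomorphism $A^{\bullet}\to E^{\bullet}$ in $\Cb(\EE)$, which immediately delivers essential surjectivity of $\Db(\AA)\to\Db(\EE)$.

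I do not anticipate a serious obstacle: the heavy lifting is already done in \Cref{Corollary:RelativeWeakIdempotentCompletion} (which uses \Cref{Proposition:C2'YieldsAlmostDeflationClosed} to compensate for the fact that $\AA$ need not be deflation-closed in $\EE$) and in \Cref{Lemma:ReplacementTechnique}. The one point requiring care is the bookkeeping of conflation structures on $\AA$: the derived category $\Dm(\AA)$ refers to the $\AA$-conflation structure of \Cref{definition:AAConflations}, and one should confirm that the equivalence supplied by \Cref{Corollary:RelativeWeakIdempotentCompletion}(1) is genuinely compatible with this structure, rather than with the a~priori distinct structure that $\AA$ inherits as a deflation-closed subcategory of $\widehat{\AA}_{\EE}$.
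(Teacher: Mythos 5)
Your proposal is correct and coincides with the paper's own argument: the proof of \Cref{Theorem:DeflationExactKellerTheorem} factors $\Dm(\AA)\to \Dm(\widehat{\AA}_{\EE})\to \Dm(\EE)$ exactly as you describe, invoking \Cref{Corollary:RelativeWeakIdempotentCompletion} for the first equivalence and \Cref{Theorem:PreResolvingDerivedEquivalence} for the second, with \Cref{Lemma:ReplacementTechnique} handling the bounded essential surjectivity. (The paper also records a more direct route via \Cref{Corollary:GoodReplacements}, but the completion argument is presented as an equally valid alternative, and your caution about which conflation structure on $\AA$ is in play is precisely the point \Cref{Corollary:RelativeWeakIdempotentCompletion} is designed to settle.)
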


\begin{proof}
	This follows directly from  \Cref{Lemma:ReplacementTechnique} and \Cref{Corollary:GoodReplacements}. Alternatively, the functor $\Dm(\AA)\to \Dm(\EE)$ factors as $\Dm(\AA)\to \Dm(\widehat{\AA}_{\EE})\to \Dm(\EE)$. The first map is a triangle equivalence by \Cref{Corollary:RelativeWeakIdempotentCompletion} and the second map is a triangle equivalence by \Cref{Theorem:PreResolvingDerivedEquivalence}.
\end{proof}

\providecommand{\bysame}{\leavevmode\hbox to3em{\hrulefill}\thinspace}
\providecommand{\MR}{\relax\ifhmode\unskip\space\fi MR }
% \MRhref is called by the amsart/book/proc definition of \MR.
\providecommand{\MRhref}[2]{%
  \href{http://www.ams.org/mathscinet-getitem?mr=#1}{#2}
}
\providecommand{\href}[2]{#2}

\end{document}